\newcommand{\BLACK}{\color{black}}
\definecolor{dGREEN}{rgb}{0.0,0.5,0.5}
\newcommand{\glalign}[2]{\lower.6ex\vbox{
\baselineskip\lineskip\ialign{$#1\hfil##\hfil$\crcr#2\crcr=\crcr}}}
\newcommand{\del}{\partial}
\newcommand{\Om}{\Omega}
\newcommand{\dlambda}{\,{\rm d}\lambda}
\newcommand{\dtau}{\,{\rm d}\tau}
\newcommand{\dx}{\,{\rm d}x}
\newcommand{\dy}{\,{\rm d}y}
\renewcommand{\div}{\mbox{\rm div}\,}
\newcommand{\supp}{\mbox{\rm supp}\,}
\newcommand{\IR}{\mathbb{R}}
\newcommand{\IC}{\mathbb{C}}
\newcommand{\IN}{\mathbb{N}}
\newcommand{\loc}{\mathrm{loc}}
\newcommand{\D}{\mathcal{D}}
\def\eqn#1$$#2$${\begin{equation}\label#1#2\end{equation}}
\numberwithin{equation}{section}
\newtheorem{defi}{Definition}[section]
\newtheorem{thm}[defi]{Theorem}
\newtheorem{prop}[defi]{Proposition}
\newtheorem{lem}[defi]{Lemma}
\newtheorem{rem}[defi]{Remark}
\def\eqn#1$$#2$${\begin{equation}\label#1#2\end{equation}}
\numberwithin{equation}{section}
\numberwithin{equation}{section}
\begin{document}

\title{
\bf \large
The time periodic problem for the Navier-Stokes equations\\ in exterior domains in weighted spaces\BLACK}
\author{{\normalsize
Reinhard Farwig\footnote{
Fachbereich Mathematik, Technische Universit\"at Darmstadt, Schlossgartenstr. 7, 64289 Darmstadt, \quad Germany, \texttt{farwig@mathematik.tu-darmstadt.de}} \;
and \;
Kazuyuki Tsuda\footnote{
Kyushu Sangyo University, 3-1 Matsukadai 2-chome,
Higashi-ku, Fukuoka,
813-8503 Japan, \texttt{k-tsuda@ip.}\texttt{kyusan-u.ac.jp}}   }\\[2ex]
}
\date{}
\maketitle

\begin{abstract}
\noindent
The paper considers the time periodic problem of the Navier-Stokes system in an exterior domain under time periodic external forces.  
Existence of periodic mild solutions is obtained in the critical scale invariant space $C(\IR;L^n)$ $(n \geq 4)$  if the external force is small without exploiting any divergence form. 
Previous studies mainly rely on either potential theoretical estimates or time-space integral estimates in Lorentz spaces introduced by Yamazaki (Math. Ann. {317}  (2000)).  
In this article, a new method based on radially symmetric Muckenhoupt weights in space is used.  
To apply these weights, we reconsider weighted $L^p$-$L^q$ decay estimates for the Stokes semigroup introduced  by Kobayashi and Kubo (2012-2015).

\end{abstract}

\noindent {\bf Key Words and Phrases.} Navier-Stokes equations; Muckenhoupt weights; two-weight $L^p$-$L^q$ Stokes semigroup estimates; local decay estimates;
mild time periodic solutions
\\

\noindent {\bf 2010 Mathematics Subject Classification Numbers.} 35Q30; 35B10; 76D05\\[1ex]

\section{Introduction}
We consider the time periodic problem of the Navier-Stokes equations in an exterior domain. 
Let $n \geq 3$ and $\Omega \subset\mathbb{R}^n$ be an exterior domain with $C^3$ boundary. 
The Navier-Stokes equations on $\Omega$ are described by  
\begin{align}\label{equ:ns}
	\begin{aligned}
		v_t - \nu\Delta v + v\cdot \nabla v + \nabla p & = f \quad\! \text{in }\;\, \Omega,\\
		 \div v & = 0\quad   \text{in }\;\, \Omega,\\
		v & = 0  \quad  \text{on }\,  \partial \Omega.
	\end{aligned}
\end{align}
Here $v$ denotes the unknown velocity of an incompressible, viscous  fluid and $p$ denotes the pressure, respectively, at time $t\in \IR$ and position $x\in\Omega$. Moreover, let $f=f(x,t)$ be a given time periodic external force with period $T$, {\em i.e.}  $f(x,t+T)=f(x,t)$. For simplicity, the coefficient of viscosity $\nu$ equals $1$.


Time periodic problems constitute one of the fundamental phenomena for incompressible fluid flow and have been studied by many researchers. Especially, the exterior domain case is on the one hand important for applications in engineering and on the other hand mathematically challenging because of the restricted choice of adequate function spaces and their embedding properties \BLACK as well as the non-invertibility of the relevant Stokes operator.  

As the oldest result, Serrin \cite{Serrin} describes a time periodic solution on  bounded domains which may even depend on time. 
Kaniel and Shinbrot \cite{Kaniel-Shinbrot}  
show for a small time periodic external force the existence of a strong time periodic solution. 
The case $\IR^3$ was considered by Maremonti \cite{Marem91}. Then Kozono and Nakao \cite{Kozono-Nakao} prove existence of a unique  strong time periodic solution for a given small periodic external force on $\mathbb{R}^n$, $n\geq3$, and exterior domains when $n \geq 4$. The exterior domain case in $\mathbb{R}^3$ is solved by Yamazaki \cite{Yamazaki}, introducing a weak mild solution and proving existence and stability in the space $C_{\rm per}(\mathbb{R}; L^{n,\infty}(\Omega))$, where $L^{n,\infty}(\Omega)$ equals the weak $L^n$ space, with small time periodic external forces $f=\div F$ with 
$F \in L^{\frac{n}{2},\infty}(\Omega)$. 
His method based on an $L^1$ integral with weights in time is also known as the Yamzazaki estimate. 
Geissert,  Hieber and Nguyen \cite{GHN} consider the time periodic problem of an abstract parabolic equation as a generalization of this problem on Lorentz spaces. 
Galdi and Kyed \cite{GK} obtain time periodic solutions by combining a potential theoretical method for the stationary part with the transference of multipliers on the genuine oscillatory part of solutions.   
Recently Hieber and Nguyen \cite{HN} show existence of a unique $T$-periodic as well as almost time periodic mild solution and their stability in Lorentz spaces on the half space $\mathbb{R}^n_+$ under a small periodic external force $f=\div F$. 

Concerning the time periodic exterior domain case, further realistic situations have been studied.  
Galdi and Sohr \cite{GS} consider a Navier-Stokes liquid that is moving in a domain under the action of a small $T$-periodic body force $f=\div F$ and show the existence of a time periodic solution behaving like $1/|x|$ at spatial infinity;  for similar results we refer to \cite{Galdi-2020}.   
The problem of attainability was considered by Galdi and Hishida \cite{Galdi-Hishida}; based on Yamazaki's estimate they analyze the situation that a body $\mathcal B$ starts from rest by a translational motion in water, but attains after some time a spinless oscillatory motion of period $T$, leading to a Navier-Stokes solution with the same period.

\vspace{1ex}

In this paper we consider the classical time periodic Navier-Stokes problem \eqref{equ:ns} in exterior domains. However, we use a new method compared to the above approaches, {\em viz.} using radially symmetric Muckenhoupt weights in space. On the other hand, duality arguments and a representation of the  external \BLACK force $f$ in divergence form are not needed.

\begin{defi}{(Muckenhoupt class $\mathscr{A}_q$)}
Let $1<q <\infty$. A weighted function $0 \leq w \in L^1_{\rm loc}(\mathbb{R}^n)$ belongs to the Muckenhoupt class $\mathscr{A}_q(\IR^n) $ if the function $w$ satisfies 
$$
\sup_Q \bigg(\frac{1}{|Q|}\displaystyle\int_Q w \dx\bigg)\bigg(\frac{1}{|Q|}\displaystyle\int_Q w^{-1/(q-1)}\dx\bigg)^{q-1} \leq C <\infty
$$
for all cubes $Q \subset \mathbb{R}^n$, where $|Q|$ denotes the Lebesgue measure of $Q$.  
\end{defi}

For example, the radially symmetric weight functions $w(x)=\langle x \rangle^\alpha := (1+|x|^2)^{\alpha/2}$, $-n < \alpha < n(q-1),$ belong to the Muckenhoupt class $\mathscr{A}_q $. By analogy, in the homogeneous case,   $w(x)=|x|^\alpha \in \mathscr{A}_q $. One of the first applications of Muckenhoupt weights to the Stokes system was a weighted $L^2$ energy estimates with weights $\langle x \rangle^\alpha$ and resolvent estimates using the full Muckenhoupt class by Farwig and Sohr, see   \cite{FS96}, \cite{FS}. Since then numerous authors used weighted estimates to refine former estimates from $L^q$ norms to weighted $L^q$ norms. Among others we mention results by Bae, Brandolese and Jin \cite{Bae-Br-Jin}, Bae and Jin \cite{Bae-Jin}, 
Bae and Roh \cite{Bae-Roh}; 
see also further references in those articles. We refer to Takahashi \cite{Takahashi} for the current status about weighted norm estimates including anisotropic weights for the Navier-Stokes-Oseen system. Recently Maremonti and Pane \cite{Marem-Pane} constructed local-in-time mild solutions for the $n$-dimensional Cauchy problem in the intersection of three spaces with and without radial weights and proved uniqueness.

In the present paper we mainly focus on the weight $\langle x \rangle^\alpha$ such that we obtain periodic solutions in the critical scale invariant space $C(\mathbb{R}; L^n)$. 
We define the weighted $L^q$ space $L^q_s (\Omega)$ with weight $\langle x \rangle^{sq}$ and its solenoidal subspace $L^q_{s,\sigma} (\Omega)$ in Section 2 below. Our main result is stated as follows.  

\vspace{2ex}

\begin{thm}\label{existence-per_2} 
Let $\Omega\subset\IR^n$, $n \geq 4$, be an exterior domain  of class $C^3$.  Moreover, let $1< q_1 < n$, $n/2 <q_2<n$, and let ${q}_{12} = \frac{q_1 q_2}{q_1+q_2}$, ${q}^*_{22} =\frac{{q_2^*} q_2}{{q_2^*}+q_2}$ where
 $q_2^*=\frac{n q_2}{n-q_2}$. We suppose that $-\frac{n}{q_1}<s_1 < \frac{n}{q'_1}$, $0<s_2< \frac{n}{q'_2}$,   
$0 \leq s_1+s_2 < \frac{n}{q_{12}'}$, and
\begin{align} \label{As1} 
1 < & \;\frac{n}{q_1} + s_1 < n-2,\\
 2 < & \;\frac{n}{q_2} + s_2 <  \frac{n+1}{2} +\frac{s_2}{2}\BLACK . \label{As11}
\end{align}
\BLACK 
For the external force   
$ f\in  L^\infty_{\rm per}\big(\mathbb{R}; L^{{q}_{12}}_{s_1+s_2}(\Omega)\cap L^{{q}^*_{22}}_{2s_2}(\Omega)\big) $
assume that the norm 
\begin{align}\label{As2}
|f|_s  := \|f\|_{L^\infty_{\rm per}(\IR; L^{{q}_{12}}_{s_1+s_2}(\Omega))} + \|f\|_{L^\infty_{\rm per}(\IR; L^{{q}^*_{22}}_{2 s_2}(\Omega))} 
\end{align}
is sufficiently small. 
Then there exists a $T$-periodic solution $u \in C^0_{per}(\mathbb{R}; L^{n}_{\sigma}(\Omega))$  for either $s_1\geq 0$ or $s_1<0$ with small $|s_1|$ \BLACK  and
with $\nabla u \in L^\infty_{\rm per}(\mathbb{R};  L^{q_2}_{s_2}(\Omega))$ of \eqref{equ:ns} 
\BLACK
such that
\begin{align}\label{est-u-nablau-2}
 \sup\nolimits_{t\in [0,T]} \big(\big\|u(t)\big\|_{L^{q_1}_{s_1}(\Omega)} + \big\|\nabla u(t)\big\|_{L^{q_2}_{s_2}(\Omega)}\big) \leq C|f|_s. 
 \end{align}

Finally, $\nabla u \in C^0_{per}(\IR;L^{q_2}_{s_2}(\Omega))$ provided
$q_1\leq q_2$ and $\frac{1}{q_1}-\frac{1}{q_2} < \frac1n$. \BLACK
\end{thm}

\begin{rem}\label{rem0}
{\rm 

(1) Each of the conditions \eqref{As1} and \eqref{As11} immediately imply that Theorem \ref{existence-per_2} does not hold in the case $n=3$. 

(2) There is an obvious argument why in the linear case, omitting the nonlinear term $u\cdot\nabla u$, either $q_1>n$ or $s_1<0$ are necessary for $n=3$.
Actually, consider the fundamental solution, $E_{{\rm St}}(x)$, of the Stokes operator in $\IR^3$, modified by a cut off function and an application of the Bogovskii operator to get a smooth solenoidal vector field vanishing on $\partial\Omega$ and coinciding with $E_{{\rm St}}(x)$ for large $|x|$.  In this way we get a smooth stationary solution of \eqref{equ:ns} with a compactly supported and smooth right hand side, hence a $T$-periodic solution. In order to satisfy the smallness condition on $f$ in Theorem \ref{existence-per_2} we may multiply by a sufficiently small positive constant. Since $|E_{{\rm St}}(x)|\sim 1/r$, we need $1/r\in L^{q_1}_{s_1}(\Omega)$ or, in other words, $\frac{3}{q}+s<1$. Thus either $s_1 <0$ or $q_1>3$ are necessary if $q_1<3$ or $s_1>0$, respectively. However, for $n=4$ and $|E_{{\rm St}}(x)|\sim r^{-2}$, condition \eqref{As1} with $q_1<4$ can be satisfied with both positive and negative $s_1$.
}
\end{rem}

To the best of our knowledge, there are no results using 
Muckenhoupt weights in $L^q$ class for $1< q <\infty$ 
to construct time periodic solutions to \eqref{equ:ns} in the
exterior domain case. 
We note that for the specific case $q=2$ and  unbounded domains, 
there are a few results on $T$-periodic solutions using weighted $L^2$ energy estimates, see {\em e.g.} Keblikas \cite{Keblikas} for domains with cylindrical outlets to infinity  using different $L^2$ weights in different outlets. Moreover, Tsuda 
\cite{Tsuda} considered the compressible Navier-Stokes equations on the whole space with an analysis of the high frequency part via weighted $L^2$ energy estimates. 

Many results for the time periodic problem on unbounded domains use potential
theoretical estimates, {\em i.e.,} they are based on the analysis for stationary solutions in weighted $L^\infty$ norms, see
for example, \cite{GK, GS} 
and 
Nakatsuka \cite{Nakatsuka}, or on Yamazaki type estimates, see for example,  \cite{Galdi-Hishida, GHN, HN, Yamazaki}.

 
Furthermore, we obtain in Theorem \ref{existence-per_2} time periodic solutions in the critical scale invariant space $C(\mathbb{R}; L^n)$ and consider the non-divergence form in the assumption of the external force as in the study of Okabe and Tsutsui \cite{OTs} for the whole space case in Lorentz spaces. Our approach will be extended to the time periodic moving boundary problem, see the forthcoming paper \cite{FaTs-movperext}. 




The crucial point to obtain the main theorem is the sufficiently fast 
decay of the Stokes semigroup in time; this is reached by introducing  
Muckenhoupt weights in $L^q$. 
Actually, we use for any time $t$ an integral formulation on $(-\infty,t)$ as in \cite{Kozono-Nakao}, see \eqref{equ:ns-per} below.
Since the time integral is a global one, we need for the Stokes operator $A$ on $\Omega$ integrability of the Stokes semigroup $e^{-(t-\tau)A}$ for $\tau\in(-\infty,t)$. 

Usually, the decay rate is obtained by a duality argument combined with the divergence form of the nonlinear terms. 
To overcome the difficulty of decay in  exterior domains, Yamazaki uses in 
\cite[Theorem 2.2]{Yamazaki} estimates of the type $\|e^{-\tau A}u\|_{q,1} \leq C\tau^{n/2q-n/2p}\|u\|_{p,1}$ in endpoint Lorentz spaces $L^{q,1}(\Omega)$ and corresponding integral estimates 
in \cite[Corollary 2.3]{Yamazaki},  
nowadays also called Yamazaki estimates.  

We derive a faster decay by a weighted $L^p$-$L^q$ estimate of the non-stationary Stokes problem stated in Theorem \ref{Stokes-w}, {\em viz.} with the power $\tau^{-(n/2p-n/2q)} (1+\tau)^{-(s-s_0)/2}$ where  $q\geq p$ \BLACK and $ s\geq s_0$ control radial weights. This important result was announced by Kobayashi and Kubo \cite{Kobayashi-Kubo} about ten years ago, and in \cite{Kubo}  Kubo gave an outline of the proof. 
In this paper, we give a rigorous proof and, as an important application, solve the time periodic problem for the Navier-Stokes equations on an exterior domain. 
 

The proof of Theorem \ref{Stokes-w} is based on the decomposition of the problem to a domain far away from the boundary and an inner domain near the boundary by use of cut-off functions together with the Bogovski\u{\i} operator. 
%
A key point is that 
since domains are restricted to some ball for the inner domain problem, the weight functions have supports which are bounded uniformly. Thus we can apply estimates of \cite{DKS} directly to the inner domain part. 
The reduction to the whole space problem is also important for our aim. 
Concerning estimates on domains far away from the boundary as well as on several inner domains, we can apply estimates on the whole space. 


\vspace{1ex}



The organization of the present paper is as follows. 
In Sect.~2 notation and basic properties of the modified Stokes operators  are described.
The focus of Sect.~3 is on the rigorous analysis of weighted $L^p$-$L^q$ decay estimates for the Stokes semigroup. 
Finally, in Sect.~4, global-in-time nonlinear estimates are derived to obtain time periodic solutions by Banach's fixed point theorem via a nonlinear integral equation as in \cite{Kozono-Nakao}.

\section{Preliminaries}

\subsection{Notation} 

Let $\Omega \subset \IR^n$ be either a    bounded or an unbounded standard domain, {\em i.e.} either a whole or (perturbed) half space, or an exterior domain. Then
$L^q(\Omega)$, $1\leq q \leq \infty$, denotes the usual Lebesgue space with norm $\|\cdot\|_{L^q(\Omega)}$.
Furthermore, $W^{k,q}(\Omega)$ is the standard Sobolev space with norm denoted by $\|\cdot\|_{W^{k,q}(\Omega)}$.
The spaces of test functions and of solenoidal test functions are denoted by
$C^{\infty}_0(\Omega)$  and $C^{\infty}_{0,\sigma}(\Omega):=\{\varphi \in C^{\infty}_{0}(\Omega): \div \varphi=0\}$, respectively. Then
$L^q_{\sigma}(\Omega):= \overline{C^{\infty}_{0,\sigma}(\Omega)}^{\|\cdot\|_{L^q(\Omega)}}$, $1<q<\infty$, is the $L^q$ space of weakly solenoidal vector fields, $u$, with vanishing normal component, $u\cdot \textsl{n}$, on $\partial\Om$.
Note that we use the same notation $L^q(\Omega)$ {\em etc.}  for functions and vector fields. 

For a standard domain $\Omega\subset\IR^n$  with boundary of class $C^1$ \BLACK there exists the Helmholtz projection, {\em i.e.,} the projection $P_q\colon L^q(\Omega)\to L^q_{\sigma}(\Omega) \subset L^q(\Omega)$, $1<q<\infty$, such that the kernel of $P_q$ equals the space $G_q(\Omega)$ of all weak gradient fields in $L^q(\Omega)$. The index $q$ in the projection is omitted if no confusion will occur; actually, $P_q u = P_ru$ for all vector fields $u\in L^q(\Omega)\cap L^r(\Omega)$. We also note that the adjoint of $P_q$ coincides with the Helmholtz projection $P_{q'}: L^{q'}(\Omega) \to L^{q'}(\Omega)$ where $\frac1q+\frac1{q'}=1$.


The weighted $L^q$ space with Muckenhoupt weight $w \in \mathscr{A}_q$, $1<q < \infty$, is defined as 
$$
L^q_w (\Omega) = \Bigg\{ u \in L^1_{{\rm loc}} (\bar{\Omega}): \|u\|_{L^q_w (\Omega)}= \Big(\displaystyle\int_{\Omega} |u|^q w \dx\Big)^{1/q}<\infty\Bigg\}.  
$$ 
Similarly, weighted non-homogeneous and homogeneous Sobolev spaces are denoted by by 
\begin{align*}
& W^{k,q}_{w}(\Omega) = \{u \in L^q_{w}(\Omega): \nabla^\alpha u\in L^q_{w}(\Omega),\, 
|\alpha | \leq k\}, \\
& \widehat{W}^{k,q}_{w}(\Omega) = \{u \in W^{k,1}_{\rm loc}(\Omega): \nabla^\alpha u\in L^q_{w}(\Omega),\, 
|\alpha | = k\}
\end{align*}
with the norms 
\begin{align*}
\|u\|_{W^{k,q}_{w}(\Omega)}= \Bigg(\sum_{|\alpha|\leq k} \|\nabla^\alpha u\|_{L^q_{w}(\Omega)}^q\Bigg)^{1/q}, \\
\|u\|_{\widehat{W}^{k,q}_{w}(\Omega)}= \Bigg(\sum_{|\alpha|= k} \|\nabla^\alpha u\|_{L^q_{w}(\Omega)}^q\Bigg)^{1/q} 
\end{align*}
for $1<q < \infty$, $k \in \mathbb{N}_0$ and $w\in \mathscr{A}_q$, respectively; here $\alpha\in \IN_0^n$ is a multi-index, and $|\alpha|$ is the length of $\alpha$. \BLACK
Finally, we define for smooth domains the weighted Sobolev space $W^{k,q}_{0,w}(\Omega)$, $k\in\IN$, with the vanishing boundary condition by  
\begin{align*}
W^{k,q}_{0,w}(\Omega) = \{u \in W^{1,q}_{w}(\Omega): u=0 ,\ldots,\nabla^{k-1}u=0\mbox{ on }\del \Omega\},  
\end{align*}
where $\nabla^{\ell}u$ denotes the set of all partial derivatives of order $|\alpha|=\ell$. \BLACK

A very general extension property for homogeneous weighted Sobolev spaces is proved by Chua \cite{Chua} for general $(\varepsilon,\infty)$ domains, including smooth exterior domains.

\begin{lem}\label{extention} {\rm(\cite[Theorem 1.5]{Chua}, \cite[Theorem 2.2]{FroehII})}
Let $1<q <\infty$, $w\in \mathscr{A}_q$, and $k_1<\ldots<k_N\in\mathbb N_0$. Further let $\Omega\subset\IR^n$ be an exterior Lipschitz domain. \BLACK Then there exists a linear extension operator $E: \bigcap_{i=1}^N \widehat{W}^{k_i,q}_{w}(\Omega)\rightarrow  \bigcap_{i=1}^N \widehat{W}^{k_i,q}_{w}(\mathbb{R}^n)$ such that 
$$
\|\nabla^{k_i} Eu \|_{L^q_w(\mathbb{R}^n)} \leq C_i \|\nabla^{k_i} u\|_{L^q_w(\Omega)}
$$
for all $i=1, \ldots,N$ and $u \in \bigcap_{i=1}^N \widehat{W}^{k_i,q}_{w}(\Omega)$. 
\end{lem}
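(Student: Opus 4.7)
The plan is to reduce the statement to the Jones--Chua extension theorem for weighted Sobolev spaces on $(\varepsilon,\infty)$-domains in the sense of Jones. The first thing I would verify is that any exterior Lipschitz domain $\Omega$ is an $(\varepsilon,\infty)$-domain: since $\bd\Omega$ is compact and Lipschitz, near $\bd\Omega$ the local Lipschitz charts supply the twisted-cone condition on a uniform neighbourhood, while points of $\Omega$ that are far from $\bd\Omega$ can be joined by straight segments staying well inside $\Omega$ (after possibly a short initial/final detour around the boundary via a local chart). This places $\Omega$ in the scope of Chua's construction and produces the candidate extension operator $E$.

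Next I would spell out the construction and show that a single $E$ serves all the indices $k_1<\dots<k_N$ simultaneously. Choose a Whitney decomposition $\{Q_j\}$ of the bounded complement $\Omega^c$, and to each $Q_j$ associate a reflected Whitney cube $Q^*_j\subset\Omega$ of comparable side length and bounded ratio $\dist(Q_j,Q^*_j)/\diam(Q_j)$. Let $\{\varphi_j\}$ be a smooth partition of unity subordinate to a mild dilation of $\{Q_j\}$ and, for $u\in\bigcap_i\widehat{W}^{k_i,q}_{w}(\Omega)$, let $P_j u$ be the averaged Taylor polynomial of $u$ of degree $k_N-1$ over $Q^*_j$. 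Set $Eu=u$ on $\Omega$ and
$$
Eu(x)=\sum_j \varphi_j(x)\,P_j u(x),\qquad x\in\Omega^c.
$$
Using the standard partition-of-unity telescoping, $\nabla^{k_i}(Eu)$ on a cube $Q_j$ can be rewritten as a bounded sum of differences $P_j u - P_{j'} u$ over neighbouring $j'$, weighted by derivatives of $\varphi_j$ of total order up to $k_i$. Each such difference is controlled, via the weighted Poincar\'e inequality on $Q^*_j\cup Q^*_{j'}$, by $\|\nabla^{k_i}u\|_{L^q_w(Q^*_j\cup Q^*_{j'})}$. Summing in $j$ and exploiting the bounded overlap of the enlarged Whitney cubes together with the $\mathscr{A}_q$ doubling property of $w$ yields the desired global estimate for each $i$ with the \emph{same} operator $E$.

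The main obstacle is the interplay between the Muckenhoupt weight and the polynomial reflection. Since $w$ may degenerate or blow up and is only locally integrable, every classical ingredient---estimates on derivatives of $\varphi_j$, comparison of averaged Taylor polynomials over adjacent cubes, and the sum over Whitney cubes---must be upgraded to its weighted analogue. The indispensable inputs are the weighted Poincar\'e inequality on cubes and the self-improving (reverse H\"older) property of $\mathscr{A}_q$ weights; together they let one absorb the overlap sums and control the lower-order coefficients of $P_j u$ in terms of $\|\nabla^{k_i}u\|_{L^q_w}$ alone, which is the only information available in the homogeneous space $\widehat{W}^{k_i,q}_w$. The universality of $E$ in $i$ then follows because the degree-$(k_N-1)$ averaged Taylor polynomial simultaneously reproduces $u$ to the correct order for every $k_i\le k_N$, so a single extension operator bounds each homogeneous semi-norm $\|\nabla^{k_i}\cdot\|_{L^q_w(\IR^n)}$ as claimed.
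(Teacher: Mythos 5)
The paper does not actually supply a proof of Lemma~\ref{extention}; it merely cites \cite[Theorem 1.5]{Chua} and \cite[Theorem 2.2]{FroehII}. Your proposal unpacks the Jones--Chua Whitney-cube construction behind those citations, and the outline is essentially right: exterior Lipschitz domains are uniform (Jones) domains, and since the complement is compact the extension is a local operation near $\partial\Omega$; one then reflects Whitney cubes into $\Omega$, forms averaged Taylor polynomials, glues with a partition of unity, and estimates via the chain inequality for adjacent cubes together with bounded overlap and the $\mathscr{A}_q$ doubling property. This is genuinely different in presentation from the paper (which simply cites), and it gives a self-contained picture of where the operator $E$ comes from.

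One point, though, is stated too loosely and deserves to be made precise, because it is exactly where the single operator $E$ could appear to fail for the lower indices $k_i<k_N$. If $P_ju$ were literally the degree-$(k_N-1)$ Taylor expansion built from the derivatives of $u$ up to order $k_N-1$, then $D^\alpha P_ju$ with $|\alpha|=k_i$ would involve derivatives of $u$ of orders $k_i,\dots,k_N-1$, and the target estimate $\|\nabla^{k_i}Eu\|_{L^q_w(\IR^n)}\le C_i\|\nabla^{k_i}u\|_{L^q_w(\Omega)}$ would not be visible. The resolution, which you should state explicitly, is that the averaged Taylor polynomial must be taken in its integrated-by-parts (projection) form: all derivatives are moved onto the weight $\phi$, so $P_j$ is a bounded operator on $L^q_w(Q^*_j)$ and reproduces every polynomial of degree $\le k_N-1$. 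It then follows, by the usual Bramble--Hilbert argument, that the single operator $P_j$ satisfies the \emph{multi-order} approximation estimate $\|v-P_jv\|_{L^q_w(Q^*_j)}\lesssim \ell(Q_j)^r\|\nabla^r v\|_{L^q_w(Q^*_j)}$ for \emph{every} $r\le k_N$, not just $r=k_N$. Combined with the commutation relation $D^\beta P_j u = P_j^{(k_N-1-|\beta|)}(D^\beta u)$ and $|\beta|\le k_i$ in the telescoping sum, this is what yields a pure $\|\nabla^{k_i}u\|$ bound for each $i$ with the same $E$. A minor related point: the weighted Poincar\'e control of $P_ju-P_{j'}u$ holds across a \emph{chain} of reflected Whitney cubes joining $Q^*_j$ to $Q^*_{j'}$, with the final polynomial growth estimate propagating the bound from the chain back to $Q_j$; the estimate is not directly over $Q^*_j\cup Q^*_{j'}$ when those cubes are not adjacent.
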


Let $M$ denote the (centered) Hardy-Littlewood maximal operator, {\em i.e.,} 
$$ Mu(x) = \sup_{r>0} \frac{1}{|B_r(x)|} \int_{B_r(x)} |u(y)| \dy, $$
where $B_r(x) = \{y\in\IR^n: |y-x|<r\}.$ 
\BLACK 
By \cite[Theorems 2.1.6 and 7.1.9]{GrafakosI} $M$ is a bounded operator not only on $L^q(\IR^n)$ for each $1<q<\infty$, but also on $L^q_w(\IR^n)$ for each $w\in \mathscr A_q$: 
\begin{equation}\label{Muu}
\|Mu\|_{L^q_w(\IR^n)} \leq C\|u\|_{L^q_w(\IR^n)}. \end{equation}

For later use we mention the following pointwise estimate: Let $\rho:\IR^n \to [0,\infty)$ be a decreasing, radially symmetric smooth function satisfying $\int_{\IR^n} \rho\dx = 1$, and put $\rho_\varepsilon(x) = \varepsilon^{-n} \rho\big(\frac{x}{\varepsilon}\big)$ for any $\varepsilon>0$. Then by \cite[Theorem 2.1.10]{GrafakosI}, in the pointwise sense,
\begin{equation}\label{rho-Mu}
\sup_{\varepsilon>0}\, (\rho_\varepsilon*|u|)(x) \leq Mu(x). 
\end{equation}
 
As a further property of Muckenhoupt weights Riesz operators and, more generally, Calder\'on-Zygmund operators are bounded on $L^q_w(\IR^n)$ for $w\in \mathcal A_q(\IR^n)$, see \cite[Theorem 7.4.6]{GrafakosI}. 
This fact allows to prove that 
\begin{equation}\label{Delta12}  
\|\nabla v\|_{L^q_w(\IR^n)} \leq c\big\|(-\Delta)^{\frac12} v\big\|_{L^q_w(\IR^n)}
\end{equation}
with norm bound $c=c_{q,w}>0.$ Indeed, we use the identity $i \xi_j=i\frac{\xi_j}{|\xi|}\,|\xi|$ with multiplier function $i\frac{\xi_j}{|\xi|}$. Then the boundedness of Riesz operators on $L^q_w(\IR^n)$ yields \eqref{Delta12}.


Weight functions used in this article are of the radially symmetric type $w=\langle x\rangle^{sq} = \big(\sqrt{1+|x|^2}\big)^{s q}$  defining the weighted space 
$$
L^q_s (\Omega) := \bigg\{ u \in L^1_{\rm  loc} (\bar{\Omega}); \|u\|_{L^q_s (\Omega)}= \Big(\displaystyle\int_{\Omega} |u|^q \langle x\rangle^{sq} \dx\Big)^{1/q}<\infty\bigg\}  
$$
for $1<q < \infty$,  {\em i.e.,} 
we replace the index $w$ by $s$. 
Similarly, weighted Sobolev spaces $W^{k,p}_{s}$, $W^{k,p}_{0,s}$ are defined. 
Concerning function spaces of solenoidal vector fields let $L^q_{\sigma,s}(\Omega)$ denote the closure of $C^\infty_{0,\sigma}(\Omega)$ with respect to the norm of $L^q_s(\Omega)$. As already mentioned, $w =\langle x\rangle^{sq}\BLACK \in \mathscr A_q(\IR^n)$ if and only if 
$-\frac{n}{q} < s < n\big(1-\frac{1}{q}\big) = \frac{n}{q'}$. 

The following embedding result for weighted Sobolev spaces is well-known for $\IR^n, \IR^n_+$ and bounded domains, but, to the best of our knowledge, has not been described for homogeneous Sobolev spaces on \BLACK exterior domains. 
Therefore, we will give a short proof  for the space $\widehat W^{1,q}_{0,s}(\Omega) = \{U\in \widehat W^{1,q}_{0,s}(\Omega): U|_{\partial\Omega}=0\}$; here $U|_{\partial\Omega}=0$ in the sense that there exists a representant $u\in W^{1,q}_{\rm loc}(\bar\Omega)$ of $U$ such that $u=0$ on $\partial\Omega$. 
\BLACK

\vspace{1ex} 
\begin{prop}\label{embed-weighted}
Let $1<q<\infty$, $-\frac{n}{q} < s < \frac{n}{q'}$, and let $q^*=\frac{nq}{n-q}$. 
Then there holds the embedding $\widehat W^{1,q}_{0,s}(\Omega) \hookrightarrow L^{q^*}_s(\Omega)$, {\em i.e.}, $\|u\|_{L^{q^*}_s} \leq C\|\nabla u\|_{L^{q}_s }$ for all $u\in \widehat W^{1,q}_{0,s}(\Omega)$. 


\end{prop}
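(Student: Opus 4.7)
The strategy is to reduce the embedding on the exterior domain $\Omega$ to the analogous inequality on $\IR^n$ by extending $u$ by zero across $\partial\Omega$, exploiting the vanishing trace built into $\widehat W^{1,q}_{0,s}(\Omega)$. The two main ingredients are the Riesz potential representation of a Sobolev function with vanishing boundary value and the weighted Hardy--Littlewood--Sobolev estimate for the Riesz potential of order one.

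First I would show that for $u\in \widehat W^{1,q}_{0,s}(\Omega)$ the zero extension $\tilde u$ to $\IR^n$ lies in $W^{1,q}_{\loc}(\IR^n)$ with distributional gradient $\nabla\tilde u = (\nabla u)\chi_\Omega$, so that
\begin{equation*}
\|\nabla\tilde u\|_{L^q_s(\IR^n)} = \|\nabla u\|_{L^q_s(\Omega)}.
\end{equation*}
This uses the vanishing-trace representative $u\in W^{1,q}_{\loc}(\overline{\Omega})$, which ensures that no singular boundary term appears in the weak derivative. By a cut-off plus mollification argument (standard near $\partial\Omega$ thanks to the vanishing trace, and using a radial cut-off together with weighted control at infinity), $\tilde u$ can then be approximated by functions $v_k\in C_0^\infty(\IR^n)$ in the seminorm $\|\nabla\cdot\|_{L^q_s(\IR^n)}$.

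For any $v\in C_0^\infty(\IR^n)$ the classical identity
\begin{equation*}
v(x) = \frac{1}{n\omega_n}\int_{\IR^n}\frac{(x-y)\cdot\nabla v(y)}{|x-y|^n}\,dy
\end{equation*}
yields the pointwise bound $|v(x)|\leq C_n\, I_1(|\nabla v|)(x)$ for the Riesz potential $I_1 f(x) = \int_{\IR^n}|x-y|^{-(n-1)}f(y)\,dy$. The key analytic input is then the weighted Hardy--Littlewood--Sobolev estimate
\begin{equation*}
\bigl\|\langle x\rangle^s\, I_1 f\bigr\|_{L^{q^*}(\IR^n)} \leq C\bigl\|\langle x\rangle^s f\bigr\|_{L^q(\IR^n)},
\end{equation*}
valid in the given range $-n/q<s<n/q'$. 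Since $1/q - 1/q^* = 1/n$, this range is exactly the one for which the weight $w(x)=\langle x\rangle^s$ satisfies the two-weight Muckenhoupt $\mathscr{A}_{q,q^*}$ condition (the cube averages split into the cases $|x_0|\gtrsim r$ and $|x_0|\lesssim r$, the latter requiring $sq^*>-n$ and $-sq'>-n$, which amount precisely to $s>-n/q^*\geq -n/q$ and $s<n/q'$). Hence the estimate follows from the Muckenhoupt--Wheeden theorem on fractional integrals. Applying this to $f=|\nabla v_k|$, passing to the limit $k\to\infty$, and restricting to $\Omega$ delivers the desired embedding $\|u\|_{L^{q^*}_s(\Omega)}\leq C\|\nabla u\|_{L^q_s(\Omega)}$.

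The main technical obstacle is verifying that the zero extension genuinely produces an element of $W^{1,q}_{\loc}(\IR^n)$ in the weighted setting, together with the accompanying density of smooth approximants in the seminorm $\|\nabla\cdot\|_{L^q_s}$. In the unweighted setting this is classical; in the weighted case it transfers because $\langle x\rangle^{sq}$ is bounded from above and below on any fixed tubular neighborhood of $\partial\Omega$, so the standard Meyers--Serrin-plus-boundary-cutoff procedure goes through with constants depending only on $\mathrm{diam}(\partial\Omega)$ and $s$. Once this extension step is secured, what remains is purely the classical weighted Riesz potential theory on $\IR^n$.
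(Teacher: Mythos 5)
Your proposal follows the same overall strategy as the paper's proof: zero-extension of $u$ to $\IR^n$, density of $C_0^\infty(\IR^n)$ in the homogeneous seminorm, and then a weighted Sobolev-type estimate from Muckenhoupt--Wheeden. Where you diverge is in the analytic core: the paper quotes the weighted Sobolev inequality directly (\cite[Theorem 9]{MuWh}) in its inhomogeneous form $\|u_k\|_{L^{q^*}_s} \leq C\big(\|u_k\|_{L^q_s}+\|\nabla u_k\|_{L^q_s}\big)$ and then notes that for $u_k\in C_0^\infty$ the lower-order term can be dropped, whereas you re-derive that inequality from the Riesz potential representation $|u|\lesssim I_1|\nabla u|$ combined with the two-weight fractional integral theorem (\cite[Theorem 4]{MuWh}). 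Your route is arguably more self-contained (it bypasses the remark that the lower-order term can be omitted), and is also exactly the technique the paper itself uses later in Step 3 of the proof of Proposition~\ref{whole-est-basis}.

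There is, however, a quantitative discrepancy in the range of $s$ that you should address. The two-weight Muckenhoupt condition for $I_1\colon L^q(\langle x\rangle^{sq})\to L^{q^*}(\langle x\rangle^{sq^*})$ forces $sq^* > -n$ together with $sq' < n$, i.e.\ $-n/q^* < s < n/q'$. Since $q < q^*$ gives $-n/q < -n/q^*$, this is a \emph{strictly smaller} interval than the proposition's hypothesis $-n/q < s < n/q'$; your inequality ``$s > -n/q^* \geq -n/q$'' points in the wrong direction and masks this. So as written your argument only establishes the embedding for $s > -n/q^*$ and leaves the sub-range $-n/q < s \leq -n/q^*$ open, whereas \cite[Theorem 9]{MuWh} as invoked by the paper is stated under the $\mathscr{A}_q$ hypothesis on $\langle x\rangle^{sq}$, which is precisely $-n/q < s < n/q'$. (Since every application of the proposition in this paper uses $s\geq 0$, the gap is immaterial for the rest of the article, but it is a gap relative to the statement as given.) Finally, your density step is asserted rather than established; the paper backs it with \cite[Corollary 4.3]{FS}, and you should either cite that result or actually carry out the cut-off-plus-mollification argument, since density of $C_0^\infty$ in homogeneous weighted Sobolev seminorms is not automatic for every exponent and weight.
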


\begin{proof}
First we extend $u\in \widehat W^{1,q}_{0,s}(\Omega)$ by $0$ to a function in $\widehat  W^{1,q}_{s}(\IR^n)$, 
called $u$ again. By \cite[Corollary 4.3]{FS} there exists a sequence $\{u_k\}\subset C_{0}^\infty(\IR^n)$ such that $u_k\to u$ in $\widehat  W^{1,q}_{s}(\IR^n)$ as $k\to\infty$. Then a weighted embedding estimate in its inhomogeneous form, see \cite[Theorem 9]{MuWh}, states that
\begin{align}\label{Sobolev-weight}
\|u_k\|_{L^{q^*}_s(\mathbb{R}^n)} \leq C\big(\|u_k\|_{L^{q}_s(\mathbb{R}^n)} + \|\nabla u_k\|_{L^{q}_s(\mathbb{R}^n)}\big).
\end{align}
However, the proof shows that for $u_k\in C^\infty_0(\IR^n)$ the term $\|u_k\|_{L^{q}_s(\mathbb{R}^n)}$ may be omitted, {\em i.e.}, we get the homogeneous estimate $\|u_k\|_{L^{q^*}_s (\mathbb{R}^n)} \leq C\|\nabla u_k\|_{L^{q}_s (\mathbb{R}^n)}$. Since this estimate also holds for $u_k-u_\ell$, we conclude that $\{u_k\}$ converges in $L^{q^*}_s(\mathbb{R}^n)$ to $u$. Hence, as $k\to \infty$, we obtain that
$\|u\|_{L^{q^*}_s(\IR^n)} \leq C\|\nabla u\|_{L^{q}_s(\IR^n)}$ and the same estimate on $\Omega$ replacing $\IR^n$. \BLACK
\end{proof}
\BLACK



\vspace{2ex}
We define the Bogovski\u{\i} operator on the annulus
$D_R = \{x: R<|x|<R+1\},\, R>0$,  such that $\IR^n\setminus\Omega\subset B_R=\{x: |x|<R\}$. 
For $1<q < \infty$ and $k \in \mathbb{N}_0$, let 
$$
W^{k, q}_m(D_R) = W^{k,q}_0(D_R) \cap L^q_m(D_R),  
$$
 where $L^q_m(D_R)$ is the set of all $u\in L^q(D_R)$ such that the integral mean on $D_R$ satisfies $\int_{D_R} u\dx=0$.
Then there exists a linear bounded operator 
$\mathbb{B}: W^{k,q}_m(D_R) \rightarrow {W}^{k+1,q}(\mathbb{R}^n)$, called the Bogovski\u{\i} operator, satisfying that 
\begin{align}\label{B}
\div \mathbb{B}[f]\big|_{D_R} = f,  \BLACK\ \ \supp \mathbb{B}[f] \subset \overline{D_R} \;\; \mbox{ and } \ 
\|\nabla^{k+1}\mathbb{B}[f]\|_{L^q(\mathbb{R}^n)} \leq C \|\nabla^k f\|_{L^q(D_R)}
\end{align}
for $f \in W^{k,q}_m(D_R)$.  
For details, see {\em e.g.} \cite{BH, Galdi-1}. 
From \cite[Chapter III.3]{Galdi-1} we also get that 
\begin{align}\label{B-2}
\|\mathbb{B}[\div f]\|_{L^q(\mathbb{R}^n)} \leq C\|f\|_{L^q(D_R)}, 
\end{align}
{\em i.e.,} $\mathbb{B}$ admits an extension to an operator from $W^{-1,q}$ to $L^q$. 
Due to \cite{GHM}, $\mathbb{B}$ can be  extended to Sobolev spaces $W^{\alpha,p}_0$ \BLACK with negative exponents $\alpha>-2+\frac1p$.




\vspace{2ex}



\subsection{The Stokes operator in spaces with Muckenhoupt weights}

Given the Helmholtz projection $P=P_q$, $1<q<\infty$, the Stokes operator on an unbounded standard domain $\Omega\subset \IR^n$ is defined by 
$$
	A_q := -P\Delta_{\Omega}\colon \D(A_q)= W^{2,q}(\Omega)\cap W^{1,q}_0(\Omega)\cap L^q_{\sigma}(\Omega)\subset L^q_{\sigma}(\Omega)\to L^q_{\sigma}(\Omega).
$$
Since $A_qu=A_pu$ for $u\in \D(A_q) \cap \D(A_p)$ for $1<q,p<\infty$, we also write $A$ for $A_q$ {\em etc.}

Let us recall several results on the Stokes operator on an exterior domain $\Omega\subset \IR^n$ with boundary of class $C^{2 }$ in $L^q$ spaces without and with weights of Muckenhoupt class $\mathscr{A}_q$. All results in this subsection hold for all dimensions $n\geq 3$. \BLACK

First of all, the Stokes operator $A_q$, $1<q<\infty$, is injective and has dense domain $\mathcal D(A_q)$ and  dense range $\mathcal R(A_q)$ in $L^q_\sigma(\Omega)$. Its adjoint $(A_q)^*$ coincides with the Stokes operator $A_{q'}$ on $L^{q'}_\sigma(\Omega)$.
For $\lambda$ in the sector $\Sigma_\omega =\{0\neq \mu\in\IC: |{\rm arg}\, \mu|<\omega\}$ where $0<\omega<\pi$  the Stokes resolvent problem
$\lambda u + A_q u =f$ 
has a unique solution $u\in {\D}(A_q)$ with the resolvent estimate
\begin{align}\label{equ:rse}
	\|\lambda u\|_{L^q(\Omega)}  + \|A_q u\|_{L^q(\Omega)}  \leq c_\omega \|f\|_{L^q(\Omega)}.
\end{align}
Hence $\|\lambda(\lambda+A_q)^{-1}\|_{\mathcal L(L^q_\sigma(\Omega))} \leq c_\omega$, $A_q$ is a sectorial operator, and $\mathcal D(A_q^k) \cap \mathcal R(A_q^k)$ is dense in $L^q_\sigma(\Omega)$ for any $k\in\IN$. 

In addition, the following more detailed resolvent estimate  
\begin{align}\label{equ:rse2}
	\|\lambda u\|_{L^q(\Omega)}  +  |\lambda|^\frac12 \|\nabla u\|_{L^q(\Omega)}  +\|\nabla^2 u\|_{L^q(\Omega)}  \leq c \|(\lambda+A_q)u\|_{L^q(\Omega)}
\end{align}
for $u\in\mathcal D(A_q)$ and all $\lambda\in \Sigma_\omega$ holds. In fact, we get \eqref{equ:rse2} with a constant $c=c_{q,\omega,\delta}>0$ when $|\lambda|>\delta>0$. If $1<q< n/2$,
there exists a constant $c=c_{q,\omega}>0$ such that \eqref{equ:rse2} even holds for all $\lambda\in \Sigma_\omega$.  

Concerning the exterior domain case, the following $L^p$-$L^q$ decay estimates without weights are obtained by 
\cite{DKS, Iwashita, Shi-monogr}. 

\vspace{1ex}
\begin{thm}\label{Stokes-est-exterior} 
Let $\Omega\subset \IR^n$, $n\geq 3$, be an exterior domain with $C^2$ boundary, $1< p \leq q < \infty$. 
Then for $u \in L^p_{\sigma}(\Omega)$ the following estimates holds:  
\begin{align}\label{e-tAt}
\begin{aligned}
\big\|e^{-t A} u\big\|_{L^q (\Omega)} & \leq C t^{-\frac{n}{2}\big(\frac{1}{p}-\frac{1}{q}\big)}\| u \|_{L^p(\Omega)}, \ \ \mbox{for} \ \  0<t, \\
\big\| \nabla^\alpha  e^{-t A} u\big\|_{L^q(\Omega) } & \leq C \Big(t^{-\frac{n}{2}\big(\frac{1}{p}-\frac{1}{q}\big)-\frac{|\alpha|}{2}} +  t^{-\frac{n}{2p}}\Big)\| u \|_{L^p(\Omega)},  \quad |\alpha|=1,2, \ \  
\mbox{for} \ \ 1< t, \\
\big\|\nabla^\alpha e^{-t A} u\big\|_{L^q(\Omega) } & \leq C t^{-\frac{n}{2}\big(\frac{1}{p}-\frac{1}{q}\big)-\frac{|\alpha|}{2}} \| u \|_{L^p(\Omega)}, 
\quad |\alpha|=1,2, \ \ \mbox{for} \ \  0< t \leq 1.   
\end{aligned}
\end{align}   
\end{thm}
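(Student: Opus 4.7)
The plan is to split into two regimes: the short-time regime $0<t\leq 1$, where standard analytic semigroup theory suffices, and the long-time regime $t>1$, where the exterior-domain geometry generates the extra slow-decay term $t^{-n/(2p)}$ appearing in the second line of \eqref{e-tAt}. For $0<t\leq 1$, the resolvent estimate \eqref{equ:rse} implies that $-A_q$ generates a bounded analytic semigroup, hence $\|A_q^{k/2}e^{-tA}u\|_{L^q}\leq C t^{-k/2}\|u\|_{L^q}$ on $(0,1]$; combined with the embedding $\mathcal{D}(A_q)\hookrightarrow W^{2,q}(\Omega)\cap W^{1,q}_0(\Omega)$ (which uses the $C^2$ regularity of $\partial\Omega$) and with classical Sobolev embeddings in dimension $n$, one obtains the derivative bounds with exponent $t^{-n/2(1/p-1/q)-|\alpha|/2}$ by complex interpolation between $L^p_\sigma$ and $\mathcal{D}(A_p)$. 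The first line of \eqref{e-tAt} for $0<t\leq 1$ follows from the same ingredients applied to $(1+A_q)^\theta$ with $\theta=\frac{n}{2}(\frac1p-\frac1q)$.

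For the long-time regime $t>1$ I would work with the contour-integral representation
\[
 e^{-tA}u = \frac{1}{2\pi i}\int_\Gamma e^{-\lambda t}(\lambda+A)^{-1}u\,d\lambda,
\]
deforming $\Gamma$ so that it passes close to the origin, since the slowest-decaying contributions come from $\lambda$ near $0$. The crucial input is an asymptotic expansion at $\lambda=0$ of $R(\lambda):=(\lambda+A)^{-1}$ in suitable local/weighted norms, of the form $R(\lambda)=R_0+\lambda^{(n-2)/2}R_1+o(\lambda^{(n-2)/2})$ for $n\geq 3$, as developed by Iwashita and refined in \cite{DKS, Shi-monogr}. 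Inserting this expansion into the contour integral and integrating by parts in $\lambda$ produces the slow factor $t^{-n/(2p)}$ for derivatives.

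To realise such an expansion on a genuine exterior domain, I would perform a cut-off/Bogovski\u{\i} decomposition. Pick a smooth cut-off $\chi$ equal to $1$ on $B_R\supset\IR^n\setminus\Omega$ and supported in $B_{R+1}$. Split $e^{-tA}u_0$ accordingly into an interior piece (supported in a bounded sub-domain, to which interior analytic semigroup theory and the resolvent analysis on bounded sets applies) and an exterior piece that is compared to the whole-space Stokes semigroup; for the latter, the explicit Gaussian-Oseen kernel delivers the fast decay $t^{-n/2(1/p-1/q)-|\alpha|/2}$. Since cutting a solenoidal vector field destroys $\div =0$, at each step I correct by $\mathbb{B}$ from \eqref{B}--\eqref{B-2}, producing error terms supported in the annulus $D_R$; these are handled by local $L^q$--$L^q$ bounds on $e^{-(t-1)A}$ via the first line of \eqref{e-tAt} with $p=q$ and the short-time estimates already established.

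The main obstacle is the rigorous treatment of the low-frequency expansion of $R(\lambda)$. One must prove that, between appropriate localised function spaces, $\lambda\mapsto R(\lambda)$ is H\"older continuous at $\lambda=0$ of order $(n-2)/2$, and identify the leading coefficients through Fredholm theory for a perturbed whole-space problem restricted to a bounded sub-domain, exploiting the injectivity of $A_q$ and the decay of the Newtonian potential in dimensions $n\geq 3$. The extra boundary term $t^{-n/(2p)}$ then emerges precisely from the non-analyticity of $R(\lambda)$ at $\lambda=0$, which is absent in the whole-space case. Once this is in place, gluing the short-time and long-time estimates via $e^{-tA}=e^{-A}\, e^{-(t-1)A}$ finishes the argument.
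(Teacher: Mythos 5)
The paper does not actually prove Theorem~\ref{Stokes-est-exterior}; it cites it from Iwashita, Dan--Kubo--Shibata and Shibata's monograph, and the only in-text explanation is Remark~\ref{01t}, which addresses solely the improvement $\eqref{e-tAt}_3$ versus $\eqref{e-tAt}_2$ for $0<t\leq 1$. Your long-time programme (Dunford contour squeezed to the origin, low-frequency H\"older/asymptotic expansion of $(\lambda+A)^{-1}$ in local norms, cut-off plus Bogovski\u{\i} correction to compare with the whole-space Stokes semigroup, and the slow boundary term $t^{-n/(2p)}$ coming from the non-analytic local part of the resolvent) is exactly the Iwashita--DKS strategy, i.e.\ the approach the paper relies on via the cited references, so that portion is sound as a sketch.

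Where you part ways with the paper is the short-time regime, and there is a gap. You propose to get $\|\nabla^\alpha e^{-tA}u\|_{L^q}\lesssim t^{-|\alpha|/2}\|u\|_{L^q}$ from the chain: analyticity $\Rightarrow$ $\|A^{k/2}e^{-tA}\|\lesssim t^{-k/2}$, together with $\dom(A_q)\hookrightarrow W^{2,q}$, and then interpolate. This works for $|\alpha|=2$ (one gets $\|\nabla^2 e^{-tA}u\|\lesssim (\|Ae^{-tA}u\|+\|e^{-tA}u\|)\lesssim t^{-1}\|u\|$ for $t\leq 1$), but for $|\alpha|=1$ the step from $\|A^{1/2}e^{-tA}u\|$ to $\|\nabla e^{-tA}u\|$ requires $\dom(A_q^{1/2})\hookrightarrow W^{1,q}$ — the Stokes square-root / Riesz-transform property. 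That does not follow from $\dom(A_q)\hookrightarrow W^{2,q}$ plus abstract interpolation; you would have to invoke the characterization $\dom(A_q^{1/2})= W^{1,q}_0\cap L^q_\sigma$ (or BIP/$H^\infty$-calculus for $A_q$ on exterior domains) as a separate, nontrivial ingredient, and likewise the identification of $\dom\big((1+A_q)^\theta\big)$ needed for your $L^p$-to-$L^q$ step. The paper avoids this entirely: Remark~\ref{01t} plugs the \emph{full} resolvent estimate \eqref{equ:rse2}, which already contains $|\lambda|^{1/2}\|\nabla u\|+\|\nabla^2 u\|\leq c\|(\lambda+A)u\|$, into the Dunford integral along a contour of fixed radius~$1$, so the gradient bounds fall out with no square-root theorem. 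If you keep your route, you must add the square-root/BIP input explicitly; otherwise it is cleaner to adopt \eqref{equ:rse2} as the paper does.
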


\begin{rem}\label{01t}
{\rm
The improvement $\eqref{e-tAt}_3$  compared to the estimate \BLACK $\eqref{e-tAt}_2$ is obtained by 
the resolvent estimate \eqref{equ:rse2} and Dunford's integral calculus to represent $e^{-t A}$. Recall that $e^{-tA}$ can be expressed as a Cauchy integral $\frac{1}{2\pi i} \int_{\Gamma_{t,\theta}} e^{-\lambda t} (\lambda-A)^{-1} \dlambda$ along the unbounded contour $\Gamma_{t,\theta}$ consisting of two rays $re^{\pm i\theta}$, $r\geq \frac1t$, $\theta\in (0,\frac{\pi}{2})$, and a circular arc of radius $\frac1t$ surrounding the origin in the left complex half  plane. For $0<t<1$ the contour $\Gamma_{t,\theta}$ can be chosen with radius $1$ independent of $t$. \BLACK Thus \eqref{equ:rse2} is exploited with the same constant $c=c_{q,\omega,1}>0$ independent of $\lambda \in \Gamma_{1,\theta}$.
}
\end{rem}
\BLACK

\vspace{1ex}

In the Muckenhoupt class $\mathscr{A}_q$, 
Sohr and the first author of this article \cite{FS} analyze the Helmholtz projection and \BLACK the Stokes operator to obtain the following properties. 

\vspace{1ex}
\begin{thm} {\em \cite[Theorem 1.3]{FS}}
Let $n\geq 3$ and $\Omega \subset \mathbb{R}^n$ be an exterior domain with $C^1$ boundary, let $q \in (1, \infty)$ and $w \in \mathscr{A}_q$. 
\begin{enumerate}
\item[{\rm (i)}] $L^q_w(\Omega)$ has a unique algebraic and topological decomposition 
$$
L^q_w(\Omega) = L^q_{\sigma,w}(\Omega) \oplus
G^q_w (\Omega), 
$$
where 
$ G^q_w (\Omega) = \{\nabla \pi \in L^q_w(\Omega): \pi \in L^1_{\loc}(\bar{\Omega})\} $.
In particular, there exists a unique bounded projection
$P= P_{q,w}: L^q_{w}(\Omega) \rightarrow L^q_{\sigma,w}(\Omega)$ with null space  $G^q_w (\Omega)$.

\item[{\rm (ii)}] $(P_{q,w})^*=P_{q', w'}$ and $(L^q_{\sigma,w}(\Omega))^*= L^{q'}_{\sigma,w'}(\Omega)$, where $q'= q/(q-1)$ and $w'= w^{-1/ (q-1)}$.
\end{enumerate}
\end{thm}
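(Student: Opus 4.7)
The plan is to construct $P_{q,w}$ by solving the weighted weak Neumann problem: given $f\in L^q_w(\Omega)$, find $\nabla\pi\in L^q_w(\Omega)$ with $\pi\in L^1_{\mathrm{loc}}(\bar\Omega)$ satisfying $\int_\Omega \nabla\pi\cdot\nabla\varphi\dx = \int_\Omega f\cdot\nabla\varphi\dx$ for every test function $\varphi\in C^\infty(\bar\Omega)$ of bounded support, together with the quantitative bound $\|\nabla\pi\|_{L^q_w(\Omega)}\le C\|f\|_{L^q_w(\Omega)}$. Setting $P_{q,w}f := f-\nabla\pi$ would then yield the bounded projection, and identifying $L^q_{\sigma,w}(\Omega)$ with $\{g\in L^q_w(\Omega): (g,\nabla\varphi)_\Omega=0\text{ for all admissible }\varphi\}$ gives the direct-sum decomposition in (i). Uniqueness of $\nabla\pi$ is the first thing I would verify, testing against approximations built in the dual weighted space $L^{q'}_{w'}(\Omega)$; here one uses the fundamental fact that $w'=w^{-1/(q-1)}\in \mathscr{A}_{q'}$ whenever $w\in \mathscr{A}_q$, which is precisely the Muckenhoupt condition itself.

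Existence is the heart of the argument and I would handle it by localization. On $\IR^n$, the projection is $(P_{\IR^n}f)_i = f_i - R_iR_jf_j$, where $R_j$ are Riesz transforms; these are bounded on $L^q_w(\IR^n)$ by \cite[Theorem 7.4.6]{GrafakosI}, producing a bounded projection with kernel $G^q_w(\IR^n)$. For the exterior domain, I would fix $R_0$ with $\IR^n\setminus\Omega\subset B_{R_0}$ and a cutoff $\eta$ equal to $1$ on $B_{R_0}$ and supported in $B_{R_0+1}$, and split $f = \eta f + (1-\eta)f$. The outer piece $(1-\eta)f$, extended by zero to $\IR^n$, is handled by $P_{\IR^n}$ to extract $\nabla\pi_1\in G^q_w(\IR^n)$. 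The inner piece $\eta f$ is supported in the bounded Lipschitz domain $\Omega_{R_0+1}:=\Omega\cap B_{R_0+1}$; since $w\in \mathscr{A}_q$ is bounded above and below by positive constants on $\overline{\Omega_{R_0+1}}$, the weighted norm is equivalent to the unweighted norm there, so the classical Helmholtz decomposition on the bounded domain provides $\nabla\pi_2$.

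To match the two local constructions on the annulus $D_{R_0}$ I would invoke the Bogovski\u{\i} operator $\mathbb B$ from \eqref{B}--\eqref{B-2} (subtracting integral means on $D_{R_0}$ where necessary to fulfil the compatibility condition) in order to correct divergence mismatches and boundary traces while preserving the weighted bound; on $\overline{D_{R_0}}$ the weight is again comparable to a constant, so $\mathbb B$ is $L^q_w$-bounded with compactly supported output. Summing the corrected $\nabla\pi_1$ and $\nabla\pi_2$ yields the sought global gradient. For part (ii) I would fix $f\in L^q_w(\Omega)$ and $g\in L^{q'}_{w'}(\Omega)$, decompose both, and exploit the density of $C^\infty_{0,\sigma}(\Omega)$ in $L^q_{\sigma,w}(\Omega)$ (verified using $w\in \mathscr{A}_q$ and mollification together with the Bogovski\u{\i} operator, via \eqref{Muu}--\eqref{rho-Mu}) to obtain $(P_{q,w}f,\nabla\sigma)_\Omega=0$ for $\nabla\sigma\in G^{q'}_{w'}(\Omega)$, and symmetrically $(\nabla\pi,P_{q',w'}g)_\Omega=0$. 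Combining these identities yields $(P_{q,w}f,g)_\Omega = (f,P_{q',w'}g)_\Omega$, so $(P_{q,w})^*=P_{q',w'}$; the dual identification of solenoidal subspaces then follows from $(L^q_w)^*=L^{q'}_{w'}$.

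The main obstacle is the gluing step: near $\partial\Omega$ the weight is benign and classical unweighted theory applies, while at spatial infinity the weighted Riesz transform bounds take over, but on the transitional annulus $D_{R_0}$ one must preserve simultaneously the divergence-free condition and the Neumann trace without picking up terms that blow up under the weight. This forces a careful bookkeeping of compatibility (zero-mean) conditions for $\mathbb B$, potentially requiring an additional splitting of the piece to be corrected into a zero-mean part and a harmonic correction supported far from $\partial\Omega$. A second delicate point is the density of $C^\infty_{0,\sigma}(\Omega)$ in $L^q_{\sigma,w}(\Omega)$, which is used in the duality step and itself relies on weighted mollifier estimates of the type \eqref{rho-Mu} combined with Bogovski\u{\i}'s operator to restore the divergence-free constraint after cutoff.
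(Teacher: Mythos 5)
The paper does not prove this statement; it is quoted verbatim from Farwig and Sohr \cite[Theorem 1.3]{FS}, so there is no in-paper proof to compare against. Your outline nevertheless matches the strategy of \cite{FS}: reformulate as a weighted weak Neumann problem, localize with a cutoff into a far-field part handled by the whole-space projection (bounded on $L^q_w(\IR^n)$ because Riesz transforms are Calder\'on--Zygmund operators and hence $\mathscr{A}_q$-bounded), and a compactly supported part near $\partial\Omega$ where $w\in\mathscr{A}_q$ is comparable to a constant so the classical bounded-domain Helmholtz decomposition applies; glue via Bogovski\u{\i}'s operator; and obtain (ii) by duality together with the symmetry $w\in\mathscr{A}_q\Leftrightarrow w'\in\mathscr{A}_{q'}$.

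Two corrections. First, a sign: on $\IR^n$ the Leray projector is $(P_{\IR^n}f)_i = f_i + R_iR_jf_j$, not $f_i - R_iR_jf_j$, since the symbol of $R_iR_j$ is $-\xi_i\xi_j/|\xi|^2$ and the projector has symbol $\delta_{ij}-\xi_i\xi_j/|\xi|^2$. Second, and more substantively, the variational identity $\int_\Omega\nabla\pi\cdot\nabla\varphi\dx=\int_\Omega f\cdot\nabla\varphi\dx$ is not coercive for $q\neq2$, so existence cannot come from a Lax--Milgram-type argument: one needs the a priori bound you state \emph{together with} the dual bound in $L^{q'}_{w'}(\Omega)$ and a closed-range (inf-sup) argument, and for the latter the test-function class $C^\infty(\bar\Omega)$ with bounded support must have gradients dense in $G^{q'}_{w'}(\Omega)$. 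That density is a genuine lemma in \cite{FS} (the same approximation machinery appears here via \cite[Corollary 4.3]{FS} in the proof of Proposition \ref{embed-weighted}) and is especially delicate in exterior domains; your proposal invokes it only in passing, but it is where most of the technical work in \cite{FS} is concentrated, and without it the uniqueness/existence chain you describe does not close.
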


\vspace{2ex}

Given the Helmholtz projection $P_{q,w}$, the Stokes operator $A_{q,w}$ is defined by 
$$
A_{q,w}=-P_{q,w}\Delta_{\Omega}\colon \D(A_{q,w}) = W^{2,q}_w(\Omega)\cap W^{1,q}_{0,w}(\Omega)\cap L^q_{\sigma,w}(\Omega)\subset L^q_{\sigma,w}(\Omega)\to L^q_{\sigma,w}(\Omega).
$$
 Since $A_{q,w}u = A_{q,\tilde w}u$ for $u\in \D(A_{q,w}) \cap \D(A_{q,\tilde w})$, a dense subset of $\D(A_q)$, we will also write $A$ for $A_{q,w};$ here $w,\tilde w$ are any weights in $\mathscr{A}_q$.  \BLACK

\vspace{1ex}

\begin{thm}\label{res-weighted} {\em \cite[Theorems 1.5 and 5.5]{FS}}
Let $n \geq 3$, $\Omega \subset \mathbb{R}^n$ be an exterior domain with boundary of class $C^2$, let $q \in (1, \infty)$ and $w\in \mathscr{A}_q$. 
\begin{enumerate}
\item[{\rm (i)}]
For $\lambda\in\Sigma_\omega$, $0<\omega<\pi$, satisfying $|\lambda|\geq\delta>0$ the Stokes resolvent problem
$\lambda u + A_{w,q} u =f$ 
has a unique solution $u\in {\D}(A_{w,q})$ satisfying the resolvent estimate
\begin{align}\label{equ:rse-w}
	\|\lambda u\|_{L^q_w(\Omega)}  + \|A_{w,q} u\|_{L^q_w(\Omega)}  \leq c_{\omega,\delta} \|f\|_{L^q_w(\Omega)}.
\end{align}
\item[{\rm (ii)}]
If $w(x)=(1+|x|)^\alpha$    
with $2q-n < \alpha < n(q-1)$, {\em i.e.} $w\in \mathscr{A}_q$, the constant $c_{\omega,\delta}$ can be replaced by $c_{\omega}$, a constant independent of $\delta$. 
\item[{\rm (iii)}]
Let $x_0 \in \mathbb{R}^n \setminus \overline\Omega\BLACK$ and 
$$
w^{s/q}|\cdot-x_0|^{-\gamma s} \in  \mathscr{A}_s, \BLACK  $$
where 
$\gamma=n\big(\frac{2}{n}+\frac{1}{s}-\frac{1}{q}\big) \geq 0 $ and $s \geq q$. Then 
the resolvent estimates 
\begin{align}\label{equ:rse-w2}
	\|\lambda u\|_{L^q_w(\Omega)}  +  |\lambda|^\frac12 \|\nabla u\|_{L^q_w(\Omega)}  +\|\nabla^2 u\|_{L^q_w(\Omega)}  \leq c \|(\lambda+A_{q,w})u\|_{L^q_w(\Omega)}
\end{align}
holds for $u\in\mathcal D(A_{w,q})$ and all $\lambda\in \Sigma_\omega$ uniformly.  
In particular, if  $w=\langle x\rangle^\alpha$ \BLACK and
$$
n\geq 3,  \ \ 2q-n < \alpha < n(q-1), $$
then \eqref{equ:rse-w2} is satisfied. 
\end{enumerate}
\end{thm}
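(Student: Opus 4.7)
\vspace{1ex}

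The plan is to combine the classical Muckenhoupt multiplier theory on $\IR^n$ with a localization/Bogovski\u{\i} scheme that reduces the exterior problem to a whole-space problem plus a bounded-domain problem, then to invoke a Fredholm-type argument to obtain existence and uniqueness together with the desired uniform estimates. I would proceed in roughly four stages.

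\medskip

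First, I would establish the analogue of (i)--(iii) on $\IR^n$. The whole-space Stokes resolvent can be written explicitly via Fourier multipliers of the form $\lambda(\lambda+|\xi|^2)^{-1}$, $\xi_j\xi_k(\lambda+|\xi|^2)^{-1}$ and $\xi_j\xi_k|\xi|^{-2}$, all of which, after restriction to the sector $\Sigma_\omega$, are regular Mihlin multipliers (scale-invariantly in $|\lambda|$). Since Calder\'on--Zygmund/Riesz-type operators are bounded on $L^q_w(\IR^n)$ for every $w\in\mathscr{A}_q$ by \cite[Thm.~7.4.6]{GrafakosI}, one obtains in one stroke the full estimate $|\lambda|\|u\|_{L^q_w} + |\lambda|^{1/2}\|\nabla u\|_{L^q_w} + \|\nabla^2 u\|_{L^q_w} \leq C\|(\lambda+A_{q,w})u\|_{L^q_w}$ uniformly in $\lambda \in \Sigma_\omega$. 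This already yields the whole-space versions of (i) and (iii); the constraint $\gamma = n(\tfrac{2}{n}+\tfrac{1}{s}-\tfrac{1}{q})$ in (iii) will appear naturally at the level of (ii) below as the scaling-critical exponent needed for a weighted Hardy--Sobolev inequality to control a term like $\|u\|_{L^q_w}$ by $\|\nabla^2 u\|$.

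\medskip

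Second, I would localize the problem on $\Omega$ by choosing $R>0$ large with $\IR^n\setminus\Omega\subset B_R$ and a cut-off $\phi\in C^\infty_0(B_{R+1})$ with $\phi\equiv 1$ on $B_R$. Writing a would-be solution as $u = \phi\, u_{\rm int} + (1-\phi)\, u_{\rm ext} - \mathbb{B}[\nabla\phi\cdot(u_{\rm ext}-u_{\rm int})]$, where $u_{\rm int}$ solves the resolvent problem on the bounded domain $\Omega_{R+1}=\Omega\cap B_{R+1}$ and $u_{\rm ext}$ solves the whole-space problem, one finds that this parametrix satisfies $(\lambda+A)u = f + Kf$ with a remainder operator $Kf$ whose output is supported in the annulus $D_R$ and involves only commutator terms of type $[\Delta,\phi]$ and the Bogovski\u{\i} correction. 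The bounded-domain ingredient is covered by \cite{DKS}; since $w\in\mathscr{A}_q$ is comparable to a bounded constant on $\Omega_{R+1}$, the weighted estimates on $\Omega_{R+1}$ follow from their unweighted counterparts. The mapping properties of $\mathbb{B}$ in \eqref{B}, \eqref{B-2} are used precisely because the weight in $L^q_w(D_R)$ is uniformly bounded from above and below.

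\medskip

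Third, from the supports and the extra half derivative gained from $|\lambda|^{1/2}\|\nabla u\|$, the remainder $K=K(\lambda)$ is compact on $L^q_{\sigma,w}(\Omega)$ for each fixed $\lambda$ (this uses the compact embedding $W^{1,q}(D_R)\hookrightarrow L^q(D_R)$ and the fact that Muckenhoupt weights are locally bounded away from $0$ and $\infty$). Moreover, as $|\lambda|\to\infty$ in $\Sigma_\omega$, the operator norm of $K(\lambda)$ tends to zero because each commutator term carries a factor $|\lambda|^{-1/2}$ from the whole-space estimate. Thus for $|\lambda|\geq\delta_0$ sufficiently large, $I+K(\lambda)$ is invertible by Neumann series, giving (i) for large $\lambda$ together with the stated resolvent bound. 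For $\delta\leq|\lambda|\leq\delta_0$, I would appeal to the analytic Fredholm theorem: $I+K(\lambda)$ is invertible provided it is injective, and injectivity follows from uniqueness for the Stokes resolvent problem on $\Omega$, which one reduces via $L^q_w\hookrightarrow L^q_{\rm loc}$ and duality to the unweighted $L^2$ uniqueness result. This completes (i) with the constant $c_{\omega,\delta}$ possibly deteriorating as $\delta\to 0$.

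\medskip

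Fourth, the passage from (i) to (ii) and (iii) is where the specific structure of $w=\langle x\rangle^\alpha$ enters. The loss of uniformity as $\lambda\to 0$ comes entirely from the zero-order term $\|u\|_{L^q_w}$ on the left-hand side. To remove this, I would estimate $\|u\|_{L^q_w}$ by $\|\nabla^2 u\|_{L^q_w}$ via a weighted Hardy--Sobolev inequality: under the assumption $2q-n<\alpha<n(q-1)$, equivalently $\tfrac{n}{q}-2<s<\tfrac{n}{q'}$ in an appropriate rescaling, two applications of the weighted Sobolev embedding of Proposition~\ref{embed-weighted} (applied to $u$ and $\nabla u$) give a scale-invariant Hardy-type bound that closes the a priori estimate uniformly down to $\lambda=0$. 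The sharper two-weight formulation in (iii), with the condition $w^{s/q}|{\cdot}-x_0|^{-\gamma s}\in\mathscr{A}_s$, is precisely the condition that makes the chain of weighted Sobolev embeddings from $W^{2,q}_w$ into $L^q_w$ work with the Hardy factor $|x-x_0|^{-\gamma}$, and the exponent $\gamma=n(\tfrac{2}{n}+\tfrac{1}{s}-\tfrac{1}{q})$ is dictated by scaling. The main obstacle throughout is managing the low-frequency behavior when $|\lambda|\to 0$: injectivity alone is too weak to yield a uniform constant, and one really needs the weighted Hardy--Sobolev structure together with the decay afforded by $\alpha>2q-n$ to eliminate the zero-order term on the left-hand side without producing any loss.
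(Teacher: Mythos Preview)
The paper does not contain a proof of this theorem at all: it is stated with the attribution ``\cite[Theorems 1.5 and 5.5]{FS}'' in the theorem header and is used as a black-box input from Farwig--Sohr. There is therefore no ``paper's own proof'' to compare your proposal against; the authors simply quote the result and move on to a remark about the weight condition \eqref{crucial-restriction}.

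That said, your outline is broadly in the spirit of the original Farwig--Sohr argument: whole-space multiplier estimates in $L^q_w$ via Calder\'on--Zygmund theory, a cut-off/Bogovski\u{\i} parametrix to localize, compactness of the remainder on the annulus, and analytic Fredholm for $|\lambda|$ bounded away from zero. The one place where your sketch is somewhat vague is the passage (ii)/(iii): you invoke ``two applications of Proposition~\ref{embed-weighted}'' to absorb $\|u\|_{L^q_w}$ into $\|\nabla^2 u\|_{L^q_w}$, but Proposition~\ref{embed-weighted} as stated in this paper requires a zero boundary trace and changes the integrability exponent from $q$ to $q^*$, so it does not by itself return you to $L^q_w$. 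In the Farwig--Sohr argument the uniform-in-$\lambda$ control near $\lambda=0$ is obtained via a genuine two-weight Hardy/Sobolev inequality of the form $\|\,|x-x_0|^{-\gamma}u\|_{L^s_{w^{s/q}}} \lesssim \|\nabla^2 u\|_{L^q_w}$, which is exactly what the hypothesis $w^{s/q}|\cdot-x_0|^{-\gamma s}\in\mathscr{A}_s$ with $\gamma=n(\tfrac{2}{n}+\tfrac{1}{s}-\tfrac{1}{q})$ encodes; the point $x_0\in\IR^n\setminus\overline\Omega$ ensures the singular weight is harmless on $\Omega$. If you want a self-contained proof rather than a citation, you should replace the appeal to Proposition~\ref{embed-weighted} by this sharper two-weight embedding and explain how it closes the estimate uniformly as $\lambda\to 0$.
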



\begin{rem}
We apply Theorem \ref{res-weighted}  (ii)
with both $w=(\sqrt{1+|x|^2})^{s q}$ and $w=(1+|x|)^{s q}$ defining  equivalent norms on $L^q_w(\IR^n)$.
Hence we need that
\begin{align}\label{crucial-restriction}
n \geq 3,  \ \ 
2-\frac{n}{q} < s < n\Big(1-\frac{1}{q}\Big) = \frac{n}{q'}. 
\end{align}
\end{rem}

\section{$L^p$-$L^q$ estimates of the Stokes semigroup in weighted spaces}

The following two-weight $L^p$-$L^q$ estimates of the Stokes semigroup are the crucial results in this section,  using the weights $w(x)=\langle x\rangle^{s_0}$ and $w(x)=\langle x\rangle^{s}$ where $-\frac{n}{q} < s_0\leq s < n(1-\frac{1}{p})=\frac{n}{p'}$, $p\leq q$, and $s\geq 0$.  Only for the whole space results in Proposition \ref{whole-est-basis} the restriction $s\geq 0$ is unnecessary.  All results in Sect. 3 hold for $n \geq 3$. \BLACK



\begin{thm}\label{Stokes-w} 
Let $\Omega \subset\IR^n$, $n \geq 3$, be an exterior domain and $1< p \leq q <\infty$. Further assume $-\frac{n}{q} \leq s_0 \leq s < \frac{n}{p'}$ and $s\geq 0$.

Then for $u_0 \in L^p_{\sigma,s}(\Omega)$ the following estimates hold: For $t>1$, 
\begin{equation}
\big\|e^{-t A} u_0\big\|_{L^q_{s_0}(\Omega) } \leq C t^{-\frac{n}{2}\big(\frac{1}{p}-\frac{1}{q}\big)}t^{-\frac{s-s_0}{2} }\|u_0\|_{L^p_s(\Omega) }, \label{etA>1}\end{equation}
and, if additionally $s_0 \geq 0$,
\begin{equation}
\big\|\nabla  e^{-t A} u_0\big\|_{L^q_{s_0}(\Omega)} \leq C \Big(t^{-\frac{n}{2}\big(\frac{1}{p}-\frac{1}{q}\big)-\frac{1}{2}}t^{-\frac{s-s_0}{2}} +t^{-\frac{n}{2p}}t^{-\frac{s}{2} }\Big)\|u_0\big\|_{L^p_s(\Omega)}.\label{n-etA>}
\end{equation}
\medskip

%
For $t<2$ and any multi-index $\alpha\in\IN_0$ with $|\alpha|=0,1$  such that $\frac{n}{2}\big(\frac{1}{p}-\frac{1}{q}\big)+\frac{|\alpha|}{2}<1$ it holds that 
\begin{equation}\label{etA<}
\big\|\nabla^{\alpha} e^{-tA} u_0 \big\|_{L^q_{s_0}(\Omega) } \leq C t^{-\frac{n}{2}\big(\frac{1}{p}-\frac{1}{q}\big)-\frac{|\alpha|}{2}}\|u_0\|_{L^p_s(\Omega)}.   
\end{equation} 
\end{thm}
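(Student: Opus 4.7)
The plan is to adapt the Iwashita--Kobayashi--Shibata cut-off scheme to the weighted setting, combining three ingredients: (i) weighted $L^p$--$L^q$ decay estimates for the Stokes semigroup on $\mathbb{R}^n$ (the whole space case, which I assume to be established as Proposition \ref{whole-est-basis}); (ii) the unweighted exterior-domain bounds of Theorem \ref{Stokes-est-exterior}; and (iii) the weighted resolvent bounds of Theorem \ref{res-weighted}(ii) (legitimate since $0\le s<n/p'$ implies $\langle x\rangle^{sp}\in\mathscr A_p$ via \eqref{crucial-restriction}). I would fix $R>0$ so that $\mathbb R^n\setminus\Omega\subset B_R$ and a cut-off $\phi\in C^\infty_0(B_{R+2})$ with $\phi\equiv 1$ on $B_{R+1}$.

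The short-time estimate \eqref{etA<} would come from the Dunford integral representation $e^{-tA}=\frac1{2\pi i}\int_{\Gamma_{t,\theta}}e^{-\lambda t}(\lambda+A)^{-1}d\lambda$ with the contour $\Gamma_{t,\theta}$ of Remark \ref{01t}, which for $0<t<2$ can be chosen of size bounded away from $0$. Integration of the weighted bound \eqref{equ:rse-w2} over the contour gives $L^q_s$--$L^q_s$ parabolic regularity with the right power of $t$, and the semigroup identity $e^{-tA}=e^{-(t/2)A}e^{-(t/2)A}$ lets me combine this with the unweighted $L^p$--$L^q$ mapping of Theorem \ref{Stokes-est-exterior} (on compact sets the weights are equivalent to $1$) to cover $L^p_s\to L^q_{s_0}$; the containment $\langle x\rangle^{s_0}\le\langle x\rangle^s$ for $s\ge s_0\ge0$ finishes it. For the long-time estimates \eqref{etA>1}--\eqref{n-etA>}, I would write $u(t)=e^{-tA}u_0$ and introduce
\begin{equation*}
v(t)\;=\;(1-\phi)\,u(t)\;-\;\mathbb B\big[(\nabla\phi)\cdot u(t)\big],
\end{equation*}
which, thanks to \eqref{B}, is divergence-free and extends by zero across $\partial\Omega$ to a vector field on $\mathbb R^n$. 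Differentiating in $t$, $v$ solves a whole-space Stokes Cauchy problem with a source term supported on the annulus $\{R<|x|<R+2\}$ and involving $u$ and $\nabla u$. Via Duhamel together with the weighted whole-space $L^p$--$L^q$ decay applied to $v$, and with the unweighted estimates of Theorem \ref{Stokes-est-exterior} plus the boundedness of $\langle x\rangle^s$ on $\mathrm{supp}\,\phi$ applied to $\phi u$, I obtain a closed integral inequality whose dyadic-in-time iteration produces the improved rate $t^{-(s-s_0)/2}$.

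The main obstacle is propagating this extra weighted decay through the Duhamel iteration: the source term from the annulus is naturally controlled only in unweighted norms, so one has to trade the localization of $\nabla\phi$ (where $\langle x\rangle^s$ is bounded both above and below) against the whole-space weighted $L^p$--$L^q$ rate to recover the factor $t^{-(s-s_0)/2}$ at each step of the induction. A secondary subtlety is the appearance of the lower-order term $t^{-n/(2p)}t^{-s/2}$ in \eqref{n-etA>}: as in $\eqref{e-tAt}_2$, it arises because the resolvent bound on $\nabla u$ degrades for small $|\lambda|$ (equivalently, large $t$) unless $p<n/2$, and it must be tracked separately through the Bogovski\u{\i} correction, where $\mathbb B[(\nabla\phi)\cdot u]$ only gains one derivative over $u$ itself.
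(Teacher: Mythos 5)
Your framework (cut-off to $\IR^n$, Duhamel, weighted whole-space decay) is in the same family as the paper's, but the key ideas that make it close are missing, and as written the Duhamel step fails. Writing $v(t)=(1-\phi)u(t)+\mathbb{B}[(\nabla\phi)\cdot u(t)]$ (the sign must be $+$ for $v$ to be solenoidal, since $\div((1-\phi)u)=-(\nabla\phi)\cdot u$), the source in the whole-space Cauchy problem involves $\nabla u(\tau)$ and $\pi(\tau)$ on the annulus. With only Theorem~\ref{Stokes-est-exterior} available, the best you can say about $\|\nabla u(\tau)\|_{L^p(\textrm{annulus})}$ for large $\tau$ is a rate $\tau^{-1/2}$ (from $\eqref{e-tAt}_2$ with $p=q<n$), which is \emph{not} integrable near $\tau=\infty$: the convolution $\int_0^t(t-\tau)^{-a}\tau^{-1/2}\,\dtau$ with $a=\frac n2(\frac1p-\frac1q)+\frac{s-s_0}2$ scales like $t^{1/2-a}$, half a power worse than the target $t^{-a}$. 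Dyadic iteration cannot repair this, because the iteration never improves the local decay of $\nabla u$ near $\partial\Omega$. What actually makes the argument close is the Iwashita local energy decay estimate $\|e^{-tA}u_0\|_{L^q(\Omega_R)}\le Ct^{-n/2}\|u_0\|_{L^q(\Omega)}$ of Proposition~\ref{interior-est-base}, a genuine exterior-domain input coming from the low-frequency parametrix expansion of the resolvent and not a consequence of any $L^p$-$L^q$ smoothing bound; and the gradient analogue \eqref{local-decay-k} holds only for \emph{compactly supported} data. That is precisely why the paper performs a \emph{double} cut-off: first extend $u_0$ to $\IR^n$, solve the heat equation there, truncate back to get $v$ on $\Omega$, form $\mathcal U=u-v$ (which has compactly supported initial datum $\mathcal U_0$ and compactly supported source $F$, so that \eqref{local-decay-k} applies and gives the $\tau^{-n/2}$ rate for $\nabla\mathcal U$ on the annulus), and only then cut $\mathcal U$ out to $\IR^n$ as $\mathscr U$. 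That gives the whole-space source $G$ the integrable decay $\tau^{-n/(2p)-s/2}$ of Lemma~\ref{est-G1}, and Duhamel closes. Your single cut-off of $u$ has no analogue of $\mathcal U$ and therefore no access to this rate.

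Two further points. Your short-time argument is also incomplete: ``on compact sets the weights are equivalent to $1$'' does not produce $e^{-(t/2)A}: L^p_s(\Omega)\to L^q_s(\Omega)$ on the whole unbounded exterior domain, which is essentially the content of \eqref{etA<} itself; the paper instead uses an extension operator to $\IR^n$, the Riesz-potential representation of $(-\Delta)^{-\beta}$ (bounded on weighted spaces) together with the moment inequality, and the weighted resolvent estimate \eqref{equ:rse-w2}, with no composition with Theorem~\ref{Stokes-est-exterior}. Finally, the annular pressure terms $(\nabla\varphi^R)\pi$ and $\widehat{\mathbb{B}}[(\nabla\varphi^R)\cdot\nabla\pi]$ need a quantitative local estimate for small times, supplied by Lemma~\ref{pressure-est-crucial}, which your outline does not address.
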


\vspace{1ex}

To prove Theorem \ref{Stokes-w} we start with the following two propositions.  Recall that $R>0$ is chosen such that $\IR^n\setminus\Omega\subset B_R$. 

\vspace{1ex}

\begin{prop}\label{interior-est-base}
Let $n \geq 3$ and $1<q<\infty$. There exists a positive constant 
$C_{q,R}$ such that 
\begin{align}\label{local-decay}  
\|e^{-t A}u_0\|_{L^q(\Omega_{R})} & \leq 
    C_{q,R}t^{-\frac{n}{2}}\|u_0\|_{L^q(\Omega)}, \quad t>1,\\
\|\del_t^j \nabla^\alpha e^{-t A}u\|_{L^q(\Omega_{R})} & \leq 
    C_{q,R}t^{-\frac{n}{2}-j}\|u_0\|_{L^q(\Omega)}, \quad t>1, \label{local-decay-k}
\end{align}
for 
$u_0 \in L^q_{\sigma}(\Omega)$ in \eqref{local-decay} and for $u_0 \in L^q_{\sigma,R}(\Omega) = \{v\in L^q_{\sigma}(\Omega): \supp v \subset B_R\}$  in \eqref{local-decay-k}; here $\Omega_{R} = \Omega\cap B_R$, $j=0,1$ and $|\alpha|=0,1,2$.
\end{prop}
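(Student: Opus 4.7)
The plan divides naturally into the two estimates. For the local energy decay \eqref{local-decay}, I would invoke the classical local energy decay of the Stokes semigroup in exterior domains established by Iwashita and Kobayashi--Shibata (\cite{DKS, Iwashita, Shi-monogr} cited just above). Its standard derivation rests on the Dunford integral representation $e^{-tA}u_0 = \frac{1}{2\pi i}\int_\Gamma e^{\lambda t}(\lambda-A)^{-1} u_0\,\dlambda$ with the contour $\Gamma$ deformed close to the non-negative real axis, combined with the low-frequency asymptotic analysis of the localized resolvent $(\lambda - A)^{-1}\colon L^q_\sigma(\Omega)\to L^q(\Omega_R)$ near $\lambda = 0$. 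The expansion there, integrated against $e^{\lambda t}$, produces the rate $t^{-n/2}$ (the restriction $n\geq 3$ is what excludes a genuine obstruction at $\lambda=0$). Since this matches the content of the cited references exactly, I would simply invoke it.

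For \eqref{local-decay-k} I would bootstrap from \eqref{local-decay} using analytic-semigroup splitting and local Stokes regularity. To handle the time derivative, for $t>2$ I split $e^{-tA}=e^{-(t/2)A}\circ e^{-(t/2)A}$ and write
\[
\partial_t^j e^{-tA}u_0 \;=\; e^{-(t/2)A}\bigl[(-A)^j e^{-(t/2)A}u_0\bigr].
\]
Analyticity of the Stokes semigroup gives $\|A^j e^{-sA}\|_{\mathcal{L}(L^q_\sigma)}\leq C s^{-j}$, so the bracketed vector field is bounded in $L^q(\Omega)$ by $C t^{-j}\|u_0\|_{L^q}$. Applying \eqref{local-decay} to this $L^q_\sigma$-datum then yields $\|\partial_t^j e^{-tA}u_0\|_{L^q(\Omega_R)}\leq C_{q,R}\, t^{-n/2-j}\|u_0\|_{L^q}$; the transitional range $1<t\leq 2$ is covered directly by the analyticity/smoothing of $e^{-tA}$ together with boundedness of the semigroup.

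To introduce spatial derivatives, I observe that $w(t):=\partial_t^j e^{-tA}u_0$ satisfies an instationary Stokes system $\partial_t w-\Delta w+\nabla \pi=0$, $\div w=0$ in $\Omega$, with $w|_{\partial\Omega}=0$ (inherited from $e^{-tA}u_0\in\mathcal{D}(A)$ for $t>0$). A standard local (interior plus boundary) $W^{2,q}$-regularity estimate on $\Omega_R\subset\Omega_{R+1}$ then gives
\[
\|\nabla^2 w\|_{L^q(\Omega_R)} \leq C\bigl(\|w\|_{L^q(\Omega_{R+1})} + \|\partial_t w\|_{L^q(\Omega_{R+1})}\bigr),
\]
and combining with the previous step applied on $\Omega_{R+1}$ yields the stated rate $t^{-n/2-j}$ for $|\alpha|=2$. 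The case $|\alpha|=1$ then follows by interpolation between $L^q(\Omega_R)$ and $W^{2,q}(\Omega_R)$.

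The central difficulty is clearly \eqref{local-decay}, relying on the Iwashita-type low-frequency resolvent analysis in exterior domains; once it is available, the derivative estimates of \eqref{local-decay-k} reduce to the analytic-semigroup identity $\partial_t^j e^{-tA}=(-A)^j e^{-tA}$ and standard local Stokes regularity, both of which are essentially routine.
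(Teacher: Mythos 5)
Your handling of \eqref{local-decay} coincides with the paper: both simply invoke the Iwashita/Dan--Kobayashi--Shibata local energy decay, obtained via the low-frequency expansion of the localized resolvent. The paper does the same for \eqref{local-decay-k}, citing \cite[Theorem 9.50]{Shi-monogr} and the parametrix argument of \cite{Shibata-Shimizu}, whereas you try to \emph{bootstrap} \eqref{local-decay-k} from \eqref{local-decay} by analyticity plus local Stokes regularity. This bootstrap has a genuine gap, and it is worth pinpointing where: your claimed local regularity estimate
$\|\nabla^2 w\|_{L^q(\Omega_R)} \leq C\big(\|w\|_{L^q(\Omega_{R+1})} + \|\partial_t w\|_{L^q(\Omega_{R+1})}\big)$
is not a valid Stokes estimate. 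Any Caccioppoli-type bound for the Stokes system on $\Omega_R\subset\Omega_{R+1}$ necessarily carries terms $\|\nabla w\|_{L^q(\Omega_{R+1})}$ and, more seriously, a pressure term $\|\pi-c\|_{L^q(\Omega_{R+1})}$ coming from the commutator $\pi\nabla\chi$ when one cuts off. The gradient term can be absorbed by interpolation and nested annuli, but the pressure cannot: bounding $\|\pi-c\|_{L^q(\Omega_{R+1})}$ by Poincar\'e leads back to $\|\nabla\pi\|_{L^q(\Omega_{R+1})}$, and since $\nabla\pi=-\partial_t w+\Delta w$, one re-introduces $\|\nabla^2 w\|_{L^q}$ on the annulus — the argument is circular. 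The pressure in an exterior Stokes problem is genuinely nonlocal and its local size cannot be read off from the velocity alone; this is precisely why the literature proves the derivative decay directly from the low-frequency parametrix of $(\lambda-A)^{-1}$ rather than by local elliptic regularity.

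There is a second, related warning sign: your argument nowhere uses the hypothesis $u_0\in L^q_{\sigma,R}(\Omega)$, i.e.\ that $\supp u_0\subset B_R$, which the proposition imposes on \eqref{local-decay-k} but \emph{not} on \eqref{local-decay}. If your bootstrap were correct, \eqref{local-decay-k} would hold for arbitrary $u_0\in L^q_\sigma(\Omega)$, and then the paper's Proposition~\ref{interior-domain2} — whose whole point is to extend the local gradient decay to non-compactly-supported data via cut-offs, the Bogovski\u{\i} operator, and a Duhamel iteration — would be superfluous, yet the authors go to considerable length to prove it. The compact support enters in the parametrix approach because the low-frequency expansion of $(\lambda-A)^{-1}$ is estimated in $\mathcal L\big(L^q_{\sigma,R}(\Omega),\,W^{2,q}(\Omega_R)\big)$; dropping the compact support changes the low-frequency structure and degrades the rate. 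So the correct conclusion is that both \eqref{local-decay} and \eqref{local-decay-k} are inputs from the Iwashita/Shibata resolvent expansion, and \eqref{local-decay-k} is not routinely reducible to \eqref{local-decay}.
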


\vspace{1ex}

See {\em e.g.} Iwashita (\cite[Theorem 1.2]{Iwashita}) for the proof of \eqref{local-decay} and 
the monograph 
of Shibata \cite[Theorem 9.50]{Shi-monogr}.  Actually, \cite[Theorem 9.50]{Shi-monogr} is proved by a similar argument as used by Shibata and Shimizu \cite[Theorem 1.1]{Shibata-Shimizu} for \eqref{local-decay-k} dealing with the Neumann boundary condition. 
 A crucial idea is the expansion formula of the resolvent near the origin, \cite[(9.253)]{Shi-monogr}, based on the parametrix of the problem, \cite[(9.202)]{Shi-monogr}. 
The argument follows a similar discussion in \cite[Theorem 5.1]{Shibata-Shimizu} based on the parametrix  technique \cite[(5.18)]{Shibata-Shimizu}.
More precisely, \cite{Shibata-Shimizu} needs a 
detailed analysis of perturbation terms, $M(\lambda)$, in $\Omega_{R+1}$ in \cite[(5.4)]{Shibata-Shimizu} for the Navier-boundary condition which is not needed for the non-slip boundary condition.
The result by Iwashita yields the decay $t^{-\frac{n}{2q}}\|u\|_{L^q(\Omega)}$, but we note that $t^{-\frac{n}{2}} \leq t^{-\frac{n}{2q}}$ for $t>1$. The same restriction in \eqref{local-decay} applies to \cite[Theorem 1.1.1]{DKS} by Dan, Kubo and Shibata;  concerning \eqref{local-decay-k} these authors consider the cases $n=2$ and $n=3$ in \cite[Chapters 2 and 1]{DKS}. 
Detailed proofs of \eqref{local-decay-k} for any $n \geq 3$ are given in \cite{Shi-monogr}. Finally,  Enomoto and Shibata  \cite{EnoS} study the local decay estimate for the Oseen semigroup and $n \geq 3$. \\
\BLACK

In the following, let  $P_0$ and $A_0=-P_0\Delta$ denote the Helmholtz projection and the Stokes operator  on $\IR^n$, respectively, herewith omitting the exponents $q$ of integrability and $s$ of the weight for both $P_0$ and $A_0$.
\vspace{1ex}

\begin{prop}\label{whole-est-basis}
    Let $n \geq 3$ and $1 < p \leq q < \infty$. 
    Suppose $-\frac{n}{q} < s_0 \leq s < \frac{n}{p'}$. 
    Then there holds for $u_0\in L^p_{\sigma,s} (\IR^n)$ and any multi-index $\alpha\in\mathbb N_0^n$ \BLACK that 
\begin{align}\label{whole-pq}
\|\nabla^\alpha e^{-t A_0}u_0\|_{L^q_{s_0}(\IR^n)} 
\leq C t^{-\frac{n}{2}\big(\frac{1}{p}-\frac{1}{q}\big)-\frac{|\alpha|}{2}}(1+t)^{-\frac{s-s_0}{2}}\|u_0\|_{ L^p_{s}(\IR^n)}
    \end{align}
uniformly in $t>0$.    
\end{prop}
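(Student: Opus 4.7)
The plan is to reduce the Stokes semigroup on $\IR^n$ to the heat semigroup and then handle the short- and long-time regimes by two distinct strategies: a Peetre-plus-Young argument for $0<t\leq 1$ and a pointwise H\"older analysis for $t\geq 1$.

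On $\IR^n$ the Helmholtz projection $P_0$ commutes with $-\Delta$ on solenoidal vector fields, so $e^{-tA_0}u_0 = e^{t\Delta}u_0 = G_t*u_0$ with the Gaussian kernel $G_t$, and one has the standard bounds $|\nabla^\alpha G_t(z)|\leq C_\alpha t^{-(n+|\alpha|)/2}e^{-c|z|^2/t}$. For $0<t\leq 1$ the factor $(1+t)^{-(s-s_0)/2}$ is bounded, so I would apply Peetre's inequality $\langle x\rangle^{s_0}\leq C\langle x-y\rangle^{|s_0|}\langle y\rangle^{s_0}$ together with $\langle y\rangle^{s_0}\leq\langle y\rangle^s$ (valid since $s_0\leq s$ and $\langle y\rangle\geq 1$) to obtain the pointwise majorization
\[
\langle x\rangle^{s_0}\bigl|\nabla^\alpha G_t*u_0(x)\bigr|\leq C(K_t*v)(x),\quad K_t(z)=\langle z\rangle^{|s_0|}|\nabla^\alpha G_t(z)|,\ v(y)=\langle y\rangle^s|u_0(y)|.
\]
Young's convolution inequality with $\frac1r=1+\frac1q-\frac1p$ then reduces the claim to the scaling bound $\|K_t\|_{L^r}\leq Ct^{-\frac{n}{2}(\frac1p-\frac1q)-\frac{|\alpha|}{2}}$ for $t\leq 1$, which follows directly by substitution.

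For $t\geq 1$ the extra factor $(1+t)^{-(s-s_0)/2}$ must be extracted from the additional weight $\langle y\rangle^{s-s_0}$ on $u_0$. Writing $u_0=\langle y\rangle^{-s}v$ with $v\in L^p$ and applying H\"older in $y$ with exponents $(p,p')$,
\[
|\nabla^\alpha G_t*u_0(x)|\leq\bigg(\int|\nabla^\alpha G_t(x-y)|^{p'}\langle y\rangle^{-sp'}\,dy\bigg)^{1/p'}\|v\|_{L^p}.
\]
The hypothesis $s<n/p'$ ensures integrability of the inner integral; after substitution $y=x+\sqrt{t}\,z$ and a case analysis on $|x|$, this yields the pointwise bound
\[
|\nabla^\alpha G_t*u_0(x)|\leq Ct^{-\frac{n}{2p}-\frac{|\alpha|}{2}}\max(\sqrt{t},\langle x\rangle)^{-s}\|u_0\|_{L^p_s}.
\]
Multiplying by $\langle x\rangle^{s_0}$ and taking the $L^q$-norm of the resulting weight function $\Phi(x)=\langle x\rangle^{s_0}\max(\sqrt{t},\langle x\rangle)^{-s}$, splitting the integration into $|x|\leq\sqrt{t}$ and $|x|>\sqrt{t}$, one obtains $\|\Phi\|_{L^q}\leq Ct^{n/(2q)-(s-s_0)/2}$ whenever the outer integral converges, i.e.\ when $s-s_0>n/q$. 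The remaining subcritical regime $0\leq s-s_0\leq n/q$ is then covered by complex interpolation of weighted $L^p$-spaces (using $[L^p_{s_0},L^p_{s_1}]_\theta=L^p_{(1-\theta)s_0+\theta s_1}$) between the trivial endpoint $s=s_0$ (uniform $L^p_{s_0}\to L^q_{s_0}$ boundedness via the Hardy-Littlewood maximal operator on Muckenhoupt spaces) and the supercritical endpoint just established.

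The main obstacle will be the long-time case: the direct pointwise H\"older approach yields the sharp decay only when $s-s_0>n/q$, so covering the full parameter range requires an interpolation argument, whose validity rests on the Muckenhoupt conditions $-n/q<s_0\leq s<n/p'$ that ensure boundedness of the weighted maximal and Riesz operators at each endpoint.
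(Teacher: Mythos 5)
Your short-time argument ($0<t\leq 1$) via Peetre's inequality and Young's convolution inequality is sound, and your reduction $e^{-tA_0}=e^{t\Delta}$ on solenoidal fields matches the paper. The gap is in the long-time case.

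For $t\geq 1$ the pointwise H\"older bound
\[
|\nabla^\alpha G_t*u_0(x)|\;\leq\;C\,t^{-\frac{n}{2p}-\frac{|\alpha|}{2}}\max(\sqrt{t},\langle x\rangle)^{-s}\,\|u_0\|_{L^p_s}
\]
is correct, but, as you observe, taking the weighted $L^q_{s_0}$ norm over the outer region $|x|>\sqrt{t}$ converges only when $s-s_0>n/q$. You propose to close the remaining regime $0\leq s-s_0\leq n/q$ by interpolating the \emph{source} space between the endpoint $L^p_{s_0}\to L^q_{s_0}$ and a supercritical endpoint $L^p_{s_1}\to L^q_{s_0}$ with $s_1>s_0+n/q$. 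But such an $s_1$ must also satisfy $s_1<\frac{n}{p'}$ for your pointwise H\"older estimate to apply, and the window $(s_0+\frac{n}{q},\frac{n}{p'})$ is nonempty only when $s_0<\frac{n}{p'}-\frac{n}{q}=n\bigl(1-\frac1p-\frac1q\bigr)$. This excludes a large and relevant part of the parameter range. For instance, with $n=3$, $p=q=2$ one has $\frac{n}{p'}-\frac{n}{q}=0$, so no supercritical endpoint exists whenever $s_0\geq 0$ — which is exactly the regime Theorem \ref{Stokes-w} requires. In that case your interpolation has nothing to interpolate against, and the proof does not go through.

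The paper avoids this by \emph{not} estimating the full operator directly. It factors $e^{t\Delta}=e^{t\Delta/2}\circ e^{t\Delta/2}$ and treats the weight change and the integrability change separately. For the weight change $e^{t\Delta/2}:L^p_s\to L^p_{s_0}$ (same $p$) it first proves $L^p_s\to L^p_0$ for the full range $0\leq s<\frac{n}{p'}$ by a \emph{norm-based} argument — H\"older plus the Hardy--Littlewood--Sobolev (O'Neil) convolution inequality in Lorentz spaces, exploiting $\langle y\rangle^{-s}\in L^{n/s,\infty}$ — which has no supercritical restriction; it then interpolates the \emph{target} between $L^p_0$ and $L^p_s$, whose endpoints always lie in the admissible range. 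For the integrability change $e^{t\Delta/2}:L^p_{s_0}\to L^q_{s_0}$ it invokes the Muckenhoupt--Wheeden weighted fractional-integral estimate. Finally, a minor but real imprecision in your outline: for $p<q$ the ``trivial'' endpoint $L^p_{s_0}\to L^q_{s_0}$ does not follow from boundedness of the Hardy--Littlewood maximal operator, which preserves the integrability index; one needs the weighted fractional integral (or fractional maximal) operator estimate, which is precisely the Muckenhoupt--Wheeden theorem the paper cites. Your approach could be repaired by adopting the paper's Lorentz-space argument for the weight change and switching to target interpolation, but as written it does not cover the stated hypotheses.
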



{\rm 
\begin{rem}
It is well known that when $s=s_0=0$, there holds for any $\alpha\in\IN_0$ that 
\begin{align}\label{whole-pq2}
\|\nabla^\alpha e^{-t A_0}u_0\|_{L^q(\IR^n)} 
\leq C t^{-\frac{n}{2}\big(\frac{1}{p}-\frac{1}{q}\big)-\frac{|\alpha|}{2}}\|u_0\|_{ L^p(\IR^n)}
    \end{align}
uniformly in $t>0$ with $1 \leq  p \leq q \leq \infty$ and $p\neq \infty, q \neq 1$. 
\end{rem}
}

\vspace{1ex}

Since the proofs of Proposition \ref{whole-est-basis} in \cite[Theorem 4]{Kobayashi-Kubo}  and \cite[Lemma 5.1]{Kobayashi-Kubo2} 
lack some arguments, we give a rigorous proof for the convenience of the reader. Let $E_t(x)=(4\pi t)^{-n/2} e^{-|x|^2/(4t)}$\ denote the heat kernel on $\IR^n$. We note that on $L^p_\sigma(\IR^n)$ and corresponding weighted spaces of solenoidal vector fields the Stokes semigroup $e^{-tA_0}$ equals the heat semigroup $e^{t\Delta}$, {\em i.e.},  
\begin{equation}\label{E_t}
e ^{-tA_0} u_0(x) = E_t*u_0(x) = \int_{\IR^n} \frac{1}{(4\pi t)^{n/2}} e^{-\frac{|x-y|^2}{4t}} u_0(y)\dy ,\quad u_0\in L^p_\sigma(\IR^n) .
\end{equation}  
Hence it suffices to consider in \eqref{E_t} and \eqref{whole-pq} the semigroup $\{e^{t\Delta}\}_{t\geq 0}$ directly on $L^p_s(\IR^n)$.

\begin{proof}[Proof of Proposition \ref{whole-est-basis}] 
{\em  Step 1:}  Let $p=q$, $0\leq s<\frac{n}{p'}$ and $s_0=0$. \BLACK To prove that $e^{t\Delta}=E_t(\cdot)*$ maps $L^p_s$  into $L^p_0=L^p$ with norm bounded by $c(1+t)^{-s/2}$ we differ between the cases $0<t<1$ and $t\geq 1$. For $0<t<1$ and $u_0\in L^p_s$ there holds
$$ \|e^{t\Delta} u_0\|_{L^p} \leq \|E_t(\cdot)\|_{L^1} \|u_0\|_{L^p} = \|u_0\|_{L^p} \leq \|u_0\|_{L^p_s}. $$
For $t\geq 1$ note that for any $r\in(1,\infty)$ there holds $\|E_t(\cdot)\|_{L^r}^r\leq c t^{\frac{n}{2}(1-r)}$
so that $$\|E_t(\cdot)\|_{L^r}\leq c t^{-\frac{n}{2r'}}.$$
Further recall that $\|\langle y\rangle^{-s}\|_{L^{n/s,\infty}} \leq C_s$. 
Choosing $r$ such that $1 +\frac1p = \frac1r + \frac{s}{n} +\frac1p$, we get by H\"older's and the  Hardy-Littlewood-Sobolev inequality in Lorentz spaces that 
\begin{align*}
\|e^{t\Delta} u_0\|_{L^p} & = \|E_t(\cdot) * u_0\|_{L^{p,p}} \leq c\|E_t(\cdot)\|_{L^{r,\infty}} \|\langle y\rangle^{-s} (\langle y\rangle^{s}\,u_0)\|_{L^{\frac{pn/s}{p+n/s},p}} \\
& \leq c\|E_t(\cdot)\|_{L^{r}} \|\langle y\rangle^{-s}\|_{L^{n/s,\infty}} \|\langle y\rangle^{s}u_0\|_{L^{p,p}} \leq Ct^{-\frac{n}{2r'}} \|u_0\|_{L^p_s}. 
\end{align*}  
Since $\frac1{r'}=\frac{s}{n}$, the norm estimate of $e^{t\Delta}: L^p_s \to L^p_0$ is proven. 
 
{\em  Step 2:}  Let us show that for any $0\leq s_0\leq s<\frac{n}{p'}$ the map $e^{t\Delta}: L^p_s\to L^p_{s_0} $ is bounded with norm estimate
$$ \|e^{t\Delta}\| \leq C_{s,s_0}(1+t)^{-(s-s_0)/2}, \quad t>0 .$$
We start with the case when $s_0=s$, where the map $e^{t\Delta}: L^p_s\to L^p_s $ is uniformly bounded in $t>0$. Indeed, since $E_t(x) = t^{-n/2} E_1(x/t)$ is a smooth, radial and decreasing function such that $\int_{\IR^n} E_1(x)\dx =1$, by \cite[Theorem 2.1.10]{GrafakosI} 
$$ \sup_{t>0}\, (E_t*|u_0|)(x) \leq Mu_0(x), $$
{\em cf.} \eqref{rho-Mu}. Owing to \eqref{Muu}
we conclude that
$$ \|(E_t*u_0)(x)\|_{L^p_s} \leq \|Mu_0\|_{L^p_s} \leq C_{M,w}\|u_0\|_{L^p_s}. $$ 
To proceed to general $s_0\in(0,s)$ we use 
complex interpolation of weighted spaces, see Triebel \cite[Theorem 1.18.5]{Tribel}, namely  
$$ [L^p_0,\, L^p_s]_\theta = L^p_{\theta s}, \quad 0<\theta<1,$$
and choose $\theta=\frac{s_0}{s}$. Since by the above arguments $T=T(t) = e^{t\Delta}:L^p_s\to L^p_0$ with norm bounded by $C(1+t)^{-s/2}$ and, moreover, $T: L^p_s\to L^p_s$ with norm $C_{M,w}$, we get that 
$$ T:L^p_s \to L^p_{\theta s}, \quad \|T(t)\|\leq C_{M,w}^\theta C^{1-\theta}  (1+t)^{-(1-\theta) s/2} = C_{s,s_0} (1+t)^{-(s-s_0)/2}. $$

{\em  Step 3:}  Let $1<p\leq q<\infty$ and $0 \leq s_0<s <\frac{n}{p'}$. Then  
there exists a constant $C>0$ independent of $p,q$ and $t>0$ such that
\begin{equation}\label{LqLp-oneweight}
\|e^{t\Delta}u_0\|_{L^q_{s_0} } \leq C (1+t)^{-\frac{s-s_0}{2}}\, t^{-\frac{n}2\big(\frac{1}{p}-\frac{1}{q}\big)} \|u_0\|_{L^p_{s}}.
\end{equation}
For the proof we decompose $T= T_1\circ T_2 := e^{t\Delta/2}e^{t\Delta/2}$ where
$$ T_1: L^p_{s_0} \to L^q_{s_0}, \quad T_2: L^p_s \to L^p_{s_0}.$$
Owing to Step 2 applied to $T_2$ it suffices to consider $T_1$.
To prove that $T_1(t)$ has a norm bounded by $ C t^{-\frac{n}{2}\big(\frac1p-\frac1q\big)} $ for any $t>0$ we consider the heat kernel in  \eqref{E_t} which satisfies for any $\lambda\in (0,n)$ the estimate
$$ E_t(x) \leq \frac{C t^{-\lambda/2}} {|x|^{n-\lambda}} \; \frac{|x|^{n-\lambda}}{t^{(n-\lambda)/2}} \,  e^{-\frac{|x|^2}{4t}} \leq \frac{C}{|x|^{n-\lambda}} t^{-\frac{\lambda}{2}}. $$
Hence, as for mapping properties of $T_1(t)$, it suffices to prove that the fractional integral operator  
$$ I_\lambda f(x) = \int_{\IR^n} f(x-y) \,\frac{1}{|y|^{n-\lambda}} \dy $$ 
with $\lambda = n\big(\frac1p-\frac1q\big)$ is a bounded transformation from $L^p_{s_0}$ to $L^q_{s_0}$. 
Actually, this holds by \cite[Theorem 4]{MuWh} provided that the weight $v(x)=\langle x\rangle^{s_0}$ satisfies the generalized Muckenhoupt condition 
$$ \Bigg(\frac{1}{|Q|} \int_Q v^q\dx\Bigg)^{1/q}  \Bigg(\frac{1}{|Q|} \int_Q v^{-p'}\dx\Bigg)^{1/p'} \leq K $$
where $Q$ runs through the set of all non-empty cubes $Q\subset \IR^n$, $|Q|$ denotes the Lebesgue measure of $Q$, and $K>0$ is independent of $Q$. 
This condition is equivalent to the Muckenhoupt condition $w\in \mathscr{A}_r$ for $r=1+\frac{q}{p'}$ and $w=v^q$. In our case, $w(x)=\langle x\rangle^{s_0q}$, and $s_0q<\frac{nq}{p'}=n(r-1)$, {\em i.e.,} $w \in \mathscr A_r$.

Now \cite[Theorem 4]{MuWh} implies that
$$ \|e^{t\Delta} u_0\|_{L^q_{s_0}} \leq C t^{-\frac{\lambda}{2}} \|u_0\|_{L^p_{s_0}} = C t^{-\frac{n}{2}\big(\frac1p-\frac1q\big)} \|u_0\|_{L^p_{s_0}},\quad t>0. $$

{\em  Step 4:}  Let $0\leq s_0<s<\frac{n}{p'}$ and $1<p\leq q<\infty$ such that  
$T=e^{t\Delta}: L^p_{s}\to L^q_{s_0}$ by Step 3 satisfies \eqref{LqLp-oneweight}. 
Since the adjoint $A_0^*$ on $L^{p'}_{\sigma,-s}$ formally equals $A_0$ on $L^{p}_{\sigma,s}$, also the adjoint of $e^{-tA_0}$ formally equals $e^{-tA_0}$, and we may consider $T^*=e^{t\Delta}$ having the same norm as $T$. \BLACK Consequently, $ T^*: L^{q'}_{-s_0} \to L^{p'}_{-s}$ has the norm bound 
$$ \|T^*\|\leq Ct^{-\frac{n}{2}(\frac1p-\frac1q)} (1+t)^{-(s-s_0)/2} = Ct^{-\frac{n}{2}(\frac1{q'}-\frac1{p'})} (1+t)^{-(-s_0-(-s))/2} $$
where $-s<-s_0$ and $q'\leq p'$. 
Thus $T$ satisfies the result also when $s_0<s\leq 0$.

{\em  Step 5:}  In the mixed case $s_0<0<s$ we use the composition $T= T_1 \circ T_2 := e^{t\Delta/2} e^{t\Delta/2}$ where
$$ T_1: L^p_{0}\to L^q_{s_0}, \quad T_2: L^p_s \to L^p_0.$$
Estimating norms of $T_1,T_2$ the desired result follows.

{\em  Step 6:} For any multi-index $\alpha\neq 0$ there holds the pointwise estimate $|\nabla^\alpha E_t(x)| \leq c_\alpha t^{-\frac{|\alpha|}{2}} E_t(\frac{x}{2})$ with a constant $c_\alpha$ independent of $t,x$. 

Now the proof is complete.
\end{proof}

\vspace{1ex}

We first consider the inner domain $\Omega_{R+2}$. 
\vspace{1ex}

\begin{prop}{(Local decay estimate)}\label{interior-domain2}
Let $1<  p <\infty$ and $-\frac{n}{p}< s_0 \leq s < \frac{n}{p'}$
with $s \geq 0$. 
\BLACK 
Set $u=e^{-t A}u_0$ for $u_0 \in  L^p_{\sigma,s}(\Omega)$, {\rm i.e.,} $u$, together with an associated pressure function $\pi$, is  a solution to the Stokes problem
\begin{align}\label{the-Stokes-eq-0}
\del_t u-\Delta u +\nabla \pi=0, \  \div u =0\ { on } \ \Omega, \ \
u=0 \mbox{ on } \del \Omega,  \mbox{ and }\ u(0)=u_{0}.  
\end{align} 
Then there exists a constant $C=C(\Omega,p,s,s_0,R)>0$ such that 
\begin{equation}\label{w-local-decay-R}
\|u(t)\|_{W^{1,p}_{s_0}(\Omega_{R+2})} \leq C t^{-\frac{n}{2p}-\frac{s}{2}} \|u_0\|_{L^p_s(\Omega)}, \quad t>1.
\end{equation}

\end{prop}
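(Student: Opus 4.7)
The plan is to combine the local unweighted decay of the Stokes semigroup from Proposition~\ref{interior-est-base} with whole-space heat-kernel estimates in the spirit of Proposition~\ref{whole-est-basis}. Since $\Omega_{R+2}$ is bounded, the weight $\langle x\rangle^{s_0}$ is uniformly equivalent to $1$ on it, so the statement reduces to the unweighted local estimate
$$
\big\|e^{-tA}u_0\big\|_{W^{1,p}(\Omega_{R+2})} \leq Ct^{-n/(2p)-s/2}\,\|u_0\|_{L^p_s(\Omega)},\qquad t>1.
$$

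First I would split the datum as $u_0=u_0^{\mathrm{in}}+u_0^{\mathrm{out}}$ via a smooth cut-off $\eta\in C_0^\infty(\IR^n)$ with $\eta\equiv 1$ on $B_{R+3}$ and $\supp\eta\subset B_{R+4}$, combined with a Bogovski\u{\i} correction $\mathbb B$ on the annulus $D=\{R+3<|x|<R+4\}$ so that both parts are solenoidal:
$$
u_0^{\mathrm{in}}:=\eta u_0-\mathbb B[\nabla\eta\cdot u_0],\qquad u_0^{\mathrm{out}}:=(1-\eta)u_0+\mathbb B[\nabla\eta\cdot u_0].
$$
Then $u_0^{\mathrm{in}}$ has compact support in $B_{R+4}$, while $u_0^{\mathrm{out}}$ vanishes on a neighborhood of $\bd\Omega$, so its zero-extension $\tilde u_0^{\mathrm{out}}$ belongs to $L^p_{\sigma,s}(\IR^n)$; by \eqref{B} and $\langle x\rangle^s\sim 1$ on $B_{R+4}$, one has
$$
\|u_0^{\mathrm{in}}\|_{L^p(\Omega)}+\|\tilde u_0^{\mathrm{out}}\|_{L^p_s(\IR^n)}\leq C\|u_0\|_{L^p_s(\Omega)}.
$$

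For the inner piece, Proposition~\ref{interior-est-base} with $q=p$ and radius $R'=R+4$ yields $\|e^{-tA}u_0^{\mathrm{in}}\|_{W^{1,p}(\Omega_{R+2})}\leq Ct^{-n/2}\|u_0\|_{L^p_s}$, which for $t>1$ is bounded by $Ct^{-n/(2p)-s/2}\|u_0\|_{L^p_s}$ because the hypothesis $s<n/p'$ is exactly equivalent to $n/2>n/(2p)+s/2$. For the outer piece I would first analyze the whole-space evolution $v(t):=E_t*\tilde u_0^{\mathrm{out}}$. A direct scaling computation using H\"older's inequality produces
$$
\sup_{x\in\Omega_{R+2}}\|E_t(x-\cdot)\|_{L^{p'}_{-s}(\IR^n)}\leq Ct^{-n/(2p)-s/2},\qquad t\geq 1,
$$
whose proof requires only $0\leq s<n/p'$ (equivalently $sp'<n$) for integrability at the origin after the substitution $y=x+\sqrt{t}z$; the analogous estimate for $\nabla_x E_t$ carries an extra factor $t^{-1/2}$, which is harmless for $t>1$. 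Consequently
$$
\|v(t)\|_{L^\infty(\Omega_{R+2})}+\|\nabla v(t)\|_{L^\infty(\Omega_{R+2})}\leq Ct^{-n/(2p)-s/2}\|u_0\|_{L^p_s},
$$
so that $v$ satisfies the target rate on $\Omega_{R+2}$.

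It remains to control the correction $w(t):=e^{-tA}u_0^{\mathrm{out}}-v(t)|_\Omega$, which solves on $\Omega$ a Stokes system with zero initial data and inhomogeneous Dirichlet data $w|_{\bd\Omega}=-v|_{\bd\Omega}$. My plan is to recast $w$ by Duhamel: choose $\chi\in C_0^\infty(B_{R+2})$ with $\chi\equiv 1$ near $\bd\Omega$ and set $\tilde W:=(1-\chi)v+\mathbb B[\nabla\chi\cdot v]$ with a Bogovski\u{\i} correction supported in $\{\nabla\chi\neq 0\}$; then $\tilde W|_{\bd\Omega}=0$, $\tilde W(0)=u_0^{\mathrm{out}}$, and $\tilde W$ solves a Stokes system on $\Omega$ whose forcing $F$ is supported in $\{\nabla\chi\neq 0\}\subset\Omega_{R+2}$ and inherits the decay of $v,\nabla v,\nabla^2 v$. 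Duhamel yields
$$
e^{-tA}u_0^{\mathrm{out}}=\tilde W(t)-\int_0^t e^{-(t-\tau)A}PF(\tau)\,d\tau,
$$
and since $F$ is compactly supported, Proposition~\ref{interior-est-base} applies to the integrand with local decay $(t-\tau)^{-n/2}$. I expect the main obstacle to be precisely this Duhamel step: splitting the integral into the regions $\{t-\tau\leq 1\}$ (where the short-time uniform boundedness of the Stokes semigroup is used) and $\{t-\tau\geq 1\}$ (where Proposition~\ref{interior-est-base} applies), and carefully combining with the $\tau$-decay $\tau^{-n/(2p)-s/2}$ of $F$, to recover exactly the rate $t^{-n/(2p)-s/2}$ of the statement without loss in the exponent.
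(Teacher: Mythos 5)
Your proposal follows essentially the same route as the paper's proof: split the datum into a compactly supported piece (handled by the local decay of Proposition~\ref{interior-est-base}), extend the remainder solenoidally to $\IR^n$ and run the whole-space heat flow, then recover the exterior-domain solution from the heat flow via a cut-off near $\partial\Omega$ and a Bogovski\u{\i} correction, landing on a Duhamel integral whose forcing is compactly supported and hence again controlled by Proposition~\ref{interior-est-base}. The paper organizes the same three ingredients slightly differently (it never explicitly writes $u_0 = u_0^{\mathrm{in}}+u_0^{\mathrm{out}}$, but compares $u$ directly with a cut-off $v$ of $\tilde v=e^{-tA_0}v_0$, so that $\mathcal U=u-v$ has the compactly supported initial value $\mathcal U_0=\varphi_R u_0-\mathbb B[(\nabla\varphi_R)\cdot u_0]$), but the resulting Duhamel integral and the splitting at $t-\tau=1$ are identical in spirit.

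Two small points you should make explicit to close the argument. First, your Duhamel formula invokes $P F$; for Proposition~\ref{interior-est-base} to apply to the integrand you need the forcing to remain compactly supported, so you should observe that $F$ is already solenoidal (since $\tilde W$ is divergence free, $\div F=\partial_t\div\tilde W-\Delta\div\tilde W=0$), hence $PF=F$ and the support is preserved. Second, your decay rate for $F(\tau)$ is the one valid for $\tau\geq 1$; for $\tau\leq 1$ the gradient term in $F$ carries the parabolic singularity $\tau^{-1/2}$, so the Duhamel integral also requires a split at $\tau=1$ (the paper's $\mathcal I_{21}$ term): the region $0<\tau<1$ contributes $C t^{-n/2}\int_0^1\tau^{-1/2}\,d\tau\|u_0\|_{L^p_s}$, which is harmless because $s<n/p'$ gives $n/2>n/(2p)+s/2$. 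With those two remarks your sketch matches the paper's proof.
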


\vspace{1ex}
Note that the weight $\langle x\rangle^{s_0}$ plays no role in \eqref{w-local-decay-R}. The local $L^p$ estimate of $u$ in \eqref{w-local-decay-R} is an immediate consequence of \eqref{local-decay} since $-\frac{n}{2} < -\frac{n}{2p}-\frac{s}{2}$ and $t>1$ 
directly imply that 
\begin{align*}
  \| u(t)\|_{L^{p}_{s_0}(\Omega_{R+2})} \leq C \| u(t)\|_{L^{p}(\Omega_{R+2})} 
\leq C t^{-\frac{n}{2p}-\frac{s}{2}} \|u_0\|_{ L^p_s(\Omega)\BLACK}, \quad t>1. 
\end{align*}
However, to estimate $\nabla u$ we prepare an auxiliary local result in Lemma \ref{interior-domain1} below.   


\vspace{1ex}

Let $\varphi_R\in C^\infty_0 (\Omega)$  be a smooth cut-off function satisfying $0 \leq \varphi_R \leq 1$ and 
$$\varphi_R(x)=\left\{
\begin{array}{ll}
 1  &  \mbox{ for } \  |x| \leq R+2\\
 0  &  \mbox{ for }  \ |x| \geq R+3. 
\end{array}  \right. 
$$ 
Hence ${\rm supp}\, \nabla\varphi_R\subset \overline{D_{R+2}}=\{x: R+2 \leq |x| \leq R+3\}$ and $(\nabla \varphi_R)\cdot u_0 \in L^q_m(D_{R+2})  \; 
$ 
for $u_0\in L^q_\sigma(\Omega)$. 
Recall the Bogovski\u{\i} operator $\mathbb{B}$ defined by \eqref{B} for $D_{R+2}$ and to be applied to $\nabla \varphi_R\cdot u_0$.
Similarly, we define $\psi_R \in C^\infty_0 (\Omega)$ by 
$$\psi_R(x)=\left\{
\begin{array}{ll}
 1  &  \mbox{ for } \  |x| \leq R+1\\
 0  &  \mbox{ for }  \ |x| \geq R+2. 
\end{array}  \right. 
$$ 
%

Let $u_0 \in 
L^p_{\sigma,s}(\Omega)$ and 
\begin{align}\label{v01}
v_{0} = (1-\varphi_R) u_0 + \mathbb{B}[(\nabla \varphi_R)\cdot u_0]\ \mbox{ on } \IR^n, 
\end{align}
{\em i.e.,} $v_0$ is a solenoidal extension of $u_0$ to the whole space. 
We note that 
\begin{align}\label{est-v0}
\|v_{0}\|_{L^p_{s_1}(\mathbb{R}^n)} \leq C\|u_0\|_{L^p_s(\Omega)}
\end{align}
for $-\frac{n}{p}< s_0 \leq s < \frac{n}{p'}$ by \eqref{B}.  
In addition, $v_{0} \in  L^p_{\sigma,s}(\mathbb{R}^n)$.
Then we apply the Stokes semigroup $e^{-tA_0}$ to $v_{0}$ and see that $\tilde{v}(t) = e^{-tA_0}v_{0}$ even solves the heat equation 
$$\del_t \tilde{v}-\Delta \tilde{v} = 0\ \mbox{ on }\ \IR^n,\ \ \tilde{v}(0)={v}_{0}.$$ 
Finally we set
\begin{align}\label{whole-est-base}
v(t) := (1-\psi_R) \tilde{v}(t) + \mathbb{B}'\BLACK [(\nabla \psi_R)\cdot \tilde{v}(t)]\ \mbox{ on } \Omega, \quad t>0, 
\end{align}
where $\mathbb{B}'$ is the Bogovski\u{\i} operator defined on $D_{R+1}$ in order to get that  $\textrm{div}\,v(\cdot,t)=0$ and $\textrm{div}\,v(\cdot,0)=0$. 
%
%
Thus $v$ is a solution to the inhomogeneous heat equation   
\begin{align}\label{heat-eq-2}
\del_t v-\Delta v
= F(t) \ \mbox{on} \ \Omega, \ \ 
v=0 \mbox{ on } \del\Omega \ \  \mbox{ and } \ v(0)= v_{0}\big|_{\Omega},  
\end{align}
in the exterior domain $\Omega$; here, 
with $(\nabla \psi_R)\cdot \nabla \tilde{v} = \big(\partial_j\psi_R \,\partial_i\tilde v_j\big)_i $,  
\begin{align}\label{def-F}
F(t) = 2(\nabla \psi_R)\cdot \nabla \tilde{v} + (\Delta \psi_R)\tilde{v} +  \mathbb{B}' [(\nabla \psi_R)\cdot\Delta\tilde{v}] - \Delta
\mathbb{B}' [(\nabla \psi_R)\cdot\tilde{v}]. 
\end{align}
To see that $v$ in \eqref{whole-est-base} has the initial value $v(0) =  v_0\big|_{\Omega}$ we use \eqref{whole-est-base} to get
\begin{align*}
v(0) & = (1-\psi_R) v_0 + \mathbb{B}'[(\nabla \psi_R)\cdot v_0]\\
& =: V_1 +V_2. \BLACK 
\end{align*}
Note that $V_1 = v_0$ since $\supp v_0\subset \overline\Omega\setminus B_{R+2}$ and $\psi_R=0$ for $|x|\geq R+2$. On the other hand, $V_2=0$ since 
$\supp \nabla \psi_R \subset \overline{D_{R+1}}$ and   $v_0=0$ on $\overline{D_{R+1}}$. \BLACK 


Obviously, \eqref{heat-eq-2} implies the reduction of the homogeneous Stokes solution $\tilde{v}$ on $\mathbb{R}^n$ by the Bogovski\u{\i} operator to the heat equation for $v$ with source term $F(t)$ on $\Omega$.
\\[1ex] 

For $F$ defined by \eqref{def-F} and the solution $v$ we prepare estimates as follows.  

\vspace{1ex}

\begin{lem}\label{interior-domain1}
Let  $1< p <\infty$ and $-\frac{n}{p}< s_0 \leq s < \frac{n}{p'}$ with $s \geq 0$. 
\BLACK Then $F$ defined by \eqref{def-F} satisfies the estimates 
\begin{align}
\|F(t)\|_{L^p_{s_0}(\Omega)} 
 &  \leq C t^{-\frac{n}{2p}-\frac{s}{2}} \|u_0\|_{L^p_s(\Omega)},\,\, \mbox{ uniformly in }  1 < t,\label{est-F1}\\
\|F(t)\|_{L^p_{s_0}(\Omega)} & \leq C t^{-\frac{1}{2}} \|u_0\|_{L^p_s(\Omega)},  \mbox{ uniformly in }  0< t \leq 1.  \label{est-F2}
\end{align} 
In addition, 
$v$ defined in \eqref{whole-est-base}-\eqref{def-F} with initial value $v_0\big|_\Omega$, see \eqref{v01}, also satisfies that 
\begin{align}\label{est-v-u_0-1}
 \|v(t)\|_{W^{1,p}_{s_0} (\Omega_{R+2})} & \leq C t^{-\frac{n}{2p}-\frac{s}{2}} \|u_0\|_{L^p_s(\Omega)},\, \mbox{ uniformly in }  1 < t,\\
\|v(t)\|_{ W^{1,p}_{s_0} (\Omega_{R+2})} & \leq C t^{-\frac{1}{2}} \|u_0\|_{L^p_s(\Omega)},  \mbox{ uniformly in }  0< t \leq 1.  \label{est-v-u_0-2}
\end{align}
\end{lem}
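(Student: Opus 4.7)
The plan is to exploit the fact that each term of $F(t)$ in \eqref{def-F} is supported in the bounded annulus $\overline{D_{R+1}}$, and that $v(t)|_{\Omega_{R+2}}$ likewise lives on a bounded set (since $(1-\psi_R)\tilde v$ vanishes where $\psi_R=1$ and $\mathbb{B}'[\,\cdot\,]$ has support in $\overline{D_{R+1}}$). On such bounded supports the weight $\langle x\rangle^{s_0}$ is equivalent to a constant, so it suffices to establish the corresponding unweighted $L^p$ bounds for the four summands of $F$ on $D_{R+1}$ and the unweighted $W^{1,p}$ bound for $v$ on $\Omega_{R+2}$.

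The technical heart is a local-in-space, $L^p_s$-to-$L^\infty$ decay estimate for $\tilde v(t)=e^{t\Delta}v_0$: uniformly in $x\in \overline{D_{R+2}}$,
\begin{equation*}
|\tilde v(t,x)|+t^{1/2}|\nabla\tilde v(t,x)|\le C\,t^{-\frac{n}{2p}-\frac{s}{2}}\|v_0\|_{L^p_s(\IR^n)},\quad t>1,
\end{equation*}
which refines Proposition~\ref{whole-est-basis} at the endpoint $q=\infty$ on a bounded set. I would derive it directly from the heat kernel representation by H\"older against $\langle y\rangle^{-s}$: $|\tilde v(t,x)|\le\|E_t(x-\cdot)\langle\cdot\rangle^{-s}\|_{L^{p'}}\|v_0\|_{L^p_s}$. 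Splitting the $L^{p'}$ integral into $|y|\le\sqrt t$ (where $E_t(x-y)\le Ct^{-n/2}$ and $\int_{|y|\le\sqrt t}\langle y\rangle^{-sp'}\dy\lesssim t^{(n-sp')/2}$, convergent exactly because $s<n/p'$) and $|y|\ge\sqrt t$ (where for $x$ in a bounded set and $t$ large one has $|x-y|\ge|y|/2$, so the Gaussian survives the rescaling $y=\sqrt t\,z$ and yields the factor $t^{-sp'/2}$ from $\langle\sqrt t\,z\rangle^{-sp'}\le t^{-sp'/2}$ for $|z|\ge 1$, using $s\ge 0$) gives the claimed rate after raising to the $1/p'$. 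The gradient bound is obtained identically via $|\nabla E_t(x)|\le c\,t^{-1/2}E_{t}(x/2)$.

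For $0<t\le 1$ the target rate $t^{-1/2}$ follows directly from Proposition~\ref{whole-est-basis} with $s_0=0$, $p=q$: $\|\tilde v\|_{L^p(\IR^n)}\le C\|v_0\|_{L^p_s}$ and $\|\nabla\tilde v\|_{L^p(\IR^n)}\le Ct^{-1/2}\|v_0\|_{L^p_s}$. With these bounds in hand I would estimate the four summands of $F$ as follows: $2(\nabla\psi_R)\cdot\nabla\tilde v$ and $(\Delta\psi_R)\tilde v$ are controlled directly by the local bounds on $\nabla\tilde v$ and $\tilde v$; for $\mathbb{B}'[(\nabla\psi_R)\cdot\Delta\tilde v]$ I rewrite $(\nabla\psi_R)\cdot\Delta\tilde v=\div G$ with $|G|\lesssim|\nabla\tilde v|$ on $D_{R+1}$ (integrating by parts in the Laplacian and using $\div\tilde v=0$), then invoke \eqref{B-2} to get $\|\mathbb{B}'[\div G]\|_{L^p}\lesssim\|\nabla\tilde v\|_{L^p(D_{R+1})}$; finally, $\Delta\mathbb{B}'[(\nabla\psi_R)\cdot\tilde v]$ is controlled via \eqref{B} with $k=1$, provided the mean of $(\nabla\psi_R)\cdot\tilde v$ on $D_{R+1}$ vanishes --- which it does since $\int_{D_{R+1}}(\nabla\psi_R)\cdot\tilde v\dx=-\int_{D_{R+1}}\psi_R\,\div\tilde v\dx=0$ by $\div\tilde v=0$. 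The bounds \eqref{est-v-u_0-1}--\eqref{est-v-u_0-2} for $v$ itself follow along the same lines from $\|v\|_{W^{1,p}(\Omega_{R+2})}\lesssim\|\tilde v\|_{W^{1,p}(\Omega_{R+2})}+\|\mathbb{B}'[(\nabla\psi_R)\cdot\tilde v]\|_{W^{1,p}(\Omega)}$, the second summand bounded by $\|\tilde v\|_{L^p(D_{R+1})}$ via \eqref{B} with $k=0$.

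The main obstacle is the $L^p_s\to L^\infty$ decay in the second paragraph: one cannot simply combine Proposition~\ref{whole-est-basis} with a Lebesgue embedding on $D_{R+1}$, since any route through $q<\infty$ loses a factor $t^{n/(2q)}$ and misses the target rate $t^{-n/(2p)-s/2}$. The direct heat-kernel splitting above recovers this endpoint exactly, and both hypotheses $s\ge 0$ and $s<n/p'$ are consumed at this step --- the lower bound makes the outer region $|y|\ge\sqrt t$ contribute $t^{-sp'/2}$, while the upper bound makes $\langle y\rangle^{-sp'}$ integrable over $\{|y|\le\sqrt t\}$ with the correct scaling in $t$.
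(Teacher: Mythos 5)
Your argument is correct, but for the key intermediate estimate you take a genuinely different route than the paper, and you slightly mischaracterize what the paper's alternative would be. You claim that ``one cannot simply combine Proposition~\ref{whole-est-basis} with a Lebesgue embedding,'' and you therefore resort to a direct heat-kernel split at $|y|=\sqrt t$ to reach the $L^p_s\to L^\infty$ endpoint. The paper instead uses the semigroup factorization $e^{-tA_0}=e^{-tA_0/2}\,e^{-tA_0/2}$: the unweighted classical estimate \eqref{whole-pq2} with $q=\infty$ gives $\|e^{-tA_0/2}w\|_{L^\infty}\le C t^{-n/(2p)}\|w\|_{L^p}$, and Proposition~\ref{whole-est-basis} with $p=q$, $s_0=0$ gives $\|e^{-tA_0/2}v_0\|_{L^p}\le C(1+t)^{-s/2}\|v_0\|_{L^p_s}$; multiplying yields exactly $t^{-n/(2p)-s/2}$ with no loss. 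So the $q=\infty$ endpoint is reachable without heat-kernel analysis, provided one first strips the weight at the $L^p$ level and only then uses the unweighted $L^p\to L^\infty$ decay. Your direct kernel estimate is equally valid (both hypotheses $0\le s<n/p'$ are used exactly where you say they are, in the convergence over $|y|\le\sqrt t$ and the damping over $|y|\ge\sqrt t$), and it has the small advantage of being self-contained; the paper's factorization is shorter once the building blocks of Proposition~\ref{whole-est-basis} are in place. Two minor imprecisions worth flagging in your write-up: (i) the identity for $(\nabla\psi_R)\cdot\Delta\tilde v$ produces a divergence \emph{plus} the lower-order term $\nabla^2\psi_R:\nabla\tilde v$ (see \eqref{commutator}); you need \eqref{B-2} for the divergence part and \eqref{B} with $k=0$ for the remainder, not a single $\div G$ representation. (ii) Your bound $|x-y|\ge|y|/2$ for $|y|\ge\sqrt t$, $x\in\overline{D_{R+2}}$ only holds once $\sqrt t\ge 2(R+2)$; for the intermediate range $1<t\lesssim R^2$ a cruder bound (or Iwashita-type local decay) is needed, though this is a compactness-in-$t$ triviality.
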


\vspace{1pt}

\begin{proof} 
We start with the estimate of $F(t)$ 
when $t>1$. Note that $\supp F(t) \subset \overline{D_{R+1}}$ due to \eqref{B} and the definition of $\psi_R$. \BLACK For terms involving $\mathbb B'$, note that 
\begin{align}\label{commutator}
(\nabla \psi_R)\cdot \Delta \tilde{v}
= \div\{ \nabla \psi_R \cdot  \nabla \tilde{v}\}- 
\nabla^2 \psi_R:\nabla \tilde v
\end{align}
and apply \eqref{B} and \eqref{B-2}. For the second term with $\Delta\mathbb B'$ we exploit \eqref{B}  with $k=1$. Summarizing we get that 
\begin{align}\label{F-est1}
\|F(t)\|_{L^p_{s_0}(\Omega)} \leq 
C\|\tilde{v}\|_{W^{1,p}_{s_0}(\Omega_{R+2})} \leq 
C\|\tilde{v}\|_{W^{1,\infty}(\mathbb{R}^n)}. 
\end{align}
Next \eqref{whole-pq2} with $q=\infty$ and Proposition \ref{whole-est-basis} for $p=q$ and $s_0=0$ applied to $\tilde v(t)$ yield 
\begin{align}\label{est-tildev}
\|\tilde v(t)\|_{L^\infty(\mathbb{R}^n)} = \|e^{-tA_0}v_0\|_{L^\infty(\mathbb{R}^n)} \leq C t^{-\frac{n}{2p}} \|e^{-t A_0/2}v_0\|_{L^p(\mathbb{R}^n)} \leq 
C t^{-\frac{n}{2p}-\frac{s}{2}} \|u_0\|_{L^p_s(\Omega)}.
\end{align}
Similarly, we treat $\nabla\tilde v(t)$ getting the same upper bound for $t>1$. Hence we obtain that  
\begin{align*}
    \|F(t)\|_{L^p_{s_0}(\Omega)}\leq 
    Ct^{-\frac{n}{2p}-\frac{s}{2}} \|u_0\|_{L^p_s(\Omega)},\quad t>1.
\end{align*} 

By analogy, we derive from the definition of $v$, see \eqref{whole-est-base}, 
the estimate \eqref{est-v-u_0-1}. 

Next consider $0<t\leq 1$. By \eqref{F-est1} which also holds for $0<t\leq 1$ and \eqref{whole-pq}
\begin{align*}
\|F(t)\|_{L^p_{s_0}(\Omega)} &
\leq C\| \tilde{v}\|_{W^{1,p}_{s_0}(\mathbb{R}^n)} \leq Ct^{-\frac{1}{2}}\|v_0\|_{L^p_s(\IR^n)}\leq  Ct^{-\frac{1}{2}} \|u_0\|_{L^p_s(\Omega)}. 
\end{align*}  
%
%
By \eqref{whole-est-base} we  similarly get estimates  of $\|v(t)\|_{L^p_{s_0} (\Omega_{R+2})}$ and $\|\nabla v(t)\|_{L^p_{s_0} (\Omega_{R+2})}$  in \eqref{est-v-u_0-2}. 
\end{proof}

\vspace{1ex}


\vspace{1ex}

We can derive the local decay estimate of the solution $u=e^{-t A}u_0 $ in $W^{1,p}(\Omega)$ also with the help of the solution $v$ to \eqref{heat-eq-2} on $\Omega$ as follows.
Indeed, for $\nabla u$, we need the following proof because \eqref{local-decay-k} is applicable to $u_0$ only if $\supp u \subset B_R$ in contrast to \eqref{local-decay} which holds for all $u\in L^p_\sigma(\Omega)$.
%


\vspace{1ex}
 \begin{proof}[Proof of Proposition \ref{interior-domain2}]
Let $v$ be the solution to \eqref{heat-eq-2} and set $\mathcal{U}=u-v$. 
Then $\mathcal{U}$ solves the equation
\begin{align}\label{the-Stokes-eq-2}
\del_t \mathcal{U}-\Delta \mathcal{U} +\nabla\pi = -F, \ \div \mathcal{U} =0  \ \ \mbox{on} \  \Omega, \ \ \
\mathcal{U}\big|_{\del \Omega}=0, \ \  \  \mathcal{U}(0)=\mathcal{U}_{0}:=u_0-v_0, 
\end{align} 
where 
$\pi$ is defined by \eqref{the-Stokes-eq-0}, 
$F$ is defined by (3.16) and solenoidal by (3.14), (3.15),
and 
\begin{equation}\label{U_0}
\mathcal{U}_{0}= \varphi_R u_0- \mathbb{B}[(\nabla \varphi_R)\cdot u_0] \in L^p_{\sigma,s}(\Omega). 
\end{equation}
Hence we obtain the integral equation
\begin{align}\label{form-mathcalU}
\begin{aligned}
\mathcal{U}(t) & = e^{-t A} \mathcal{U}_{0} - \displaystyle\int_0^t e^{-(t-\tau) A}F(\tau) \dtau\\
& = e^{-t A} \mathcal{U}_{0} - \int_0^{t/2} e^{-(t-\tau) A}F(\tau) \dtau - \int_{t/2}^{t} e^{-(t-\tau) A}F(\tau) \dtau\\
& = I_1+I_2 +I_3. 
\end{aligned}
\end{align}
By Lemma \ref{interior-domain1}, it is sufficient to show that 
\begin{equation}\label{UtW1q}
\|\mathcal{U}(t)\|_{W^{1,p}_{s_0}(\Omega_{R+2})} \leq C t^{-\frac{n}{2p}-\frac{s}{2}} \|u_0\|_{L^p_s(\Omega)}\quad \textrm{uniformly in }t>1.
\end{equation}

Concerning $I_1$, \eqref{local-decay} in Proposition \ref{interior-est-base} and \eqref{U_0} \BLACK directly show that 
$$
\|e^{-t A}\mathcal{U}_{0}\|_{L^{p}(\Omega_{R+2})} \leq 
C t^{-\frac{n}{2p}-\frac{s}{2}} \|\mathcal{U}_0\|_{L^p(\Omega)} \leq   
C t^{-\frac{n}{2p}-\frac{s}{2}} \|u_0\|_{L^p(\Omega_{R+3}\BLACK)} \leq
Ct^{-\frac{n}{2p}-\frac{s}{2}} \|u_0\|_{L^p_s(\Omega)},\; t>1,
$$
since by assumption 
$-\frac{n}{2} < -\frac{n}{2p}-\frac{s}{2}$.   

As for $I_2$, the Poincar\'e inequality on $\Omega_{R+2}$ implies that 
\begin{align*}
\begin{aligned}
\|I_2\|_{L^{p}_{s_0}(\Omega_{R+2})} & \leq 
C \int_0^{t/2} \|\nabla e^{-(t-\tau) A}F(\tau)
\|_{L^{p}(\Omega_{R+2})} \dtau \\
 & \leq 
C \Big(\int_0^{1/2} + \int_{1/2}^{t/2}\Big) \|\nabla e^{-(t-\tau) A}F(\tau)
\|_{L^{p}(\Omega_{R+2})} \dtau\\
& =:  \mathcal I_{21} + \mathcal I_{22}.
\end{aligned}
\end{align*}
Here $\mathcal I_{21}$ is estimated by \eqref{local-decay-k} in Proposition \ref{interior-est-base} and \eqref{est-F2}, exploiting that $\supp F(t) \subset \overline{D_{R+1}}$, so that 
\begin{align*}
\mathcal I_{21}
& \leq C \displaystyle\int_0^{1/2} (t-\tau)^{-\frac{n}{2}} \tau^{-\frac{1}{2}} \dtau \,
\|u_0\|_{L^p(\Omega)} \\
& \leq C t^{-\frac{n}{2}}
\|u_0\|_{L^p_s(\Omega)}. 
\end{align*}
Concerning $\mathcal I_{22}$, independent of the sign of $-\frac{n}{2p}-\frac{s}{2}+1$, we get by \eqref{local-decay-k} and \eqref{est-F1} that
 \begin{align*}
\mathcal I_{22}
& \leq C \displaystyle\int_{1/2}^{t/2} (t-\tau)^{-\frac{n}{2}} \tau^{-\frac{n}{2p}-\frac{s}{2}} \dtau \, \|u_0\|_{L^p_s(\Omega)} \\
& \leq C t^{-\frac{n}{2}} \int_{1/2}^{t/2} \tau^{-\frac{n}{2p}-\frac{s}{2}} \dtau\,\|u_0\|_{L^p_s(\Omega)}\\
& \leq C t^{-\frac{n}{2}} \Big( t^{-\frac{n}{2p}-\frac{s}{2}+1} +1 \Big)\,\|u_0\|_{L^p_s(\Omega)}\\
& \leq C \Big(t^{-\frac{n}{2p}-\frac{s}{2}+1-\frac{n}{2}} + t^{-\frac{n}{2}}\Big)\,\|u_0\|_{L^p_s(\Omega)}\\
& \leq Ct^{-\frac{n}{2p}-\frac{s}{2}} \|u_0\|_{L^p_s(\Omega)}.
\end{align*}
In the special case that $-\frac{n}{2p}-\frac{s}{2}=-1$, we arrive at the bound $Ct^{-\frac{n}{2}}(1+\log t)$ which again decays faster than $ t^{-\frac{n}{2p}-\frac{s}{2}} $, since $n\geq 3$ and $-\frac{n}{2}< -\frac{n}{2p}-\frac{s}{2}$.

%
Hence we proved that 
$$
\|I_2\|_{L^{p}_{s_0}(\Omega_{R+2})} \leq C t^{-\frac{n}{2p}-\frac{s}{2}} \|u_0\|_{L^p_s(\Omega)}, \quad t>1. 
$$
Finally, Poincar\'e's inequality implies that  
\begin{align*}
\|I_3\|_{L^{p}_{s_0}(\Omega_{R+2})} & \leq 
C \Bigg(\int_{t/2}^{t-1} + \int_{t-1}^{t}\Bigg) \|\nabla e^{-(t-\tau) A}F(\tau)
\|_{L^{p}(\Omega_{R+2})} \dtau \\
& =:  \mathcal I_{31} + \mathcal I_{32}.
\end{align*} 
The estimate of $\mathcal I_{31}$ is analogous to that of $\mathcal I_{22}$ as 
\begin{align*}
\mathcal I_{31}
& \leq C \int_{t/2}^{t-1}  (t-\tau)^{-\frac{n}{2}} \tau^{-\frac{n}{2p}-\frac{s}{2}} \dtau \, \|u_0\|_{L^p_s(\Omega)} \\
& \leq C  t^{-\frac{n}{2p}-\frac{s}{2}} \displaystyle\int_{t/2}^{t-1}  (t-\tau)^{-\frac{n}{2}} \dtau \, \|u_0\|_{L^p_s(\Omega)} \\
& \leq Ct^{-\frac{n}{2p}-\frac{s}{2}} \|u_0\|_{L^p_s(\Omega)},
\end{align*}
since $n\geq 3$ and $t>1$. We see from $\eqref{e-tAt}_3$ \BLACK   
for 
$|\alpha|=1$ and \eqref{est-F1}  that 
\begin{align*}
\mathcal I_{32}
& \leq C \displaystyle\int_{t-1}^{t}  (t-\tau)^{-\frac{1}{2}} \tau^{-\frac{n}{2p}-\frac{s}{2}} \dtau \, \|u_0\|_{L^p_s(\Omega)} \\
& \leq Ct^{-\frac{n}{2p}-\frac{s}{2}} \|u_0\|_{L^p_s(\Omega)}.
\end{align*}

The estimate of $\|\nabla \mathcal{U}\|_{L^p_{s_0}(\Omega_{R+1})}$ is obtained by analogy without using the Poincar\'e inequality. This completes the proof of \eqref{UtW1q} and of   Proposition \ref{interior-domain2}. 
\end{proof}

\vspace{1ex}
Next we state a uniform estimate for $\mathcal U$,   $0<t<2$, \BLACK which will be used in the proof of Lemma \ref{est-G2} below. 

\vspace{1ex}

\begin{lem}\label{est-mathcalU-tsmall}
    Let $1< p <\infty$ and  $-\frac{n}{p} < s_0 \leq s < \frac{n}{p'}$ with $s \geq 0$.
    \BLACK Then it holds that 
$$
\|\mathcal{U}(t)\|_{W^{1,p}_{s_0}(\Omega_{R+2})} \leq Ct^{-\frac{1}{2}}\|u_0\|_{L^p_s(\Omega)}, \quad  0< t \leq 2.
$$
\end{lem}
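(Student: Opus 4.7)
My plan is to split the range $0<t\leq 2$ into $(0,1]$ and $(1,2]$ and to handle the two sub-ranges separately, reducing in each case to estimates already established.

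For $1<t\leq 2$ the bound is essentially free: Proposition \ref{interior-domain2} combined with \eqref{est-v-u_0-1} from Lemma \ref{interior-domain1} yields
$$\|\mathcal{U}(t)\|_{W^{1,p}_{s_0}(\Omega_{R+2})}\leq C t^{-\frac{n}{2p}-\frac{s}{2}}\|u_0\|_{L^p_s(\Omega)}.$$
On the compact interval $(1,2]$ this is a bounded quantity, and since $t^{-1/2}\geq 2^{-1/2}$ there, the asserted bound $\leq Ct^{-1/2}\|u_0\|_{L^p_s(\Omega)}$ follows at once.

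The substance lies in the short-time range $0<t\leq 1$, where I would use the integral representation \eqref{form-mathcalU} directly: $\mathcal{U}(t) = e^{-tA}\mathcal{U}_0 - \int_0^t e^{-(t-\tau)A}F(\tau)\,d\tau$. For the initial-value term, \eqref{U_0} together with the support property of the Bogovski\u{\i} operator in \eqref{B} shows that $\mathcal{U}_0$ is supported in $\Omega_{R+3}$; hence $\langle x\rangle^s$ is equivalent to a positive constant on this support, giving $\|\mathcal{U}_0\|_{L^p(\Omega)}\leq C\|u_0\|_{L^p_s(\Omega)}$. The short-time Stokes estimate \eqref{e-tAt} with $p=q$ and $|\alpha|=0,1$ then yields $\|\nabla^\alpha e^{-tA}\mathcal{U}_0\|_{L^p(\Omega)} \leq Ct^{-|\alpha|/2}\|u_0\|_{L^p_s(\Omega)}$. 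For the Duhamel term I would use \eqref{est-F2} combined with the same short-time semigroup bound: since $F(\tau)$ is supported in $\overline{D_{R+1}}$, its weighted and unweighted $L^p$ norms are comparable and controlled by $C\tau^{-1/2}\|u_0\|_{L^p_s}$, and for $\tau<t\leq 1$ the increment $t-\tau$ lies in $(0,1]$, so
$$\Bigl\|\nabla^\alpha \int_0^t e^{-(t-\tau)A}F(\tau)\,d\tau\Bigr\|_{L^p(\Omega)} \leq C\|u_0\|_{L^p_s}\int_0^t (t-\tau)^{-|\alpha|/2}\tau^{-1/2}\,d\tau.$$
The right-hand integral is bounded by a constant on $(0,1]$ (a beta integral when $|\alpha|=1$, and $2\sqrt{t}$ when $|\alpha|=0$), and since $t^{-1/2}\geq 1$ on $(0,1]$, every constant bound upgrades to $\leq Ct^{-1/2}$. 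Finally, since $\Omega_{R+2}$ is bounded, the weight $\langle x\rangle^{s_0}$ is equivalent to a constant there, so the unweighted $L^p$ estimates for $\mathcal{U}$ and $\nabla\mathcal{U}$ transfer to the weighted norm $\|\cdot\|_{W^{1,p}_{s_0}(\Omega_{R+2})}$.

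I do not anticipate a genuine analytical obstacle, as all the required semigroup and source bounds have already been assembled in Proposition \ref{interior-est-base} and Lemma \ref{interior-domain1}. The only delicate point is bookkeeping: one must check that the Bogovski\u{\i}-operator pieces in both $\mathcal{U}_0$ and $F$ indeed sit in bounded regions ($\Omega_{R+3}$ and $\overline{D_{R+1}}$, respectively) so that weighted and unweighted $L^p$ norms on those supports are comparable, which is immediate from \eqref{B} and the definitions of $\varphi_R$ and $\psi_R$.
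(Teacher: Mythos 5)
Your proposal is correct, but it takes a noticeably more elaborate route on $(0,1]$ than the paper does. The paper handles all of $0<t\leq 2$ in one stroke by writing $\mathcal{U}=u-v$ and estimating the two pieces separately and \emph{pointwise in time}: for $u=e^{-tA}u_0$ it uses $\eqref{e-tAt}_3$ with $p=q$ to get $\|\nabla u(t)\|_{L^p(\Omega)}\leq Ct^{-1/2}\|u_0\|_{L^p(\Omega)}\leq Ct^{-1/2}\|u_0\|_{L^p_s(\Omega)}$ (using $s\geq 0$) and then Poincar\'e's inequality on $\Omega_{R+2}$ (since $u$ vanishes on $\partial\Omega$) to bound $\|u(t)\|_{L^p(\Omega_{R+2})}$ by $\|\nabla u(t)\|_{L^p(\Omega)}$; for $v$ it simply quotes Lemma \ref{interior-domain1}, whose bounds \eqref{est-v-u_0-1}--\eqref{est-v-u_0-2} already give $t^{-1/2}$ on $(0,2]$. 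Your argument for $1<t\leq 2$ is essentially this (modulo invoking Proposition \ref{interior-domain2}, which itself needed the Duhamel representation, rather than estimating $u$ directly), while for $0<t\leq 1$ you reconstruct $\mathcal{U}$ from its Duhamel formula \eqref{form-mathcalU}, re-estimate $e^{-tA}\mathcal{U}_0$ and the convolution with $F$, and evaluate a Beta integral. That works — the support observations about $\mathcal{U}_0$ and $F$ are correct, and the resulting integrals are indeed bounded on $(0,1]$ — but it is machinery that the paper only needs for the $t>1$ decay rate $t^{-n/(2p)-s/2}$ in Proposition \ref{interior-domain2}; for the modest $t^{-1/2}$ rate at short times, direct subtraction $\mathcal{U}=u-v$ plus the already-proved bounds on $v$ does the job in two lines. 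Also note that, strictly speaking, $\eqref{e-tAt}_3$ is stated only for $0<t\leq 1$, so for $1<t\leq 2$ one either extends it by the contour remark (Remark \ref{01t}) or falls back on $\eqref{e-tAt}_2$ and boundedness on the compact interval; your proposal implicitly does the latter via Proposition \ref{interior-domain2}, which is fine.
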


\begin{proof}
We note that $\mathcal{U}=u-v$. 
Combining Poincar\'e's inequality and $\eqref{e-tAt}_3$ \BLACK for $u$ with
\eqref{est-v-u_0-2} for $v$ implies that for $0< t \leq 2$ 
$$
\|\mathcal{U}\|_{W^{1,p}_{s_0}(\Omega_{R+2})} \leq
C\big\{\|\nabla u\|_{L^p(\Omega)}+\|v\|_{W^{1,p}_{s_0}(\Omega_{R+2})}\big\}
\leq Ct^{-\frac{1}{2}}\|u_0\|_{L^p_s(\Omega)}. 
$$
The estimate of $\nabla \mathcal{U}$ is similar without using the Poincar\'e inequality. 
\end{proof}
\BLACK



\vspace{1ex}

So far we proved estimates of $u,\,v$ and $\mathcal U$ on the inner domain $\Omega_{R+2}$ for all $t>0$. \BLACK Next we consider estimates on domains away from the boundary. As in the inner domain case, we estimate $v$ and $\mathcal U = u-v$. 

\BLACK 
\vspace{1ex}

\begin{lem}\label{exterior-est1}
Let  $1<  p \leq q <\infty$ and $-\frac{n}{q}< s_0 \leq s < \frac{n}{p'}$. 
\BLACK  Then it holds for $v$   defined as the solution of the heat equation \eqref{heat-eq-2} that 
\begin{align}\label{exterior-est1-ineq}
&\| v(t)\|_{L^{q}_{s_0}(\Omega)} \leq C t^{-\frac{n}{2}\big(\frac{1}{p}-\frac{1}{q}\big)} (1+t)^{ -\frac{s-s_0}{2}\BLACK} \|u_0\|_{L^p_{s}(\Omega)}, \quad t> 0,\\
 &\|\nabla  v(t)\|_{L^{q}_{s_0}(\Omega)} \leq C\big( t^{-\frac{n}{2}\big(\frac{1}{p}-\frac{1}{q}\big)-\frac{1}{2}} (1+t)^{ -\frac{s-s_0}{2}} + t^{-\frac{n}{2q}-\frac{s}{2}}  \big)\|u_0\|_{L^p_{s}(\Omega)}, \quad t> 0, 
\end{align}
uniformly in $t$. 
\end{lem}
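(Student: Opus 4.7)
\medskip

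\noindent\textbf{Proof sketch.}
The plan is to exploit the explicit decomposition \eqref{whole-est-base},
\[
v(t) = (1-\psi_R)\tilde v(t) + \mathbb{B}'[(\nabla\psi_R)\cdot \tilde v(t)],
\]
separating the estimate into a ``whole-space'' piece and a ``localized'' Bogovski\u{\i} correction piece. The first piece $(1-\psi_R)\tilde v(t)$ is controlled pointwise by $\tilde v(t)$, hence by Proposition~\ref{whole-est-basis} (with multi-index $\alpha=0$ or $|\alpha|=1$ and noting \eqref{est-v0}) yields directly
\[
\|(1-\psi_R)\nabla^\alpha \tilde v(t)\|_{L^q_{s_0}(\Omega)} \leq C\,t^{-\frac{n}{2}\big(\frac{1}{p}-\frac{1}{q}\big)-\frac{|\alpha|}{2}} (1+t)^{-\frac{s-s_0}{2}}\|u_0\|_{L^p_s(\Omega)}.
\]
The differentiation also produces the additional term $-(\nabla\psi_R)\otimes\tilde v$, which is supported in $\overline{D_{R+1}}$ and will be treated together with the Bogovski\u{\i} piece.

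For the Bogovski\u{\i} correction and the term $(\nabla\psi_R)\otimes \tilde v$, the key observation is that both are supported in the bounded annulus $\overline{D_{R+1}}$, so the weight $\langle x\rangle^{s_0}$ is uniformly comparable to a constant on their support. Moreover, $(\nabla\psi_R)\cdot\tilde v\in L^q_m(D_{R+1})$ since $\tilde v$ is solenoidal on $\mathbb R^n$ (integrate $\nabla\psi_R\cdot\tilde v=\div(\psi_R\tilde v)-\psi_R\div\tilde v$ over $\IR^n$). Using \eqref{B} with $k=0$ therefore yields
\[
\bigl\|\mathbb{B}'[(\nabla\psi_R)\cdot\tilde v(t)]\bigr\|_{W^{1,q}(\IR^n)} + \|(\nabla\psi_R)\otimes\tilde v\|_{L^q(D_{R+1})} \leq C\|\tilde v(t)\|_{L^q(D_{R+1})},
\]
so everything reduces to estimating $\|\tilde v(t)\|_{L^q(D_{R+1})}$.

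The main step is then to bound $\|\tilde v(t)\|_{L^q(D_{R+1})}$ by splitting into $0<t\le 1$ and $t>1$. For $0<t\le 1$, we simply use Proposition~\ref{whole-est-basis}:
\[
\|\tilde v(t)\|_{L^q(D_{R+1})} \leq C\|\tilde v(t)\|_{L^q_{s_0}(\IR^n)} \leq C\,t^{-\frac{n}{2}\big(\frac{1}{p}-\frac{1}{q}\big)}\|u_0\|_{L^p_s(\Omega)},
\]
which for $t\le 1$ is absorbed into the first term of the stated bound (since the extra factor $t^{-1/2}$ only enlarges the bound for the gradient estimate, and $(1+t)^{-(s-s_0)/2}$ is of order one). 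For $t>1$ we improve the decay by a semigroup factorization: writing $\tilde v(t)=e^{(t/2)\Delta}(e^{(t/2)\Delta}v_0)$, Proposition~\ref{whole-est-basis} (with $q=p$, $s_0=0$) gives $\|e^{(t/2)\Delta}v_0\|_{L^p(\IR^n)}\le C(1+t)^{-s/2}\|v_0\|_{L^p_s}$, and then the classical $L^p$-$L^\infty$ heat estimate \eqref{whole-pq2} yields
\[
\|\tilde v(t)\|_{L^q(D_{R+1})} \leq C\|\tilde v(t)\|_{L^\infty(\IR^n)} \leq C\,t^{-\frac{n}{2p}-\frac{s}{2}}\|u_0\|_{L^p_s(\Omega)}, \quad t>1,
\]
and since $p\le q$ we have $t^{-\frac{n}{2p}-\frac{s}{2}}\le t^{-\frac{n}{2q}-\frac{s}{2}}$ for $t>1$, yielding the second term in \eqref{exterior-est1-ineq} for the gradient. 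For the $L^q_{s_0}$-estimate of $v$ itself (no derivative), the Bogovski\u{\i}-type contribution is dominated by the whole-space term using the same reasoning.

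The main technical obstacle is the bookkeeping in the gradient case: one must verify that the local bound on $\|\tilde v(t)\|_{L^q(D_{R+1})}$ gives the correct decay rate in both time regimes and is consistent with (rather than worse than) the stated right-hand side. The splitting at $t=1$, together with the semigroup factorization that transfers the weight from the initial data to a polynomial decay factor, is what delivers the extra factor $t^{-s/2}$ needed in the second term of \eqref{exterior-est1-ineq} for large times.
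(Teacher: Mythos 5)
Your argument is correct and follows the same route as the paper: decompose $v$ via \eqref{whole-est-base} into the far-field piece $(1-\psi_R)\tilde v$, controlled directly by Proposition \ref{whole-est-basis}, and the compactly supported cut-off/Bogovski\u{\i} corrections, controlled by local estimates of $\tilde v$. The one place where you are more careful than the paper is the small-time regime $0<t\le 1$: the paper invokes the $L^\infty$ bound $\|\tilde v(t)\|_{L^\infty}\le Ct^{-n/(2p)-s/2}\|u_0\|_{L^p_s}$ for all $t>0$, which for $t<1$ and $p<q$ is worse than the asserted second term $t^{-n/(2q)-s/2}$, whereas your explicit split at $t=1$ (bounding the local contribution by $\|\tilde v(t)\|_{L^q_{s_0}(\IR^n)}\lesssim t^{-\frac{n}{2}(\frac1p-\frac1q)}\|u_0\|_{L^p_s}$ for $t\le 1$, which is absorbed by the first term on the right-hand side) is the cleaner way to close the argument. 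You also explicitly verify the mean-zero condition $(\nabla\psi_R)\cdot\tilde v\in L^q_m(D_{R+1})$ needed to apply the Bogovski\u{\i} operator, a point the paper leaves implicit.
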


\begin{proof} 
The estimate \eqref{exterior-est1-ineq} is directly obtained by the definition of $v$ in \eqref{whole-est-base} and estimates of $\tilde{v}=e^{-tA_0}v_0$ by Proposition \ref{whole-est-basis}, $|\alpha|=0$.  
For $|\alpha|=1$, we note that 
 \begin{align*}
\nabla v = (1-\psi_R) \nabla \tilde{v}  -(\nabla\psi_R) \cdot\tilde{v} \BLACK + \nabla \mathbb{B}' [(\nabla \psi_R)\cdot \tilde{v}],\quad t>0. 
\end{align*}
Then \eqref{B}, $\supp (\nabla \psi_R) \subset \Omega_{R+2}$, and \eqref{est-tildev}, which by Proposition \ref{whole-est-basis} holds for all $t> 0$, imply that 
$$
\|(\nabla\psi_R) \tilde{v} - \nabla \mathbb{B}' [(\nabla \psi_R)\cdot \tilde{v}]\|_{L^q_{s_0}(\Omega)} \leq 
C \| \tilde{v}\|_{L^\infty(\mathbb{R}^n)} \leq C t^{-\frac{n}{2q}-\frac{s}{2}} \|u_0\|_{L^p_{s}(\Omega)}. 
$$
Moreover, again by Proposition \ref{whole-est-basis},  one obtains that 
\begin{align*}
\|\nabla \tilde v(t)\|_{L^{q}_{s_0}(\Omega)} \leq C  t^{-\frac{n}{2}\big(\frac{1}{p}-\frac{1}{q}\big)-\frac{1}{2}} (1+t)^{ -\frac{s-s_0}{2}\BLACK} \|u_0\|_{L^p_{s}(\Omega)}
\end{align*}
for all $t>0$.
\end{proof}




\vspace{1ex} 
 The crucial estimates of the solution $u=e^{-t A}u_0$ of the Stokes problem at spatial infinity are described in the next Proposition. \BLACK

\vspace{1ex}

\begin{prop}\label{exterior-est2}
Let  $1< p \leq q <\infty$, $ \frac{n}{2} \big(\frac{1}{p}-\frac{1}{q}\big)<1$, 
$-\frac{n}{q}< s_0 \leq s < \frac{n}{p'}\BLACK $ with $s \geq 0$ 
\BLACK and $\Omega^{R}=\Omega \setminus\bar B_{R}.$ 
Assume that $u=e^{-t A}u_0$ for $u_0 \in L^p_{\sigma,s}(\Omega)$. 
Then it holds that 
\begin{align}\label{exterior-est2-ineq}
\|u(t)\|_{L^{q}_{s_0}(\Omega^{R+4}\BLACK)} \leq C t^{-\frac{n}{2}\big(\frac{1}{p}-\frac{1}{q}\big)-\frac{s-s_0}{2}} \|u_0\|_{L^p_{s}(\Omega)} \quad \textrm{uniformly in } t\geq  2. 
\end{align}
\end{prop}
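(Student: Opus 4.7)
The strategy is to once more exploit the splitting $u = v + \mathcal{U}$ introduced in \eqref{v01}--\eqref{heat-eq-2}. For $v$, Lemma \ref{exterior-est1} already yields
$$\|v(t)\|_{L^{q}_{s_0}(\Omega^{R+4})} \leq C t^{-\frac{n}{2}(\frac{1}{p}-\frac{1}{q})}(1+t)^{-\frac{s-s_0}{2}} \|u_0\|_{L^p_s(\Omega)},$$
which for $t\geq 2$ is bounded by the desired right-hand side. It thus remains to estimate $\mathcal{U}=u-v$ on $\Omega^{R+4}$ by the same bound.

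For $\mathcal{U}$, the plan is to perform a second cut-off/Bogovski\u{\i} construction, this time \emph{dual} to the one done for $v$: choose $\zeta\in C^\infty_0(\IR^n)$ with $\zeta\equiv 1$ on $B_{R+3}$, $\supp\zeta\subset B_{R+4}$, and let $\mathbb B''$ denote the Bogovski\u{\i} operator on the annulus $D_{R+3}=\{R+3<|x|<R+4\}$. Set
$$\mathcal W(t) = (1-\zeta)\mathcal U(t) + \mathbb B''[\nabla\zeta\cdot\mathcal U(t)]\quad\text{on }\IR^n,$$
which is solenoidal on $\IR^n$ and agrees with $\mathcal U$ on $\Omega^{R+4}$. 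Since $\supp\mathcal U_0\subset\overline{\Omega_{R+3}}$ and $\nabla\zeta$ vanishes there, we have $\mathcal W(0)=0$. Using $(1-\zeta)\nabla\pi=\nabla((1-\zeta)\pi)-(\nabla\zeta)\pi$ to absorb the gradient part into a modified pressure and subsequently applying the whole-space Helmholtz projection $P_0$, one checks that $\mathcal W$ satisfies
$$\partial_t \mathcal W - \Delta\mathcal W = P_0 G(t)\text{ on }\IR^n,\qquad \mathcal W(0)=0,$$
where the source $G(t)$, supported in $\overline{D_{R+3}}$, collects the cut-off terms $2\nabla\zeta\cdot\nabla\mathcal U$, $\Delta\zeta\,\mathcal U$, $(\nabla\zeta)\pi$ together with two Bogovski\u{\i} correction terms. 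Note that $\zeta F=F$ since $\supp F\subset\overline{D_{R+1}}$, so the $F$-contribution drops out.

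By Duhamel on $\IR^n$, $\mathcal W(t)=\int_0^t e^{-(t-\tau)A_0}P_0 G(\tau)\,d\tau$. Since $G(\tau)$ is supported in the fixed compact annulus $\overline{D_{R+3}}$, its $L^p_{s}(\IR^n)$-norm is equivalent to its $L^p(\IR^n)$-norm. Invoking Proposition \ref{interior-domain2} and Lemma \ref{est-mathcalU-tsmall} (together with the corresponding local pressure bound on $D_{R+3}$, obtained from interior Stokes regularity applied on $\Omega_{R+5}\setminus\overline{\Omega_{R+2}}$), one obtains
$$\|G(\tau)\|_{L^p_s(\IR^n)}\leq C\begin{cases}\tau^{-1/2}\|u_0\|_{L^p_s(\Omega)}, & 0<\tau\leq 2,\\ \tau^{-\frac{n}{2p}-\frac{s}{2}}\|u_0\|_{L^p_s(\Omega)}, & \tau>1.\end{cases}$$
Combining this with Proposition \ref{whole-est-basis} applied to the semigroup $e^{-(t-\tau)A_0}$ with input weight $s$ and output weight $s_0$ gives the integrand
$$\|e^{-(t-\tau)A_0}P_0 G(\tau)\|_{L^q_{s_0}(\IR^n)}\leq C(t-\tau)^{-\frac{n}{2}(\frac{1}{p}-\frac{1}{q})}(1+t-\tau)^{-\frac{s-s_0}{2}}\|G(\tau)\|_{L^p_s(\IR^n)}.$$

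Finally, one splits $\int_0^t=\int_0^1+\int_1^{t/2}+\int_{t/2}^{t-1}+\int_{t-1}^t$ for $t\geq 2$ and estimates each piece separately, exactly in the spirit of the proof of Proposition \ref{interior-domain2}. For $\tau\in[0,1]$ one uses the short-time bound on $G$; for $\tau\in[1,t/2]$ one has $t-\tau\asymp t$, so the semigroup factor is $\asymp t^{-\frac{n}{2}(\frac{1}{p}-\frac{1}{q})-\frac{s-s_0}{2}}$ and one integrates the $\tau$-decay of $G$; for $\tau\in[t/2,t-1]$ one has $\tau\asymp t$ and one integrates the semigroup factor, using $\frac{n}{2}(\frac{1}{p}-\frac{1}{q})+\frac{s-s_0}{2}$ carefully (a possible logarithmic loss at critical exponents is absorbed by the strict inequality $\frac{n}{2}(\frac{1}{p}-\frac{1}{q})<1$ and the gap between the powers $-\frac{n}{2}(\frac{1}{p}-\frac{1}{q})-\frac{s-s_0}{2}$ and $-\frac{n}{2p}-\frac{s}{2}$, whose difference $\frac{n}{2q}+\frac{s_0}{2}\geq 0$ provides the needed slack); the short interval $[t-1,t]$ is handled with the unweighted whole-space semigroup bound, which is integrable at $\tau=t$ under the assumption $\frac{n}{2}(\frac{1}{p}-\frac{1}{q})<1$. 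Summing all contributions yields $\|\mathcal W(t)\|_{L^q_{s_0}(\IR^n)}\leq C t^{-\frac{n}{2}(\frac{1}{p}-\frac{1}{q})-\frac{s-s_0}{2}}\|u_0\|_{L^p_s(\Omega)}$ for $t\geq 2$, and since $\mathcal W=\mathcal U$ on $\Omega^{R+4}$, adding the $v$-bound completes the proof.

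The main technical obstacle is two-fold: first, correctly deriving the heat equation for $\mathcal W$ including the pressure contribution $(\nabla\zeta)\pi$ in $G$ and obtaining the matching local bound for $\pi$; second, carrying out the four-piece time integration so that the slowest decay among all pieces is exactly $t^{-\frac{n}{2}(\frac{1}{p}-\frac{1}{q})-\frac{s-s_0}{2}}$, using the inequality $\frac{n}{2p}+\frac{s}{2}\geq \frac{n}{2}(\frac{1}{p}-\frac{1}{q})+\frac{s-s_0}{2}$ to see that the decay coming from $G(\tau)$ alone dominates whenever necessary.
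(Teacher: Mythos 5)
Your construction matches the paper's almost line by line: your $\zeta$ is the paper's $\varphi_{R+1}$, your $\mathcal W$ is the paper's $\mathscr U = \varphi^R\mathcal U - \widehat{\mathbb B}[(\nabla\varphi^R)\cdot\mathcal U]$ with Bogovski\u{\i} on $D_{R+3}$, the source $G$ (including the vanishing $F$-terms and the pressure contributions) is the same, and the Duhamel representation $\mathcal W(t)=\int_0^t e^{-(t-\tau)A_0}P_0G(\tau)\,d\tau$ together with Lemma~\ref{exterior-est1} for $v$ is exactly the route the paper takes.

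However, the time-integration step for the middle interval has a genuine gap. Freezing $(t-\tau)\asymp t$ on $[1,t/2]$ and integrating $\tau^{-\frac{n}{2p}-\frac{s}{2}}$ gives
\[
t^{-\frac{n}{2}(\frac1p-\frac1q)-\frac{s-s_0}{2}}\int_1^{t/2}\tau^{-\frac{n}{2p}-\frac{s}{2}}\,\mathrm{d}\tau,
\]
and if $\frac{n}{2p}+\frac{s}{2}\le 1$ (which is perfectly allowed, e.g.\ $s=0$, $p=q=n$) the $\tau$-integral diverges as $t\to\infty$, producing an extra factor $t^{1-\frac{n}{2p}-\frac{s}{2}}>1$ that destroys the claimed rate. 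The symmetric difficulty occurs on $[t/2,t-1]$ when $\frac{n}{2}(\frac1p-\frac1q)+\frac{s-s_0}{2}\le 1$. Your remark that the gap $\frac{n}{2q}+\frac{s_0}{2}\ge 0$ ``provides the needed slack'' does not close this: working it through shows you would need $\frac{n}{2p}+\frac{s}{2}\ge 1$, which is not assumed. The paper's proof avoids this by exploiting that $G(\tau)$ is supported in a fixed annulus, so its $L^{\bar p}_{s_0}$-norm is controlled for an auxiliary exponent $\bar p<\frac{n}{2}$, $\bar p\le p$; then Proposition~\ref{whole-est-basis} is applied as $L^{\bar p}_s\to L^q_{s_0}$, and a ``swap'' of the powers of $\tau$ and $t-\tau$ based on $\min(\tau,t-\tau)\le\max(\tau,t-\tau)$ moves the exponent $\frac{n}{2\bar p}+\frac{s}{2}>1$ onto whichever factor is being integrated. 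This makes both tails absolutely integrable with the remaining factor frozen at the desired rate $t^{-\frac{n}{2}(\frac1p-\frac1q)-\frac{s-s_0}{2}}$, with no extra growth. You should incorporate this device (or an equivalent one) before the argument is complete; apart from it, the rest of your plan is sound and coincides with the paper's.
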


\vspace{1ex}

\vspace{1ex}

For the proof of Proposition \ref{exterior-est2} we consider the reduction to the whole space problem,
\BLACK
\begin{equation}\label{mathscrU}
\mathscr{U} = \varphi^R \mathcal U-\widehat{\mathbb{B}}\BLACK [(\nabla \varphi^R)\cdot \mathcal U], 
\end{equation}
where $\varphi^R=1-\varphi_{R+1}$ such that $\supp\nabla \varphi^R \subset \overline{D_{R+3}}$ and where $\widehat{\mathbb{B}}$ is the Bogovski\u{\i} operator related to $D_{R+3}$.  
Then $\mathscr{U}(0)=0$ since $\supp\, \mathcal U(0)\subset \overline{D_{R+2}}$, see \eqref{U_0}. \BLACK Moreover, $ \mathscr{U}$ solves the modified inhomogeneous Stokes problem
\begin{align}\label{the-Stokes-eq-modify}
\del_t \mathscr{U}-\Delta \mathscr{U} +\nabla (\varphi^R \pi)=G, \ \  \div \mathscr{U}=0 \;\mbox{ on } \,  \mathbb{R}^n, \  \ 
\ \mathscr{U}(0)=0,   
\end{align} 
on the whole space where $\pi$ is an associated pressure to $u$ in \eqref{the-Stokes-eq-0} \BLACK and
\begin{align}\label{def-g}
\begin{aligned}
G(t) & = -2(\nabla \varphi^R)\cdot \nabla \mathcal U -(\Delta \varphi^R)\mathcal U-\widehat{\mathbb{B}}[\nabla\varphi^R)\Delta\mathcal U] + \Delta\widehat{\mathbb{B}}[(\nabla\varphi^R)\cdot\mathcal U] \\
& \qquad + (\nabla \varphi^R) \pi
+ \widehat{\mathbb{B}}[(\nabla \varphi^R)\cdot \nabla \pi]
+ \widehat{\mathbb{B}}[(\nabla \varphi^R) \cdot F]  - \varphi^R F\BLACK \\
& =: \sum_{j=1}^{ 8\BLACK} G_j(t). 
\end{aligned}
\end{align}

 We first estimate the inhomogeneous term $G$.

\vspace{1ex}

\begin{lem}\label{est-G1}
Let $1< \tilde p\leq p <\infty$,
$-\frac{n}{p} < s_0 \leq s < \frac{n}{p'}$  with $s \geq 0$.  
\BLACK Then there holds 
$$
\|G(t)\|_{L^{\tilde p}_{s_0}(\mathbb{R}^n)} \leq Ct^{-\frac{n}{2p}-\frac{s}{2}}\|u_0\|_{L^p_s(\Omega)},\quad t\geq 2 .
$$
\end{lem}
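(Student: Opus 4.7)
The starting observation is that every summand $G_j(t)$, $j=1,\dots,8$, in \eqref{def-g} is supported in the bounded annulus $\overline{D_{R+3}}$: this is clear for the terms containing $\nabla\varphi^R$ or $\Delta\varphi^R$ directly, and for the terms containing $\widehat{\mathbb B}$ it follows from the support property in \eqref{B}. On this bounded set $\langle x\rangle^{s_0}\sim 1$, and H\"older's inequality with $\tilde p\leq p$ gives
$$ \|G_j(t)\|_{L^{\tilde p}_{s_0}(\mathbb R^n)} \leq C\,\|G_j(t)\|_{L^p(D_{R+3})},\qquad j=1,\dots,8. $$
Thus it suffices to prove $\|G_j(t)\|_{L^p(D_{R+3})} \leq C\, t^{-n/(2p)-s/2}\|u_0\|_{L^p_s(\Omega)}$ for $t\geq 2$.

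\textbf{Velocity, source and Bogovski\u{\i} terms.} For $G_1$, $G_2$, $G_8$ a pointwise bound by $|\nabla\varphi^R|\,|\nabla\mathcal U|$, $|\Delta\varphi^R|\,|\mathcal U|$, respectively $|\varphi^R|\,|F|$ reduces the task to bounding $\|\mathcal U(t)\|_{W^{1,p}(D_{R+3})}$ and $\|F(t)\|_{L^p(D_{R+3})}$. The term $G_4$ is handled directly by \eqref{B} with $k=0$. For $G_3$ rewrite
$(\nabla\varphi^R)\cdot\Delta\mathcal U = \div\{(\nabla\varphi^R)\cdot\nabla\mathcal U\} - \nabla^2\varphi^R:\nabla\mathcal U$
in the spirit of \eqref{commutator} and combine \eqref{B-2} for the divergence part with \eqref{B} for the remainder to obtain $\|G_3\|_{L^p}\leq C\,\|\nabla\mathcal U\|_{L^p(D_{R+3})}$; similarly, \eqref{B} controls $G_7$ by $\|F\|_{L^p(D_{R+3})}$. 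The argument of Proposition \ref{interior-domain2}, applied with cutoff radii shifted by one so that the resulting local estimate lives on $\Omega_{R+4}\supset D_{R+3}$, yields $\|\mathcal U(t)\|_{W^{1,p}(D_{R+3})}\leq C\,t^{-n/(2p)-s/2}\|u_0\|_{L^p_s(\Omega)}$ for $t>1$, while \eqref{est-F1} gives the same decay for $\|F(t)\|_{L^p(D_{R+3})}$. This settles $G_1,G_2,G_3,G_4,G_7,G_8$.

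\textbf{Pressure terms.} For $G_5=(\nabla\varphi^R)\pi$ and $G_6=\widehat{\mathbb B}[(\nabla\varphi^R)\cdot\nabla\pi]$, one writes
$(\nabla\varphi^R)\cdot\nabla\pi = \div\{(\nabla\varphi^R)\pi\} - (\Delta\varphi^R)\pi$
and applies \eqref{B-2} and \eqref{B} to arrive at
$$ \|G_5\|_{L^p(D_{R+3})} + \|G_6\|_{L^p(D_{R+3})} \leq C\,\|\pi(t)\|_{L^p(D_{R+3})}, $$
once $\pi(t)$ is normalized to have vanishing mean on $D_{R+3}$. The problem is thereby reduced to the local pressure bound $\|\pi(t)\|_{L^p(D_{R+3})}\leq C\,t^{-n/(2p)-s/2}\|u_0\|_{L^p_s(\Omega)}$ for $t\geq 2$. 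Using $\partial_t u=-Au$ and \eqref{the-Stokes-eq-0}, one has $\nabla\pi = \Delta u + Au$, and Poincar\'e's inequality on the bounded annulus reduces the bound for $\pi$ further to local $L^p$-decay of $\Delta u(t)$ and $Au(t)$. Writing $u(t) = e^{-(t-1)A}u(1)$ and splitting $u(1)$ by the cutoff-Bogovski\u{\i} decomposition \eqref{U_0}, one reduces matters at last to the compactly-supported-data estimate \eqref{local-decay-k}, which supplies the required $t^{-n/2}$-decay (hence even better than $t^{-n/(2p)-s/2}$) of $\Delta u$ and $Au$ on $D_{R+3}$ for $t\geq 2$.

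\textbf{Main obstacle.} The delicate step is the local pressure estimate: unlike the velocity, $\pi$ is only defined up to a constant and is not directly produced by the Stokes semigroup, so obtaining the sharp temporal decay $t^{-n/(2p)-s/2}$ on the fixed annulus $D_{R+3}$ forces one to combine the identity $\nabla\pi=\Delta u+Au$, the sharp local decay \eqref{local-decay-k} which is only available for data compactly supported in $B_R$, and a cutoff-Bogovski\u{\i} reduction at time $t=1$ of the kind already used for $\mathcal U_0$ in \eqref{U_0}. Once this pressure bound is in place, the eight pieces combine to give the claimed estimate for $G(t)$.
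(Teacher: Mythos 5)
Your outline of the ``geometric'' steps --- observing that every $G_j$ is supported in the bounded annulus, so that the $\langle x\rangle^{s_0}$ weight is harmless and H\"older passes from $L^{\tilde p}$ to $L^p$, then reducing $G_1,\dots,G_4$ via the commutator identity \eqref{commutator} and the Bogovski\u{\i} bounds \eqref{B}, \eqref{B-2} to local $W^{1,p}$ bounds for $\mathcal U$ --- matches the paper's argument. Two minor points: first, $G_7=G_8=0$ identically, since $\supp F(\tau)\subset\overline{D_{R+1}}$ while $\varphi^R$ and $\nabla\varphi^R$ are supported in $\{|x|\ge R+3\}$; the paper uses this to dispose of those two terms at once. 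Second, $G_4=\Delta\widehat{\mathbb B}[(\nabla\varphi^R)\cdot\mathcal U]$ requires \eqref{B} with $k=1$, not $k=0$, since $\Delta$ costs two derivatives.

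The genuine gap is in your pressure step. You propose writing $u(t)=e^{-(t-1)A}u(1)$ and splitting $u(1)$ by the cutoff--Bogovski\u{\i} decomposition \eqref{U_0} to ``reduce to the compactly-supported-data estimate \eqref{local-decay-k}.'' But the cutoff produces $u(1)=\hat{\mathcal U}_0+\hat v_0$ with $\hat v_0=(1-\varphi_R)u(1)+\mathbb B[(\nabla\varphi_R)\cdot u(1)]$ \emph{not} compactly supported, and $e^{-(t-1)A}\hat v_0$ is a Stokes (not heat) semigroup orbit to which neither \eqref{local-decay} (no $\partial_t,\nabla^2$) nor \eqref{local-decay-k} (needs compact support) applies. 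So the proposed reduction does not yield the $t^{-n/2}$ decay of $\Delta u$ and $\partial_t u$ on $D_{R+3}$. The route the paper has in mind (its appeal to Proposition~\ref{interior-est-base} is admittedly terse) is to work with the already-constructed $\mathcal U=u-v$: since $v$ solves a heat equation and carries no pressure, $\nabla\pi=-\partial_t\mathcal U+\Delta\mathcal U-F$, and $F\equiv 0$ on $D_{R+3}$. The Duhamel representation \eqref{form-mathcalU} then writes $\mathcal U(t)=e^{-tA}\mathcal U_0-\int_0^t e^{-(t-\tau)A}F(\tau)\,\mathrm{d}\tau$ with $\mathcal U_0$ supported in $\overline{\Omega_{R+3}}$ and $F(\tau)$ supported in $\overline{D_{R+1}}$, both compactly supported; \eqref{local-decay-k} is then applicable to each piece (with radii shifted so the target annulus $D_{R+3}$ is covered), and, after combining with Poincar\'e on $D_{R+3}$ and $s\ge 0$, one arrives at $\|\pi(t)\|_{L^p(D_{R+3})}\le C t^{-n/2}\|u_0\|_{L^p_s}\le C t^{-n/(2p)-s/2}\|u_0\|_{L^p_s}$ for $t\ge 2$. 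Your instinct that the pressure is the delicate step is correct, but the fresh cutoff at $t=1$ does not actually close it; you need to propagate the compact-support structure through $(\mathcal U_0,F)$ rather than through a one-time splitting of $u(1)$.
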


\begin{proof}
First let $\tilde p=p$. \BLACK Due to the support properties of $\varphi^R$ and $F$ we see immediately that $G_7=G_8=0$. 
Since $\mathcal U=u-v$, the cut-off property of $\nabla \varphi^R$, properties of the Bogovski\u{\i} operator \eqref{B} and \eqref{B-2},  rewriting the term $(\nabla \varphi^R)\cdot \Delta \mathcal U$ in the form $\div\!\{\nabla \varphi^R \cdot  \nabla \mathcal U\} - 
\nabla^2 \varphi^R:\nabla\mathcal U$, {\em cf.} \eqref{commutator}, and
 local decay estimates in Proposition \ref{interior-domain2} and Lemma \ref{interior-domain1}, yield the estimates for $G_{j}$, $j=1,2,3,4$.  

Concerning $G_5$ and $G_6$ we may assume that the pressure $\pi$ in \eqref{the-Stokes-eq-0} has a vanishing integral mean on $D_{R+3}$ for a.a. $t>0$ so that by the Poincar\'e-Friedrichs inequality 
\begin{align*} 
\|\pi(t)\|_{L^p(D_{R+3})} \leq C \|\nabla \pi(t)\|_{L^p(D_{R+3})}. 
\end{align*}
Moreover, the identity $\nabla \pi = -\del_t  u+ \Delta u$, Proposition \ref{interior-est-base} and the assumption $s<\frac{n}{p'}$ imply that 
$$\|\nabla \pi(t)\|_{L^p(D_{R+3})} \leq Ct^{-\frac{n}{2}}\|u_0\|_{L^p_s(\Omega)} \leq Ct^{-\frac{n}{2p}-\frac{s}{2}} \|u_0\|_{L^p_s(\Omega)}\quad  \textrm{  for a.a. }\; t\geq 2.$$ 
Since $\supp G(t)\subset \overline \Omega_{R+4}$, by H\"older's inequality $\|G(t)\|_{L^{\tilde p}_{s_0}(\mathbb{R}^n)}  \leq c\|G(t)\|_{L^{p}_{s_0}(\mathbb{R}^n)}$ for any $1<\tilde p<p$. \BLACK  Now Lemma \ref{est-G1} is proved.
\end{proof}

\begin{rem}\label{rem1}
{\rm 
By virtue of the cut-off property of all the terms in $G$, see \eqref{def-g}, \BLACK we can take $s$ and 
$s_0$ satisfying  
$-\frac{n}{p}< s_0 \leq s < \frac{n}{p'}$ with $s \geq 0$.
Furthermore, we can choose $\tilde{p}$ close to $1$ independent of $s$ and $s_0$ for the inhomogeneous weight 
$(1+|x|)^s$.  
These facts also hold for Lemma \ref{est-G2} below. 
}
\end{rem}

\vspace{1ex}
For $0<t\leq 2$ we get another decay rate of $G(t)$.
\vspace{1ex}

\begin{lem}\label{est-G2}
Let $1< \tilde{p} \leq   p <\infty$ and $-\frac{n}{p}< s_0 \leq s < \frac{n}{p'}\BLACK $ with $s \geq 0$.  
\BLACK Then  
there holds  
$$
\|G(t)\|_{L^{\tilde{p}}_{s_0}(\mathbb{R}^n)} \leq C t^{-\frac12-\frac1{2p}} \BLACK \|u_0\|_{L^p_s(\Omega)},\quad  0< t< 2.
$$  
\end{lem}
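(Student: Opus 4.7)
My plan is to mimic the architecture of Lemma \ref{est-G1}, replacing every long-time Stokes estimate by its small-time counterpart. First I observe that $G_7$ and $G_8$ vanish identically: $\supp F(t)\subset\overline{D_{R+1}}$, while both $\supp\nabla\varphi^R$ and $\supp\varphi^R$ are contained in $\{|x|\geq R+3\}$. Moreover, all of $G_1,\ldots,G_6$ are supported in the bounded annulus $\overline{D_{R+3}}$, so the passage from $\tilde p=p$ to arbitrary $1<\tilde p<p$ is a routine H\"older inequality as in Remark \ref{rem1}; I can therefore concentrate on the case $\tilde p=p$.

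For the velocity-type contributions $G_1,\ldots,G_4$, the argument is parallel to Lemma \ref{est-G1}. Writing $\mathcal{U}=u-v$, using the commutator formula $(\nabla\varphi^R)\cdot\Delta\mathcal{U}=\div\{\nabla\varphi^R\cdot\nabla\mathcal{U}\}-\nabla^2\varphi^R:\nabla\mathcal{U}$ (\emph{cf.}\ \eqref{commutator}), and invoking \eqref{B}--\eqref{B-2}, each of these terms is dominated by $C\|\mathcal{U}(t)\|_{W^{1,p}_{s_0}(\Omega_{R+4})}$. The needed input is a version of Lemma \ref{est-mathcalU-tsmall} on the slightly enlarged set $\Omega_{R+4}$, obtained by rerunning that proof with the auxiliary cut-offs $\varphi_R,\psi_R$ chosen to have marginally wider supports; this yields a bound of order $t^{-1/2}\|u_0\|_{L^p_s(\Omega)}$ for $0<t\leq 2$, well within the target rate.

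The main obstacle is the pair of pressure terms $G_5$ and $G_6$. Normalizing $\pi$ so that it has vanishing integral mean on $D_{R+3}$, the naive chain (Poincar\'e plus $\nabla\pi=\Delta u-\partial_t u$ plus small-time $L^p$ Stokes estimates) yields only the rate $t^{-1}$, which is too slow. To improve it I would invoke a local de~Rham/Ne\v{c}as pressure estimate of the form $\|\pi\|_{L^p(D_{R+3})}\leq C\bigl(\|\nabla u\|_{L^p(\Omega_{R+4})}+\|\partial_t u\|_{W^{-1,p}(\Omega_{R+4})}\bigr)$ and control the two pieces separately: the first by the sharp small-time bound $\|\nabla u(t)\|_{L^p(\Omega)}\leq Ct^{-1/2}\|u_0\|_{L^p(\Omega)}$ from $\eqref{e-tAt}_3$; the second by interpolating $\partial_tu=-Au$ between $W^{-2,p}$ (uniformly bounded in $t$, since $\|u(t)\|_{L^p}\leq C\|u_0\|_{L^p}$) and $L^p$ (bounded by $Ct^{-1}\|u_0\|_{L^p}$), giving at the midpoint $W^{-1,p}$ the bound $Ct^{-1/2}\|u_0\|_{L^p}$. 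Thus $\|\pi(t)\|_{L^p(D_{R+3})}\leq Ct^{-1/2}\|u_0\|_{L^p_s(\Omega)}$. Then $G_5=(\nabla\varphi^R)\pi$ is controlled directly, while for $G_6$ the identity $(\nabla\varphi^R)\cdot\nabla\pi=\div\{(\nabla\varphi^R)\pi\}-(\Delta\varphi^R)\pi$, combined with \eqref{B-2} on the divergence piece and with $\widehat{\mathbb{B}}$ applied to the mean-adjusted remainder, reduces everything to the same pressure bound. Collecting all pieces yields $\|G(t)\|_{L^p_{s_0}(\mathbb{R}^n)}\leq Ct^{-1/2}\|u_0\|_{L^p_s(\Omega)}$ for $0<t\leq 2$, which a fortiori implies the announced rate $Ct^{-1/2-1/(2p)}\|u_0\|_{L^p_s(\Omega)}$ since $t$ is bounded.
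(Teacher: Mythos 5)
Your architecture — reduce $G_1,\ldots,G_4$ to a local $W^{1,p}$ bound on $\mathcal U$ and reduce $G_5,G_6$ to an $L^p$ bound on $\pi$ over the annulus, then pass from $\tilde p=p$ to $\tilde p<p$ by H\"older on the compact support — matches the paper's proof, and the handling of the velocity terms is fine (including the harmless enlargement of Lemma~\ref{est-mathcalU-tsmall} from $\Omega_{R+2}$ to a set containing $D_{R+3}$). The problem is the pressure estimate, where you claim the rate $t^{-1/2}$.

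The interpolation scheme does not close. The endpoint $\|Au\|_{W^{-2,p}(\Omega_{R+4})}\le C\|u\|_{L^p}$ is the crux, and it is not available against the test functions the de~Rham/Ne\v cas inequality requires. The Ne\v cas inequality for $\|\pi\|_{L^p(D)}$ asks you to control $\langle\nabla\pi,\phi\rangle$, hence $\langle\partial_t u,\phi\rangle=-\langle Au,\phi\rangle$, for \emph{arbitrary} $\phi\in W^{1,p'}_0(D)$, not for solenoidal ones. Writing $Au=-\Delta u+\nabla\pi$, the pairing $\langle Au,\phi\rangle$ contains $\langle\nabla\pi,\phi\rangle=-\langle\pi,\div\phi\rangle$, which is exactly the quantity you are trying to bound — so this route is circular. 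If instead you pass through the solenoidal projection, $\langle Au,\phi\rangle=\langle Au,P\phi\rangle=-\langle\Delta u,P\phi\rangle$, then $P\phi=\phi-\nabla\varpi$ with $\nabla\varpi=(I-P)\phi$ a nonlocal gradient part that is \emph{not} compactly supported and does not vanish on $\partial\Omega$; integrating by parts in $\langle\Delta u,\nabla\varpi\rangle$ then produces a boundary term $\langle n\cdot\nabla u,\nabla\varpi\rangle_{\partial\Omega}$. This is precisely what the paper's Lemma~\ref{pressure-est-crucial} computes, and it is what forces the trace factor $\|\nabla u\|_{L^p(\partial\Omega)}\le C\|\nabla^2 u\|^{1/p}\|\nabla u\|^{1-1/p}\sim t^{-1/2-1/(2p)}$. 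The $1/(2p)$ loss is not an artefact: it is the cost of taking a trace of $\nabla u$ for $u_0\in L^p$ only, as one sees already for the Dirichlet heat semigroup, and an interior duality argument cannot evade it because $\pi$ on $D_{R+3}$ carries boundary information through the nonlocal pressure equation. So your claimed bound $\|\pi(t)\|_{L^p(D_{R+3})}\le Ct^{-1/2}\|u_0\|_{L^p_s}$ is unjustified (and very likely false), even though the Lemma itself would follow \emph{a fortiori} were it true. If instead you plug in the paper's actual pressure bound~\eqref{pressure-estimate-3} with rate $t^{-\frac12(1+\frac1p)}$, the rest of your argument immediately gives the statement as asserted.
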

\vspace{-4mm}

In order to prove Lemma \ref{est-G2}, we prepare the following estimate for the pressure $\pi$. 

\vspace{1ex}

\begin{lem}\label{pressure-est-crucial}
Let $1<p<\infty$ and $D$ be a bounded smooth domain such that $\overline D  \subset\Omega$. Then the pressure term $\pi(t)$ defined by \eqref{the-Stokes-eq-0} satisfies the estimate
\begin{equation}\label{pilocal}
\|\pi(t)\|_{L^p(D)} \leq C \big(\|\nabla u\|_{L^p(\Omega)} + \|\nabla^2 u\|_{L^p(\Omega)}^{1/p}\, \|\nabla u\|_{L^p(\Omega)}^{1-1/p} \big),\quad 0<t<2. \BLACK  
\end{equation}
In particular,  for any $s\geq 0$, \BLACK
\begin{align}\label{pressure-estimate-3}
\|\pi(t)\|_{L^p(D_{R+3})}
\leq C t^{-\frac12\big(1+\frac1p\big)} \BLACK\|u_0\|_{L^p_s(\Omega)},\quad 0<t<2. 
\end{align}
\end{lem}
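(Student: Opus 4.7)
The plan is to prove \eqref{pilocal} by a duality argument and then deduce \eqref{pressure-estimate-3} by inserting the small-time Stokes decay estimates from Theorem \ref{Stokes-est-exterior}. Throughout assume $\pi(t)$ is normalized to have zero integral mean on $D$, so that $\|\pi\|_{L^p(D)}$ equals the quotient norm $\|\pi\|_{L^p(D)/\IR}$. By duality this norm is the supremum of $\int_D \pi f\,\dx$ over $f\in L^{p'}(D)$ of unit norm with $\int_D f\,\dx=0$, and the Bogovski\u{\i} operator on $D$ represents every such $f$ as $f=\divergence\varphi$ with $\varphi\in W^{1,p'}_0(D)\subset W^{1,p'}_0(\Omega)$ (extended by zero) and $\|\nabla\varphi\|_{L^{p'}(\Omega)}\leq C$. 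Since $\nabla\pi = \Delta u - \del_t u$ by \eqref{the-Stokes-eq-0} and $\varphi$ vanishes in a neighborhood of $\partial\Omega$, integration by parts yields
$$
\int_D \pi\,\divergence\varphi\,\dx = \int_\Omega \nabla u : \nabla\varphi\,\dx + \int_\Omega \del_t u\cdot\varphi\,\dx,
$$
and the first term is at once bounded by $\|\nabla u\|_{L^p(\Omega)}\|\nabla\varphi\|_{L^{p'}(\Omega)}$, producing the leading summand in \eqref{pilocal}.

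The key step is the treatment of $\int_\Omega \del_t u\cdot\varphi$. Using $\del_t u = -Au = P\Delta u$ and the Helmholtz decomposition $\varphi = P\varphi + \nabla q$ on $\Omega$, where $q$ solves $\Delta q = \divergence\varphi$ in $\Omega$ with $\partial_n q = 0$ on $\partial\Omega$, the orthogonality of $\del_t u\in L^p_\sigma(\Omega)$ to $\nabla q$ gives $\int_\Omega \del_t u\cdot\varphi = \int_\Omega \del_t u\cdot P\varphi$. Rewriting $\del_t u = \Delta u - \nabla\pi$ and noting $P\varphi\perp\nabla\pi$, this equals $\int_\Omega \Delta u\cdot P\varphi$, so a second integration by parts on $\Omega$ produces
$$
\int_\Omega \del_t u\cdot\varphi\,\dx = -\int_\Omega \nabla u : \nabla(P\varphi)\,\dx + \int_{\partial\Omega}(\partial_n u)\cdot P\varphi\,\dsigma.
$$
Standard $L^{p'}$-Neumann theory on $\Omega$ provides $\|\nabla(P\varphi)\|_{L^{p'}(\Omega)}\leq C\|\nabla\varphi\|_{L^{p'}(\Omega)}$, so the bulk integral again contributes $C\|\nabla u\|_{L^p(\Omega)}\|\nabla\varphi\|_{L^{p'}(\Omega)}$.

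The interpolation term $\|\nabla^2 u\|^{1/p}\|\nabla u\|^{1-1/p}$ arises entirely from the boundary integral. Because $\varphi$ vanishes near $\partial\Omega$ one has $P\varphi = -\nabla q$ on $\partial\Omega$, and the trace theorem combined with Neumann regularity gives $\|P\varphi\|_{L^{p'}(\partial\Omega)}\leq C\|\nabla\varphi\|_{L^{p'}(\Omega)}$. The Gagliardo trace-interpolation inequality applied to $\nabla u\in W^{1,p}(\Omega)$ reads
$$
\|\nabla u\|_{L^p(\partial\Omega)} \leq C\big(\|\nabla u\|^{1-1/p}_{L^p(\Omega)}\|\nabla^2 u\|^{1/p}_{L^p(\Omega)} + \|\nabla u\|_{L^p(\Omega)}\big).
$$
Applying H\"older's inequality to the surface integral, collecting all contributions, and taking the supremum over $f$ yields \eqref{pilocal}.

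For the specific bound \eqref{pressure-estimate-3}, the domain $D_{R+3}$ satisfies $\overline{D_{R+3}}\subset\Omega$, and inserting into \eqref{pilocal} the small-time Stokes bounds $\|\nabla u(t)\|_{L^p(\Omega)}\leq Ct^{-1/2}\|u_0\|_{L^p(\Omega)}$ and $\|\nabla^2 u(t)\|_{L^p(\Omega)}\leq Ct^{-1}\|u_0\|_{L^p(\Omega)}$ from $\eqref{e-tAt}_3$, together with $\|u_0\|_{L^p(\Omega)}\leq\|u_0\|_{L^p_s(\Omega)}$ (as $s\geq 0$ implies $\langle x\rangle^s\geq 1$), gives $\|\pi(t)\|_{L^p(D_{R+3})}\leq C\big(t^{-1/2}+t^{-\frac12(1+1/p)}\big)\|u_0\|_{L^p_s(\Omega)}$; for $0<t<2$ the second term dominates, yielding the claim. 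The main obstacle in the argument is the rigorous justification of $L^{p'}$-Neumann regularity on the unbounded exterior domain $\Omega$ and the attendant trace bounds for $\nabla q$ on $\partial\Omega$; these, however, are classical consequences of the $C^3$-regularity of $\partial\Omega$ assumed in Theorem \ref{existence-per_2}, so that the Gagliardo trace-interpolation inequality is the only ingredient producing precisely the exponent $1/p$ on the right-hand side of \eqref{pilocal}.
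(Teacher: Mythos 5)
Your proof is correct and follows essentially the same route as the paper's own: duality via Bogovski\u{\i} on $D$, Helmholtz-splitting the test field, integration by parts that trades $\nabla\pi=\Delta u-\del_t u$ for a bulk term in $\nabla u$ and a boundary term in $\partial_n u$, the trace-interpolation inequality for $\nabla u$, and then inserting the small-time Stokes bounds from $\eqref{e-tAt}_3$. The only cosmetic difference is that you carry the solenoidal part $P\varphi$ through the computation, whereas the paper works with the gradient part $\nabla\varpi=(I-P)\Phi$ (which coincides with $-P\varphi$ on $\partial\Omega$ since $\varphi|_{\partial\Omega}=0$); after simplification your intermediate identity $\int_D\pi\,\div\varphi=\int_\Omega\nabla u:\nabla^2 q+\int_{\partial\Omega}\partial_n u\cdot P\varphi$ is literally the paper's $\langle\nabla u,\nabla^2\varpi\rangle_\Omega-\langle n\cdot\nabla u,\nabla\varpi\rangle_{\partial\Omega}$, and the $W^{2,p'}$-Neumann regularity you invoke is exactly the paper's estimate \eqref{varpi}.
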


\begin{proof}
Let $D$ be a bounded smooth domain such that $\overline D  \subset\Omega$ and assume that $\pi$ satisfies $\int_D \pi(t)\dx = 0$ for all $0<t<2$. 
Then with the help of Bogovski\u\i's operator for the domain $D$ for any $\varphi\in L^{p'}_m(D)$ 
there exists  $\Phi=\mathbb B\varphi \in W^{1,p'}_0(D)$ such that $\div \Phi=\varphi$ and $\|\Phi\|_{W^{1,p'}(D)}\leq c \|\varphi\|_{L^{p'}(D)}$. 
Obviously, we may extend $\Phi$ by $0$ from $D$ to $\Omega$ yielding a function $\Phi\in W^{1,p'}_0(\Omega)$ satisfying $\|\Phi\|_{W^{1,p'}(\Omega)} = \|\Phi\|_{W^{1,p'}(D)}$. Finally, let $\nabla\varpi=(I-P)\Phi = (I-P)\mathbb B\varphi$ denote the pressure part of $\Phi$ in its Helmholtz decomposition. Note that 
\begin{equation}\label{varpi}
\|\nabla\varpi\|_{L^{p'}(\Omega)}+\|\nabla^2\varpi\|_{L^{p'}(\Omega)} \leq c\|\Phi\|_{W^{1,p'}(\Omega)} \leq \|\varphi\|_{L^{p'}(D)}.
\end{equation}

Now we get for any $t\in(0,2)$ that $(I-P)\dfrac{\del}{\del t}u(t)=0$ in $\Omega$ so that with the outer normal $\textsl{n}$ on $\partial\Omega$
\begin{align}\label{pressure-id}
\begin{aligned}
\langle \pi(t),\varphi \rangle_D & = \langle \pi(t), \div\Phi \rangle_D \\
& = - \langle \nabla\pi(t), \Phi \rangle_\Omega\\
& = \langle (-\Delta) u(t), (I-P)\Phi \rangle_\Omega\\
& = \langle (-\Delta) u(t), \nabla\varpi \rangle_\Omega\\
&  = \langle \nabla u(t), \nabla^2\varpi\rangle_\Omega - \langle \textsl{n}\cdot\nabla u, \nabla\varpi\rangle_{\partial\Omega}. 
\end{aligned} 
\end{align}
Combining \eqref{varpi} and \eqref{pressure-id}  we get by trace estimates (\cite[Theorem II.4.1]{Galdi-1},  \cite[Prop. 1.50]{Shi-monogr.vol.1})  that
\begin{align*} 
\begin{aligned}
|\langle \pi(t),\varphi \rangle_D| & \leq C(\|\nabla u(t)\|_{L^{p}(\Omega)} + \|\nabla u(t)\|_{L^{p}(\partial\Omega)}) \|\varphi\|_{L^{p'}(D)}\\
& \leq C\big( \|\nabla u\|_{L^p(\Omega)} + \|\nabla^2 u\|_{L^p(\Omega)}^{1/p}\, \|\nabla u\|_{L^p(\Omega)}^{1-1/p} \big) \,\|\varphi\|_{L^{p'}(D)}.
\end{aligned} 
\end{align*}
A final duality argument proves \eqref{pilocal}. 

 Then $\eqref{e-tAt}_3$ and \eqref{pilocal} imply  for $D=D_{R+3}$ and $0<t<2$ the estimate \eqref{pressure-estimate-3}. \BLACK
\end{proof}


\begin{proof}[Proof of Lemma  \ref{est-G2}] 
Recall that $G_7=G_8=0$ in \eqref{def-g}.  The cut-off property of $\nabla \phi^R$, properties of the Bogovski\u{\i} operator \eqref{B} and \eqref{B-2}, the identity $(\nabla \varphi^R)\cdot \Delta \mathcal U = \div\!\{\nabla \varphi^R \cdot  \nabla \mathcal U\} - 
\nabla^2 \varphi^R:\nabla\mathcal U$, \BLACK and Lemma \ref{est-mathcalU-tsmall}  for $\mathcal U$ yield the estimates for $G_{j}$,  $j=1,2,3,4$.

Obviously, \eqref{pressure-estimate-3}  yields the estimate $\|G_5(t)\|_{L^p(\IR^n)} = \|(\nabla \varphi^R) \pi(t)\|_{L^p(\IR^n)}\leq Ct^{-\frac12-\frac{1}{2p}}$. As for $G_6$, we apply the property of the Bogovski\u{\i} operator \eqref{B-2} to
$$
(\nabla \varphi^R)\cdot \nabla \pi= \div\{(\nabla \varphi^R)\pi\}-(\Delta \varphi^R)\pi
$$
and use the same argument as that in $G_5$.  
\end{proof}

\begin{proof}[Proof of Proposition \ref{exterior-est2}]  
Let $P_0$ be the Helmholtz projection on the whole space. It follows from applying $P_0$ to \eqref{the-Stokes-eq-modify} and the Duhamel principle that 
\begin{align}\label{mathscr-est-form}
    \begin{aligned} 
\mathscr{U}(t) & = \int_0^t e^{-(t-\tau) A_0} P_0 G(\tau) \dtau\\  
& = \Bigg(\int_{0}^1 + \int_{1}^{t-1} + \int_{t-1}^t\Bigg)
  e^{-(t-\tau) A_0} P_0 G \dtau\\
& =: L_1(t) +L_2(t) +L_3(t). 
\end{aligned}
\end{align}
We see from Proposition \ref{whole-est-basis} and Lemma \ref{est-G2} for $L_1$ that
\begin{align*}
\|L_1(t)\|_{L^{q}_{s_0}(\mathbb{R}^n) } 
& \leq C\int_{0}^{1} (t-\tau)^{-\frac{n}{2}\big(\frac{1}{p}-\frac{1}{q}\big)-\frac{s-s_0}{2}} \tau^{-1+\frac1{2p'}\BLACK } \dtau\, \|u_0\|_{L^p_{s}(\Omega)}\\
& \leq C t^{-\frac{n}{2}\big(\frac{1}{p}-\frac{1}{q}\big)-\frac{s-s_0}{2}}\|u_0\|_{L^p_{s}(\Omega)},\quad t\geq 2.
\end{align*}
As for $L_2$, Proposition \ref{whole-est-basis} with exponents $(q, \bar{p})$, $\bar{p}<\frac{n}{2}$,  $\bar{p}\leq p$, \BLACK and Lemma \ref{est-G1} imply that   
$$
\|L_2(t)\|_{L^{q}_{s_0}(\mathbb{R}^n)} \leq C\Bigg(\int_{1}^{t/2} + \int_{t/2}^{t-1} \Bigg) (t-\tau)^{-\frac{n}{2}\big( \frac{1}{\bar{p}} - \frac{1}{q}\big)-\frac{s-s_0}{2}} \tau^{-\frac{n}{2p}-\frac{s}{2}} \dtau\, \|u_0\|_{L^p_{s}(\Omega)}.
$$
Concerning the integral on $(1,\frac{t}{2})$ there holds $\frac{1}{t-\tau}\leq \frac{1}{\tau}$. Hence
$$
(t-\tau)^{-\frac{n}{2}\big(\frac{1}{ \bar{p}}-\frac{1}{q}\big)-\frac{s-s_0}{2}} \tau^{-\frac{n}{2p}-\frac{s}{2}}  \leq (t-\tau)^{-\frac{n}{2}\big(\frac{1}{p}-\frac{1}{q}\big)-\frac{s-s_0}{2}} \tau^{-\frac{n}{2 \bar{p}}-\frac{s}{2}} .
$$
For the integral on $(\frac{t}{2}, t-1)$ there holds $\frac{1}{t-\tau}\geq \frac{1}{\tau}$ so that
$$
(t-\tau)^{-\frac{n}{2}\big(\frac{1}{\bar{p}}-\frac{1}{q}\big)-\frac{s-s_0}{2}} \tau^{-\frac{n}{2p}-\frac{s}{2}}  \leq (t-\tau)^{-\frac{n}{2 \bar{p}}-\frac{s}{2}} \tau^{-\frac{n}{2}\big(\frac{1}{p}-\frac{1}{q}\big)-\frac{s-s_0}{2}} .
$$
Therefore, since $\bar{p}<\frac{n}{2}$, we see that both integrals below converge as $t\to\infty$ and  
\begin{align*}
\|L_2(t)\|_{L^{q}_{s_0}(\mathbb{R}^n) }  & \leq \Bigg( \int_{1}^{t/2} (t-\tau)^{-\frac{n}{2}\big(\frac{1}{p}-\frac{1}{q}\big)-\frac{s-s_0}{2}} \tau^{-\frac{n}{2 \bar{p}}-\frac{s}{2}\BLACK} \dtau \\
& \qquad + \int_{t/2}^{t-1} (t-\tau)^{-\frac{n}{2\bar{p}} -\frac{s}{2}}\tau^{-\frac{n}{2} \big(\frac{1}{p}-\frac{1}{q}\big)-\frac{s-s_0}{2}} \dtau \Bigg)
\|u_0\|_{L^p_{s}(\Omega)}\\
& \leq C t^{-\frac{n}{2}\big(\frac{1}{p}-\frac{1}{q}\big)-\frac{s-s_0}{2}}
\|u_0\|_{L^p_{s}(\Omega)}, \quad t\geq 2. 
\end{align*}  
As for $L_3$, since by assumption $\frac{n}{2} \big(\frac{1}{p}-\frac{1}{q}\big)\in [0,1)$,  
Proposition \ref{whole-est-basis} and Lemma \ref{est-G1} imply for $t>2$ that 
\begin{align*}
\|L_3(t)\|_{L^{q}_{s_0}(\mathbb{R}^n) } & \leq 
C\displaystyle\int_{t-1}^{t}
(t-\tau)^{ -\frac{n}{2} \big(\frac{1}{p}-\frac{1}{q}\big)} \tau^{-\frac{n}{2p}-\frac{s}{2}} \dtau \|u_0\|_{L^p_{s}(\Omega)}\\
& \leq C_\kappa  t^{-\frac{n}{2p} -\frac{s}{2}} \|u_0\|_{L^p_{s}(\Omega)}\BLACK \\
& \leq C_\kappa t^{-\frac{n}{2}\big(\frac{1}{p}-\frac{1}{q}\big)-\frac{s-s_0}{2}} \|u_0\|_{L^p_{s}(\Omega)}, \quad t\geq 2.
\end{align*}
By $\mathscr{U}=\mathcal U = u-v$ on $\Omega^{R+4}$, see \eqref{mathscrU}, and Lemma \ref{exterior-est1}, we obtain the desired estimate for $u$. This completes the proof of Proposition \ref{exterior-est2}. 
\end{proof}

\vspace{1ex}

Now we are in a position to prove the weighted $L^p$-$L^q$ estimates for the Stokes semigroup as given in Theorem \ref{Stokes-w}. Note that due to re-scalings of the time parameter $t$ it suffices to prove \eqref{etA>1}, \eqref{n-etA>} for $t>2$ and \eqref{etA<} for $t<1$. \BLACK


\begin{proof}[Proof of Theorem \ref{Stokes-w}]
The estimate \eqref{etA>1} for $u(t)=e^{-tA}u_0$, \BLACK $t>2$, is directly obtained by combining Proposition \ref{interior-domain2} with Proposition \ref{exterior-est2}.  
Indeed, concerning estimates on the inner domain, 
\eqref{w-local-decay-R} with $p$ in Proposition \ref{interior-domain2} together with a Sobolev embedding imply that for $p<n$ and $n \big(\frac{1}{p}-\frac{1}{q}\big) < 1\BLACK$  
\BLACK
$$ \|u(t)\|_{L^q_{s_0}(\Omega_R)}  \leq c\|u(t)\|_{W^{1,p}_{s_0}(\Omega_R)} \leq ct^{-\frac{n}{2p} - \frac{s}{2}} \|u_0\|_{L^p_s(\Omega)} \leq ct^{-\frac{n}{2}\big(\frac1p -\frac1q\big) - \frac{s-s_0}{2}} \|u_0\|_{L^p_s(\Omega)}; $$ 
in the last step we used that $s_0>-\frac{n}{q}$ and $t>2$. \BLACK On the other hand, if 
$p\geq n$, there exists $\tilde p<n$ such that $\tilde p < q <\tilde p^*  = \frac{n\tilde p}{n-\tilde p}\BLACK$; then \eqref{w-local-decay-R} with $p$ in Proposition \ref{interior-domain2} together with a Sobolev embedding implies again that  
\begin{align*}
\|u(t)\|_{L^q_{s_0}(\Omega_R)} & \leq c\|u(t)\|_{W^{1,\tilde p}_{s_0}(\Omega_R)} \leq  c\|u(t)\|_{W^{1,p}_{s_0}(\Omega_R)}\\
& \leq  ct^{-\frac{n}{2p} - \frac{s}{2}} \|u_0\|_{L^p_s(\Omega)} \leq ct^{-\frac{n}{2}\big(\frac1p -\frac1q\big)- \frac{s-s_0}{2}} \|u_0\|_{L^p_s(\Omega)}.
\end{align*}
This together with Proposition \ref{exterior-est2}, the estimate on an outer domain away from the boundary,  say, on $\Omega^R$ instead of $\Omega^{R+4}$, \BLACK derives \eqref{etA>1} for $t>2$. 
We can remove the condition $n (\frac{1}{p}-\frac{1}{q})\leq1$ by properties of the semigroup. In fact, we first consider the case 
$$
1 \leq n\Big(\frac{1}{p}-\frac{1}{q}\Big)<2. 
$$
For fixed $p<q$, \BLACK there exists $p_1$ such that we can divide the interval $(p,q)$ by $p_1$ satisfying that 
$n\big(\frac{1}{p}-\frac{1}{p_1}\big)<1$ and $n\big(\frac{1}{p_1}-\frac{1}{q}\big)<1$. 
Hence, applying \eqref{etA>1} for $t>2$ twice under the condition $n\big(\frac{1}{p}-\frac{1}{q}\big)<1$, we obtain the estimate for fixed $p, q$.  
More precisely,  
\begin{equation}\label{u(t)-semigroup-est}
\|e^{-tA}u_0\|_{L^q_{s_0}(\Omega)} \leq C\|e^{-t A/2}\|_{\mathcal{L}(L^{p_1}_{s_0}, L^{q}_{s_0}\BLACK)} \|e^{-tA/2} u_0\|_{L^{p_1}_{s_0}(\Omega)}\leq 
C t^{-\frac{n}{2}\big(\frac{1}{p}-\frac{1}{q}\big)-\frac{s-s_0}{2}}\|u_0\|_{L^p_s(\Omega)}.      
\end{equation}
For the case $n\big(\frac{1}{p}-\frac{1}{q}\big)<k$, $k \geq 2$,  mathematical induction on $k$ yields the estimate. 

Concerning \eqref{n-etA>}, where $s_0\geq 0$, we first show that 
\begin{align}\label{mathscrU-est-by-semigroup}
\|\nabla \mathscr U(t)\|_{L^q_{s_0}(\mathbb{R}^n)} \leq C\max\big\{ t^{-\frac12}, t^{-\frac{n}{2q }-\frac{s_0}{2}}\big\} \|u_0\|_{L^{q }_{s_0}(\Omega)}, \quad t>2,  
\end{align}
In fact, by virtue of \eqref{mathscr-est-form}, 
\begin{align}
    \begin{aligned}
 \|\nabla \mathscr{U}(t)\|_{L^q_{s_0}(\mathbb{R}^n)} 
& \leq  \Bigg(\int_{0}^1 + \int_{1}^{t-1} + \int_{t-1}^t \Bigg)
 \|\nabla e^{-(t-\tau) A_0} P_0 G(\tau)\|_{L^q_{s_0}(\mathbb{R}^n)} \dtau\\ 
& =: L_4(t) +L_5(t) +L_6(t), 
\end{aligned}
\end{align}
where $L_4$ and $L_6$ are estimated by Proposition \ref{whole-est-basis} as well as Lemmata \ref{est-G2} and \ref{est-G1} with $\tilde p=q$ yielding the bounds $C t^{-\frac{1}{2}} \|u_0\|_{L^q_{s_0}(\Omega)}$ and $Ct^{-\frac{n}{2q}-\frac{s_0}{2}} \|u_0\|_{L^{q}_{s_0}(\Omega)}$, respectively. 
 The term $L_5$ is split into two integrals     $L_5=L_{5,1}+L_{5,2}$ defined on $[1, t/2]$ and $[t/2, t-1]$, respectively, and we exploit Proposition \ref{whole-est-basis} as well as Lemma \ref{est-G1} with $\tilde p\in (1,p]$.  
For $L_{5,1}$ note that $t-\tau\geq \frac{t}{2}$ on $(1,\frac{t}{2})$. 
Since $s_0\geq 0$, \BLACK 
\begin{align*}
L_{5,1}(t) & \leq c\int_{1}^{t/2} (t-\tau)^{-\frac{n}{2}\big(\frac{1}{\tilde{p}}-\frac{1}{q}\big)-\frac{1}{2}} \tau^{-\frac{n}{2q} -\frac{s_0}{2}} \dtau \,\|u_0\|_{L^{q }_{s_0}(\Omega)} \\
& \leq c
t^{-\frac{n}{2}\big(\frac{1}{\tilde{p}}-\frac{1}{q}\big)-\frac{1}{2}}\!\! \int_1^{t/2} \!\!\! \tau^{-\frac{n}{  2q }-\frac{s_0}{2}} \dtau\,\|u_0\|_{L^{q }_{s_0}(\Omega)}\\
& \leq Ct^{-\frac{n}{2}\big(\frac{1}{\tilde{p}}-\frac{1}{q}\big)-\frac{1}{2}} \big(t^{-\frac{n}{2q }+1} + \log t\big)\|u_0\|_{L^{q }_{s_0}(\Omega)};
\end{align*}
for $\tilde p$ close to $1$ this product decays faster than $t^{-\frac12}$ as $t\to\infty$.

Concerning $L_{5,2}$, we will get the decay rate $\max\big\{t^{-\frac{1}{2}}, t^{-\frac{n}{2q}-\frac{s_0}{2}}\big\}$. Indeed, we use the simple estimates  $\tau^{-\frac{n}{2q} -\frac{s_0}{2}} \leq ct^{-\frac{n}{2q} -\frac{s_0}{2}}$
and, with $\varepsilon_0 := \min\big(\frac{n}{2q},\frac12\big)$,
$$ (t-\tau)^{-\frac{n}{2}\big(\frac{1}{\tilde{p}}-\frac{1}{q}\big)-\frac{1}{2}} \leq ct^{\frac{n}{2q} - \varepsilon_0} \cdot (t-\tau)^{-\frac{n}{2\tilde{p}} - \frac12 +\varepsilon_0} \quad \textrm{ for }\tau\in(\tfrac{t}{2},t-1). $$
Since $-\frac{n}{2\tilde{p}}-\frac12+\varepsilon_0 < -1$ for $\tilde p$ close to $1$, the integral $\int_{t/2}^{t-1} (t-\tau)^{-\frac{n}{2}\big(\frac{1}{\tilde{p}}-\frac{1}{q}\big)-\frac{1}{2}} \dtau$ is bounded by $ct^{\frac{n}{2q} - \varepsilon_0}$ for all $t>2$.
Summarizing we get the upper bound
$$ L_{5,2}(t) \leq c t^{-\varepsilon_0-\frac{s_0}{2}} \|u_0\|_{L^{q }_{s_0}(\Omega)} \leq c 
t^{-\frac{n}{2q}-\frac{s_0}{2}}   \|u_0\|_{L^{q }_{s_0}(\Omega)}, \quad t>1. $$
%
%
Hence we proved \eqref{mathscrU-est-by-semigroup}. 

Since $\mathscr{U}=\mathcal U = u-v$ on $\Omega^{ R+4}$, {\em i.e.} $u=\mathscr{U}+v$ on $\Omega^{R+4}$, and $u=\mathcal U+v$ on $\Omega_{R+4}$, \eqref{mathscrU-est-by-semigroup}, \eqref{UtW1q} \BLACK and
Lemma \ref{exterior-est1} \BLACK imply for $u(t)=e^{-tA}u_0$ that 
\begin{equation}\label{nablaut}
\|\nabla u(t)\|_{L^q_{s_0}(\Omega)} \leq 
C\max\big\{t^{-\frac{1}{2}}, t^{-\frac{n}{2 q \BLACK }-\frac{s_0}{2}}\big\}\|u_0\|_{L^{q }_{s_0}(\Omega)}, \quad t>2.
\end{equation}
Therefore, replacing $u_0$ by $u(t/2)=e^{-tA/2}u_0$ and applying the semigroup property as well as \eqref{u(t)-semigroup-est} we obtain that
\begin{equation}\label{nablaut2}
\big\|\nabla  e^{-t A}u_0 \big\|_{L^q_{s_0}(\Omega)} \leq C \max\big\{t^{-\frac{1}{2}}, t^{-\frac{n}{2q} -\frac{s_0}{2} }\big\} \,\cdot\, t^{-\frac{n}{2}\big(\frac{1}{p}-\frac{1}{q}\big)-\frac{s-s_0}{2}}\, \|u_0\|_{L^{p}_{s}(\Omega)}, \quad t>2. 
\end{equation}
Now \eqref{n-etA>} is completely proved for $t>2$ and hence even for $t>1$. 

%
Finally, the estimate \eqref{etA<} for $t<2$ is obtained by the resolvent estimate \eqref{equ:rse-w2}, an extension operator to the whole space, and the moment inequality  in $L^p_s(\IR^n)$.
Indeed, the resolvent estimates in Theorem \ref{res-weighted} and Remark \ref{01t} applied to $e^{-tA}$ show that 
\begin{align}\label{0t1-est}
\|u(t)\|_{L^p_{s}(\Omega)}+ t^{1/2}\|\nabla u(t)\|_{L^p_{s}(\Omega)}
+t\|\nabla^2 u(t)\|_{L^p_{s}(\Omega)} \leq C 
\|u_0\|_{L^p_{s}(\Omega)}, \quad 0<t\leq 2,
\end{align}
where the constant $C$ is independent of $0<t<2$. 

 In the following let $E: W^{2,q}_{s}(\Omega) \cap W^{1,q}_{0,s}(\Omega) \to W^{2,q}_{s}(\IR^n) \cap W^{1,q}_{0,s}(\IR^n)$ denote an extension operator such that $Ev\big|_{\Omega} = v$ and 
\begin{equation}\label{homog-ext}
\|Ev\|_{\widehat W^{k,q}_{s}(\IR^n)} \leq c \|v\|_{\widehat W^{k,q}_{s}(\Omega)}, \BLACK\quad k=0,1,2,
\end{equation}
for all $v\in W^{2,q}_{s}(\Omega) \cap W^{1,q}_{0,s}(\Omega)$ with a constant $c>0$ independent of $v$, {\em cf.}  Lemma \ref{extention}. 
Moreover, we need properties of fractional Laplacians on $\IR^n$. 
Let $\beta=\frac{n}{2}\big(\frac1p-\frac1q\big)\in \big[0,\frac{n}{2}\big)$. 
Then the operator $(-\Delta)^{-\beta}$ equals in Fourier space a multiplier operator $c_\beta |\xi|^{-2\beta}$ with a constant $c_\beta>0$ so that by a classical result, see \cite[Theorem 2.4.6]{GrafakosI}, 
$$(-\Delta)^{-\beta} v(x) = c_\beta' (I_{2\beta}v)(x) = c_\beta' \int_{\IR^n} \frac{1}{|x-y|^{n-2\beta}} v(y)\dy,$$
where $c_\beta'>0$, {\em cf.} the proof of Proposition \ref{whole-est-basis}. Hence $(-\Delta)^{-\beta}: L^{p}_{s}(\IR^n) \to L^{q}_{s}(\IR^n)$ is a bounded, injective linear operator; actually, injectivity is a consequence of the condition $s>-\frac{n}{p}$.
Then any $v\in\mathcal D((-\Delta)^{\beta})$ satisfies the embedding estimate 
\begin{equation}\label{Deltabeta} 
\|v\|_{L^q_s(\IR^n)} \leq C\|(-\Delta)^\beta v\|_{L^p_s(\IR^n)}.
\end{equation}

Exploiting \eqref{Delta12} and \eqref{homog-ext}, \eqref{Deltabeta} we estimate $u(t)$ and $\nabla u(t)$, $0<t<2$, in \eqref{etA<} as follows: For $k=0,1$ and $\beta=\frac{n}{2}\big(\frac1p-\frac1q\big)\in \big[0,1-\frac{k}{2}\big)$ the moment inequality implies that  
\begin{align}\label{u(t)alpha}
\begin{aligned}
    \|\nabla^k u(t)\|_{L^q_{s_0}(\Omega)} &\leq \|\nabla^k Eu(t)\|_{L^q_{s}(\IR^n)}\\
    & \leq C\|(-\Delta)^{\beta+\frac{k}{2}} Eu(t)\|_{L^p_{s}(\IR^n)}\\
    & \leq C\|(-\Delta)Eu(t)\|_{L^p_{s}(\IR^n)}^{\beta+\frac{k}{2}} \,\|Eu(t)\|_{L^p_{s}(\IR^n)}^{1-\beta-\frac{k}{2}}\\
    & \leq C\| \nabla^2 \BLACK u(t)\|_{L^p_{s}(\Omega)}^{\beta+\frac{k}{2}} \, \|u(t)\|_{L^p_{s}(\Omega)}^{1-\beta-\frac{k}{2}}.
\end{aligned}
\end{align}
Summarizing \eqref{u(t)alpha} and \eqref{0t1-est}  we obtain the estimate
$$
\|\nabla^k u(t)\|_{L^q_{s_0}(\Omega)} \leq C\|(-\Delta)u(t)\|_{L^p_{s}(\Omega)}^{\beta+\frac{k}{2}}  \|u(t)\|_{L^p_{s}(\Omega)}^{1-\beta-\frac{k}{2}}
\leq Ct^{-\beta-\frac{k}{2}} \|u_0\|_{L^p_{s}(\Omega)}. 
$$
Now the proof of \eqref{etA<} is complete.
\end{proof}


\begin{section}{Proof of the main result}

Using Theorem \ref{Stokes-w}, we obtain  existence of time periodic solution of the Navier-Stokes equations. 
%
%
%
To prove Theorem \ref{existence-per_2},   we define the Poincar\'{e} map for Kato's solution,
\begin{align}\label{equ:ns-per}
	u\mapsto H[u](t) := \displaystyle\int_{-\infty}^t e^{-(t-\tau)A} \{B[u](\tau) + Pf(\tau)\} \dtau,\quad t\in\IR,
\end{align}
where $B[u](\tau)= - P (u(\tau)\cdot \nabla u(\tau))$.
\BLACK Then the periodic solution $u$ is considered as a fixed point of the nonlinear operator $H[u]$, {\em cf.} Kozono and Nakao \cite{Kozono-Nakao}. Actually, the following proposition holds:


\begin{prop}\label{periodicity}
Let  $f(x,t)$ be a $T$-periodic function. 
Then a $T$-periodic solution $u$ is represented by \eqref{equ:ns-per}. Conversely, 
if there is a unique fixed point $u$ of the operator 
$H$, {\rm i.e.,} 
$$
u=H[u], 
$$
then $u(t+T)=u(t)$. 
\end{prop}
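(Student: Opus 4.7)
The plan is to prove the two implications separately, both being soft consequences of the Duhamel formula combined with the decay estimates established in Theorem \ref{Stokes-w}.

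\textbf{Forward direction.} Suppose $u$ is a $T$-periodic mild solution in the class of Theorem \ref{existence-per_2}. For any $t_0<t$ the variation of constants formula gives
\begin{equation*}
u(t) = e^{-(t-t_0)A}u(t_0) + \int_{t_0}^{t} e^{-(t-\tau)A}\bigl\{B[u](\tau) + Pf(\tau)\bigr\}\,\d\tau.
\end{equation*}
The first step is to send $t_0\to -\infty$. Since $u$ is $T$-periodic, $u(t_0)$ remains uniformly bounded in the appropriate weighted Lebesgue norm; applying the weighted decay estimate \eqref{etA>1} from Theorem \ref{Stokes-w} (with exponents tuned so that $-\tfrac{n}{2}(\tfrac1p-\tfrac1q)-\tfrac{s-s_0}{2}<0$) yields $\|e^{-(t-t_0)A}u(t_0)\|\to 0$. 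The second step is to check absolute convergence of the improper integral $\int_{-\infty}^{t}$: using $Pf\in L^\infty_{\rm per}(\IR;L^{q_{12}}_{2s}\cap L^{q^*_{22}}_{2s})$ together with a bound on $B[u]$ in the same space, Theorem \ref{Stokes-w} supplies an integrable kernel in $\tau$. Passing to the limit then gives \eqref{equ:ns-per}.

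\textbf{Backward direction.} Let $u=H[u]$ and set $v(t):=u(t+T)$. Substituting $\tau=\tau'+T$ in the defining integral,
\begin{align*}
H[u](t+T) &= \int_{-\infty}^{t+T} e^{-(t+T-\tau)A}\bigl\{B[u](\tau)+Pf(\tau)\bigr\}\,\d\tau\\
&= \int_{-\infty}^{t} e^{-(t-\tau')A}\bigl\{B[u](\tau'+T)+Pf(\tau'+T)\bigr\}\,\d\tau'.
\end{align*}
By $T$-periodicity of $f$, $Pf(\tau'+T)=Pf(\tau')$, and by definition $B[u](\tau'+T)=-P(u(\cdot+T)\cdot\nabla u(\cdot+T))(\tau')=B[v](\tau')$. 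Therefore
\begin{equation*}
v(t)=u(t+T)=H[u](t+T)=\int_{-\infty}^{t} e^{-(t-\tau)A}\bigl\{B[v](\tau)+Pf(\tau)\bigr\}\,\d\tau = H[v](t),
\end{equation*}
so $v$ is also a fixed point of $H$. Uniqueness of the fixed point then forces $v=u$, i.e.\ $u(t+T)=u(t)$.

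\textbf{Anticipated difficulty.} The substitution argument in the backward direction is essentially bookkeeping, so the only delicate point is the forward direction, namely the vanishing of $e^{-(t-t_0)A}u(t_0)$ as $t_0\to -\infty$ and the global integrability of the Duhamel integrand on $(-\infty,t)$. Both rely crucially on the weighted $L^p$-$L^q$ decay rate $t^{-\frac{n}{2}(\frac1p-\frac1q)-\frac{s-s_0}{2}}$ of Theorem \ref{Stokes-w}, which is precisely why the radial Muckenhoupt weight $\langle x\rangle^{s}$ with $s>s_0$ is introduced: the unweighted exterior-domain Stokes decay alone is too slow to render the improper time integral convergent without a divergence structure on $f$. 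Verifying that the chosen exponents $q_1,q_2,q_{12},q_{22}^*,s$ in Theorem \ref{existence-per_2} produce strictly negative exponents in all three tail integrals is the concrete obstacle and is the content of the nonlinear estimates of Section~4.
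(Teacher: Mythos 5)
Your proof is correct and follows exactly the argument the paper tacitly assumes: the backward direction is the Kozono--Nakao time-shift trick (set $v(t)=u(t+T)$, change variables in the Duhamel integral, invoke $T$-periodicity of $f$ and uniqueness of the fixed point), and the forward direction is the passage $t_0\to -\infty$ in the finite-interval Duhamel formula using the weighted decay of Theorem \ref{Stokes-w}. The paper itself offers no proof of this proposition (it merely cites \cite{Kozono-Nakao} and proceeds directly to the contraction estimates of Proposition \ref{map H}), so your argument is a faithful reconstruction of the standard reasoning rather than a divergence from the paper's approach. One small point worth making explicit in the backward direction: the uniqueness used is uniqueness within a closed ball of the space $L^\infty_{\rm per}(\IR;L^{q_1}_{\sigma,s})\cap L^\infty_{\rm per}(\IR;\widehat W^{1,q_2}_{0,s})$, so one should observe that $v(\cdot)=u(\cdot+T)$ has the same $L^\infty$-in-time norms as $u$ and therefore lies in the same ball; since those norms are suprema over all of $\IR$, the shift invariance is immediate, and your conclusion $v=u$ is justified.
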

\vspace{1ex}

Recall the notation $q_{12}, q_{2}^*,q_{22}^*$ and $|f|_s$, see \eqref{As2}, in Theorem \ref{existence-per_2}. To show existence of fixed points, we  work first of all in spaces of type $L^\infty_{\rm per}$ with respect to time and later focus on spaces $C^0_{\rm per}$ in time. \BLACK


\begin{prop}\label{map H}
Under the assumption of Theorem \ref{existence-per_2}, it holds that 
$$ H[\cdot]: L^\infty_{\rm per}(\IR;L^{q_1}_{\sigma,s_1}(\Omega)\cap \widehat{W}^{1,q_2}_{0,s_2\BLACK}(\Omega)) \to L^\infty_{\rm per}(\IR;L^{q_1}_{\sigma,s_1}(\Omega)\cap \widehat{W}^{1,q_2}_{0,s_2\BLACK}(\Omega)), $$
	and $H$ satisfies the estimates
\begin{align}\label{contraction1}
	& \| H[u]\|_{L^\infty_{\rm per}(\IR; L^{q_1}_{s_1})} + \| \nabla H[u]\|_{L^\infty_{\rm per}(\IR; L^{q_2}_{s_2})}\\
	& \qquad\leq C\big\{ \|u\|_{L^\infty_{\rm per}(\IR; L^{q_1}_{s_1})} \|\nabla u\|_{L^\infty_{\rm per}(\IR; L^{q_2}_{s_2})} +  
\|\nabla u\|^2_{L^\infty_{\rm per}(\IR; L^{q_2}_{s_2})}	+ |f|_s \big\} \nonumber
	\end{align} 
and 
\begin{align}\label{contraction2}
\begin{aligned}
	\|H[u] & - H[v]\|_{L^\infty_{\rm per}(\IR; L^{q_1}_{s_1})} +  \|\nabla (H[u]-H[v])\|_{L^\infty_{\rm per}(\IR; L^{q_2}_{s_2})}  \\ 
	& \leq C\big\{  \|u\|_{L^\infty_{\rm per}(\IR; L^{q_1}_{s_1})} +   \|v\|_{L^\infty_{\rm per}(\IR; L^{q_1}_{s_1})} +  \|\nabla u\|_{L^\infty_{\rm per}(\IR; L^{q_2}_{s_2})} +  \|\nabla v\|_{L^\infty_{\rm per}(\IR; L^{q_2}_s)}\big\} \\
	& \quad \times \big\{ \|u-v\|_{L^\infty_{\rm per}(\IR; L^{q_1}_{s_1})} +   \| \nabla(u-v)\|_{L^\infty_{\rm per}(\IR; L^{q_2}_{s_2})}\big\}.
	\end{aligned}          
\end{align}
\end{prop}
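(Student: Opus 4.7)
The strategy is to split $H[u](t)$ into a linear part involving $Pf$ and a nonlinear part involving $B[u]$, and to bound each by inserting Theorem \ref{Stokes-w} under the integral. The first task is to control the nonlinearity in the right weighted norms. By H\"older's inequality with the weight split $\langle x\rangle^{2s}=\langle x\rangle^s\langle x\rangle^s$ and the boundedness of the Helmholtz projection on $L^{q_{12}}_{2s}(\Omega)$ (which requires $2s<n/q_{12}'$, guaranteed by $s<\frac{n}{2}(1-\tfrac{1}{q_{12}})$ in the hypothesis, together with Theorem 2.2), one gets
\begin{equation*}
\|B[u](\tau)\|_{L^{q_{12}}_{2s}(\Omega)}\leq C\,\|u(\tau)\|_{L^{q_1}_s(\Omega)}\,\|\nabla u(\tau)\|_{L^{q_2}_s(\Omega)}.
\end{equation*}
For the second norm I apply Proposition \ref{embed-weighted} (weighted Sobolev embedding $\widehat W^{1,q_2}_{0,s}\hookrightarrow L^{q_2^*}_s$) followed by H\"older and the boundedness of $P$ on $L^{q_{22}^*}_{2s}$ (via $s<\frac{n}{2}(1-\tfrac{1}{q_{22}^*})$) to obtain
\begin{equation*}
\|B[u](\tau)\|_{L^{q_{22}^*}_{2s}(\Omega)}\leq C\,\|\nabla u(\tau)\|_{L^{q_2}_s(\Omega)}^2.
\end{equation*}

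Next I apply Theorem \ref{Stokes-w} with $p\in\{q_{12},q_{22}^*\}$, weight parameter $2s$ on the input, and $s_0=s$ on the output. For $\|H[u](t)\|_{L^{q_1}_s}$ I use throughout $p=q_{12}$, so that $\frac{n}{2}(\frac{1}{p}-\frac{1}{q_1})=\frac{n}{2q_2}$. Estimate \eqref{etA<} for $|\alpha|=0$ is applicable because $q_2>\tfrac{n}{2}$, and \eqref{etA>1} gives the large-time decay $(t-\tau)^{-n/(2q_2)-s/2}$. Writing
\begin{equation*}
\|H[u](t)\|_{L^{q_1}_s}\leq C\Bigl(\int_{-\infty}^{t-1}+\int_{t-1}^t\Bigr)\|e^{-(t-\tau)A}\{B[u](\tau)+Pf(\tau)\}\|_{L^{q_1}_s}\,d\tau,
\end{equation*}
the first integral converges thanks to $\tfrac{n}{q_2}+s>2$, which is precisely \eqref{As1}; the second converges since $\tfrac{n}{2q_2}<1$. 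For the gradient estimate the crucial point is that the small-time bound via $p=q_{12}$ would give exponent $\tfrac{n}{2q_1}+\tfrac12\geq 1$ because $q_1<n$, which is not integrable. I therefore use $p=q_{22}^*$ for the near-diagonal piece $\tau\in(t-1,t)$, giving an integrable singularity of order $(t-\tau)^{-n/(2q_2)}$ since $q_2>\tfrac{n}{2}$. For $\tau<t-1$ either choice of $p$ is integrable; using $p=q_{22}^*$ again the $\|\cdot\|_{L^{q_2}_s}$ decay rates $(t-\tau)^{-\frac{n}{2q_2}-\frac{s}{2}}$ and $(t-\tau)^{-\frac{n}{2q_{22}^*}-s}$ are both beyond $t^{-1}$ by \eqref{As1}. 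Taking the supremum over $\tau$ of the weighted norms of $B[u](\tau)+Pf(\tau)$ yields the quadratic bound in \eqref{contraction1} plus $|f|_s$.

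The difference estimate \eqref{contraction2} follows by the bilinear splitting
\begin{equation*}
B[u]-B[v]=-P\bigl((u-v)\cdot\nabla u+v\cdot\nabla(u-v)\bigr),
\end{equation*}
estimating each summand exactly as in Step 1 and applying the same semigroup bounds. Periodicity of $H[u]$ is immediate from the change of variable $\tau\mapsto\tau+T$ once $B[u]+Pf$ is periodic; the solenoidality and vanishing trace of $H[u](t)$ follow from $e^{-\sigma A}$ mapping into $L^{q_1}_{\sigma,s}\cap W^{1,q_2}_{0,s}$ together with Minkowski's integral inequality.

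\textbf{Main obstacle.} The delicate point is the gradient estimate for small $t-\tau$: because $q_1<n$, the estimate based on $L^{q_{12}}_{2s}$ alone fails to be locally integrable at $\tau=t$, which is exactly why the hypothesis requires $Pf$ and hence $B[u]$ in the second norm $L^{q_{22}^*}_{2s}$. Simultaneously, convergence at $\tau\to-\infty$ forces the assumption \eqref{As1}, and matching both regimes with a single semigroup exponent requires carefully choosing $p=q_{12}$ or $p=q_{22}^*$ in each of the four time/derivative regimes. All other steps are routine once the correct exponent bookkeeping is in place.
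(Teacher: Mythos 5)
Your proof is correct and follows essentially the same approach as the paper: split the integral into a tail and a near-diagonal piece, bound the bilinear term via weighted H\"older estimates in $L^{q_{12}}_{2s}$ and $L^{q^*_{22}}_{2s}$ (the latter via Proposition \ref{embed-weighted}), and apply Theorem \ref{Stokes-w} with source weight $2s$ and target weight $s$, using \eqref{As1} for convergence as $\tau\to-\infty$ and the condition $q_2>n/2$ near $\tau=t$. The only cosmetic differences are that you split at $t-1$ instead of $t-2$ and, for the tail of the gradient estimate, you use $p=q^*_{22}$ rather than the paper's $p=q_{12}$; both choices yield integrable exponents under the stated assumptions.
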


\vspace{1ex}

\begin{proof}
We set 
\begin{align*}
		\begin{aligned}
H[u] & = \displaystyle\int_{-\infty}^{t-2} e^{-(t-\tau)A} B[u](\tau) \dtau  +\displaystyle\int_{t-2}^{t} e^{-(t-\tau)A} B[u](\tau) \dtau
+\displaystyle\int_{-\infty}^t e^{-(t-\tau)A} Pf(\tau) \dtau \\[1ex]
& =:I_1(t) +I_2(t) +I_3(t). 
\end{aligned}          
	\end{align*}
Concerning $I_1$, Theorem \ref{Stokes-w} and H\"older's inequality $\|u\cdot\nabla u\|_{L^{q_{12}}_{s_1+s_2}} \leq \|u\|_{L^{q_1}_{s_1}} \|\nabla u\|_{L^{q_2}_{s_2}}$ with weights and $q_{12} = \frac{q_1q_2}{q_1+q_2}$ imply \BLACK that  
\begin{align*}
	\begin{aligned} 	
\|I_{1}(t)\|_{L^{q_1}_{s_1}} & \leq C \|u\|_{L^\infty_{\rm per}(\IR; L^{ q_1}_{s_1})} \| \nabla  u\|_{L^\infty_{\rm per}(\IR; L^{q_2}_{s_2})} \; \int_{-\infty}^{t-2}  (t-\tau)^{-\frac{n}{2q_2}-\frac{s_2}{2}}
 \dtau  \\
& \leq C \|u\|_{L^\infty_{\rm per}(\IR; L^{ q_1}_{s_1})} \| \nabla  u\|_{L^\infty_{\rm per} (\mathbb{R};L^{q_2}_{s_2})}. 
	\end{aligned}
\end{align*}
Note that the above improper integral is uniformly bounded in $t$ since $\frac{n}{q_2} +s_2 > 2$ by assumption \eqref{As11}. To apply the Helmholtz projection $P$ we also need that 
\begin{align}\label{conditionq1-q2}
s_1 + s_2 < n\Big(1-\frac{1}{q_1}-\frac{1}{q_2}\Big) = \frac{n}{q_{12}'}. 
\end{align}
Thus we have to take $q_1$ and $q_2$ satisfying  
$2- s_2 < \frac{n}{q_2} < n-\frac{n}{q_1}-(s_1+s_2)$,
{\em i.e., }
\begin{align}\label{conditions_1-q1}
\frac{n}{q_1}+s_1 < n-2,
\end{align}
{\em cf.} \eqref{As1}. 

\vspace{2ex}

\vspace{2ex}


From Theorem \ref{Stokes-w} we get that 
\begin{align*}
	\begin{aligned} 	
\|\nabla I_{1}(t)\|_{L^{q_2}_{s_2}} & \leq C \|u\|_{L^\infty_{\rm per}(\IR; L^{ q_1}_{s_1})} \| \nabla  u\|_{L^\infty_{\rm per}(\IR; L^{q_2}_s)}\\
& \quad \times \int_{-\infty}^{t-2}  \big\{ (t-\tau)^{-\frac{n}{2q_1}-\frac{1}{2} -\frac{s_1}{2}} + (t-\tau)^{-\frac{n}{2q_{12}} -{(s_1+s_2)}\,
}\BLACK\big\}
 \dtau  \\
& \leq C \|u\|_{L^\infty_{\rm per}(\IR; L^{ q_1}_{s_1})} \|\nabla  u\|_{L^\infty_{\rm per}(\IR; L^{ q_2}_{s_2})}. 
	\end{aligned}
\end{align*}
Here we have to satisfy the condition $0 \leq s_1 +s_2 < n\big(1-\frac{1}{q_{12}}\big)$, see \eqref{conditionq1-q2},
to apply the projection $P$. Moreover, to get integrability, we require $\frac{n}{q_1}+s_1>1$ as in \eqref{As1} and recall that $q_1<n$, $q_2<n$. 
\BLACK 


We recall the fractional Hardy inequality on weighted spaces; 
\BLACK

As for $I_2(t)$, we note that 
$$
\frac{n}{2q_2} =\frac{n}{2 q^*_{2}}+\frac{1}{2} < 1. 
$$
Hence \eqref{etA<} of Theorem \ref{Stokes-w} and a Sobolev embedding, see Proposition \ref{embed-weighted}, imply that 
\begin{align}
	\begin{aligned}\label{weak-singularity} 	
\|I_{2}\|_{L^{q_1}_{s_1}} + \|\nabla I_{2}\|_{L^{q_2}_{s_2}} 
& \leq C \big(\|u\|_{L^\infty_{\rm per}(\IR; L^{ q_1}_{s_1})} \|\nabla u\|_{L^\infty_{\rm per}(\IR; L^{q_2}_{s_2})}
+ \|u\|_{L^\infty_{\rm per}(\IR; L^{q^*_2}_{s_2})} \|\nabla u\|_{L^\infty_{\rm per}(\IR; L^{q_2}_{s_2})}\big)
\\
& \quad \times \int_{t-2}^{t}
\big\{ (t-\tau)^{-\frac{n}{2q_2}}+
(t-\tau)^{-\frac{n}{2q^*_{2}}-\frac{1}{2}} \big\} \dtau  \\
& \leq C \big(\|u\|_{L^\infty_{\rm per}(\IR; L^{ q_1}_{s_1})} \| \nabla  u\|_{L^\infty_{\rm per}(\IR; L^{q_2}_{s_2})} + \|\nabla u\|_{L^\infty_{\rm per}(\IR; L^{q_2}_{s_2})}^2\big). 
	\end{aligned}
\end{align}
Integrability is guaranteed, but to apply the Helmholtz projection we have to satisfy the conditions $0\leq s_1 +s_2 < n\Big(1-\frac{1}{q_{12}}\Big)$, see \eqref{conditionq1-q2}, and for $\nabla I_2$ with the two exponents $0,s_2$ rather than $s_2,s_2$
\begin{align*} 
s_2 < n\Big(1-\frac1{q_{22}^*}\Big).
\end{align*}
This condition is equivalent to $\frac{n}{q_2} +\frac{s_2}{2}<\frac{n+1}{2}$, {\em cf.} \eqref{As11}. \end{proof}

\BLACK 

\BLACK 
\BLACK

\vspace{1ex}


 
Now Proposition \ref{map H} and Banach's fixed point theorem in a sufficiently small closed ball in $L^\infty_{\rm per}(\IR; L^{q_1}_{\sigma,s_1}(\Omega)) \cap L^\infty_{\rm per}(\IR; \widehat W^{1,q_2}_{0,s_2}(\Omega))$ yield the existence of a $T$-periodic mild solution $u$  of \eqref{equ:ns} in Theorem \ref{existence-per_2}. 
Note that the estimates in the above proof and the well-definedness of $H[u](t)$ hold for all $t\in\IR$ and not only for almost all $t$.  This remark is helpful in some of the following arguments.

Next we prove that 
$u=H[u] \in C_{\rm per}(\IR;L^{q_1}_{s_1}(\Omega))$ and even  $H[u] \in C_{\rm per}(\IR;L^{q}_{\alpha\;\BLACK}(\Omega))$ for all $q_1\leq q<q_2^*$ and some positive  \BLACK $\alpha$.  
\BLACK 
\vspace{2ex}

\noindent
{\bf Proof of continuity of $H[u](\cdot)$ in $L^{q}_{s_q}(\Omega)$ for each $q_1\leq q<q_2^*$:}\;  
Consider $H(t):=H[u](t)$ as a function of time $t$ for fixed $u$, and let $h(\tau):= B[u](\tau) + P f(\tau)$. Suppose without loss of generality that $B[u]$ and $P f(\cdot) $ satisfy pointwise bounds for all $\tau$ rather than $\tau$-a.e. 
For $t'>t$ we get that
\begin{align*}
	\begin{aligned}
H(t')-H(t) & = \int_{-\infty}^t \big(e^{-(t'-\tau)A} - e^{-(t-\tau)A}\big) h(\tau)\dtau   + \int_t^{t'} e^{-(t'-\tau) A}\, h(\tau) \dtau\\
& =: (J_1 + J_2)(t',t).
\end{aligned}
\end{align*}
Concerning $J_2$ we see from the above estimates that the norm of the integrand satisfies the estimate 
$$ \big\|e^{-(t'-\tau) A} h(\tau)\big\|_{L^{q_1}_{s_1}} \leq C  \begin{cases}
(t'-\tau)^{-\frac{n}{2q_2}-\frac{s_2}{2}} & \mbox{for} \ \ t'-\tau>1,\\[1ex]
(t'-\tau)^{-\frac{n}{2q_2}} & \mbox{for} \ \ t'-\tau \leq 1,\\
\end{cases} $$ 
where $C$ depends on the norms of $u$ and $f$. Since by assumptions $\frac{n}{2q_2}<1$ and $\frac{n}{2q_2}+\frac{s_2}{2}>1$, we get that $J_2$ tends to $0$ as $0<t'-t\to 0$. 

As for $J_1$, 
\begin{align*}
\|J_1\|_{L^{q_1}_{s_1}} \leq \int_{-\infty}^t \big\|(e^{-(t'-t)A}-I) \big[e^{-(t-\tau)A} h(\tau)\big]\big\|_{L^{q_1}_{s_1}} \dtau
\end{align*}
where $\big\|e^{-(t-\tau)A} h(\tau)\big\|_{L^{q_1}_{s_1}}$ is integrable with respect to $\tau \in(-\infty,t)$  by the above arguments for $J_2$. Then due to the strong continuity of the analytic Stokes semigroup $e^{-tA}$ on $L^{q_1}_{s_1}(\Omega)$ and its global-in-time boundedness Lebesgue's theorem on dominated convergence shows that $J_1$ tends to $0$ as $t'\to t+$.

For the limit $t\to t'-$ we estimate the term $J_2(t)$ as above. Moreover, we write $J_1(t)$ in the form 
$$ J_1(t) = \int_{-\infty}^{t-\eta}  \big(e^{-(t'-\tau)A} - e^{-(t-\tau)A}\big) h(\tau)\dtau + \int_{t-\eta}^{t} \big(e^{-(t'-\tau)A} - e^{-(t-\tau)A}\big) h(\tau)\dtau. $$
Given any $\varepsilon >0$ there exist by arguments as for $J_2(t)$ an $\eta>0$ such that the second integral on the right hand side is bounded in $L^{q_1}_{s_1}(\Omega)$ by $\varepsilon$ uniformly in $t\in (t'-\eta,t')$. The integrand of the first integral is rewritten as
$\big(e^{-\eta A} - e^{-(\eta-(t'-t))A}\big) e^{-(t'-\eta-\tau)A}\,h(\tau)$.
Then we apply Lebesgue's convergence theorem and the strong continuity of the Stokes semigroup at time $\eta$ to
$$ \int_{-\infty}^{t'-\eta} \big\|\big(e^{-\eta A} - e^{-(\eta-(t'-t))A}\big) e^{-(t'-\eta-\tau)A}\,h(\tau)\big\|_{L^{q_1}_{s_1}(\Omega)} \dtau $$
and let $t$ pass to $t'-$. This proves that $H[u] \in C_{\rm per}(\IR;L^{q_1}_{s_1}(\Omega))$.

We note that  $H(\cdot)$ is uniformly bounded in $\widehat W^{1,q_2}_{ 0,s_2}$ and thus also in $L^{q_2^*}_{s_2}$ by  Proposition \ref{embed-weighted}. Hence the interpolation estimate  
\begin{align*}
	\begin{aligned}
\|H(t')-H(t)\|_{L^q_{s_q}} \leq \|H(t')-H(t)\|_{L^{q_1}_{s_1}}^{1-\kappa} \|H(t')-H(t)\|_{L^{q_2^*}_{s_2}}^\kappa,\quad  q_1<q<q_2^*, 
\end{aligned}
\end{align*}
where $\kappa =\kappa(q,q_1,q_2^*) \in(0,1)$ is defined by $\frac{1}{q} = \frac{1-\kappa}{q_1} + \frac{\kappa}{q_2^*}$ and $s_q=s_1(1-\kappa)+s_2\kappa$, implies that  $H[u] \in C_{\rm per}(\IR;L^q_{\sigma,s_q}(\Omega))$ for each $q_1\leq q<q_2^*$. 
Therefore, we can consider $H[\cdot]$ also as a map from $C_{\rm per}(\IR; L^{q_1}_{\sigma,s_1}) \cap L^\infty_{\rm per}(\IR; \widehat W^{1,q_2}_{0,s_2})$ into itself, and Banach's fixed point theorem implies that the map $H[\cdot]$ admits a unique fixed point $u$ even in an adequate closed ball in $C_{\rm per}(\IR;L^{q_1}_{\sigma,s_1}(\Omega))\cap L^\infty_{\rm per}(\IR;\widehat W^{1,q_2}_{0,s_2}(\Omega))$.
Since by the above arguments, $u \in C_{\rm per}(\IR;L^q_{s_q}(\Omega))$ for each $q_1\leq q<q_2^*$, and $q_1<n<q_2^*$ because of the fact that  $q_2>\frac{n}{2}$, we have that $u \in C_{\rm per}(\IR;L^n_{s_n}(\Omega))$ where $s_n\in (s_1,s_2)$  with $s_n \geq 0$ for either $s_1\geq 0$ or $s_1<0$ but having small modulus. \BLACK 

\BLACK

\BLACK

\vspace{1ex}

\noindent
{\bf Proof of continuity of $\nabla H[u](\cdot)$  provided $q_1\leq q_2$ and $\frac{1}{q_1}-\frac{1}{q_2} < \frac1n$:}  
First we prove pointwise estimates for $h(\tau)= B[u](\tau) + P(\tau)f(\tau)$. In view of the above estimates for $I_1(t)$, $I_2(t)$ (and $I_3(t)$) we obtain that
\begin{align*}
\| e^{-(t-\tau)A} h(\tau)\|_{L^{q_1}_{s_1}} \leq Cj_1(t,\tau),
\end{align*}
where 
\begin{equation*}
j_1(t,\tau)=
\begin{cases} 
 (t-\tau)^{-\frac{n}{2q_2}-\frac{s_2}{2}}  
 \;\mbox{for} \ \ t-\tau>1,\\[1ex] 
(t-\tau)^{-\frac{n}{2q_2}}\;\quad\mbox{for} \ \ t-\tau\leq 1,
\end{cases} 
\end{equation*}
and $C$ depends on the norms of $u$, $f$. Moreover, by the estimates of $\nabla I_j(t)$, $j=1,2,3$, above there holds for $t>\tau$
$$
\|\nabla e^{-(t-\tau)A} h(\tau)\|_{L^{q_2}_{s_2}} \leq C j_2(t,\tau),
$$
$$ j_2(t,\tau) =  
\begin{cases} 
(t-\tau)^{-\frac{n}{2q_1}-\frac{1}{2} -\frac{s_1}{2}} + (t-\tau)^{-\frac{n}{2q_{12}} -\frac{s_1+s_2}{2}} \BLACK \;\mbox{for} \ \ t-\tau>1,\\[1ex]
(t-\tau)^{-\frac{n}{2q^*_{2}}-\frac{1}{2}} 
\hspace{35mm} \mbox{for} \ \ t-\tau \leq 1.
\end{cases}
$$
In particular, both $j_1$ and $j_2$ are integrable in $\tau\in(-\infty,t')$.
If $t'>t$, then
$$ \nabla H(t')-\nabla H(t) = \int_{-\infty}^t \big(\nabla e^{-(t'-\tau)A}-\nabla e^{-(t-\tau)A}\big) h(\tau)\ \dtau + \int_t^{t'} \nabla e^{-(t'-\tau)A}  h(\tau)\dtau, $$
where the second integral converges to 0 as $t'\to t+$ due to the properties of $j_2$. 

In the first integral let $0<\eta<1$ and write 
\begin{align}\label{int-nablaU-nablaU}
	\begin{aligned}	
\Big\|\int_{-\infty}^t & \big(\nabla e^{-(t'-\tau)A}-\nabla e^{-(t-\tau)A}\big) h(\tau)\dtau\Big\|_{L^{q_2}_{s_2}} \\
& \leq 
\int_{-\infty}^{t-\eta} \big\|\nabla e^{-\eta A} (e^{-(t'-t)A}-I) \;e^{-(t-\eta-\tau)A} h(\tau)\big\|_{L^{q_2}_{s_2}}\dtau \\
& \quad + \int_{t-\eta}^t \big(\big\|\nabla e^{-(t'-\tau)A} h(\tau)\big\|_{L^{q_2}_{s_2}} + \big\|\nabla e^{-(t-\tau)A} h(\tau)\big\|_{L^{q_2}_s}\big) \dtau. \\
	\end{aligned}
\end{align}
Given any $\epsilon>0$ we fix $\eta>0$ such that due to the properties of $j_2$ the third integral in \eqref{int-nablaU-nablaU} is bounded by $\epsilon$. With this $\eta$ we consider the second integral in \eqref{int-nablaU-nablaU}. The term $e^{-(t-\eta-\tau)A} h(\tau)$ is integrable in $L^{q_1}_{s_1}$ due to $j_1$. Hence in order to apply Lebesgue's theorem of dominated convergence it suffices to show that $\nabla e^{-\eta A} (e^{-(t'-t)A}-I)$ is uniformly bounded with respect to time in ${\mathcal L(L^{q_1}_{\sigma,s_1},L^{q_2}_{s_2})}$ and strongly converges to 0 as $t'\to t$.
Indeed, by \eqref{etA<} in Theorem \ref{Stokes-w} there holds 
\begin{equation}\label{etaA}
\|\nabla e^{-\eta A} \|_{\mathcal L(L^{q_1}_{\sigma,s_1},L^{q_2}_{s_2})} \leq c\eta^{-\frac{n}{2}\big(\frac{1}{q_1}-\frac{1}{q_2}\big)-\frac12},
\end{equation}
provided that $q_1\leq q_2$ and $ \frac{1}{q_1}-\frac{1}{q_2} <\frac1n$. 
Moreover, $e^{-(t'-t)A}-I$ converges strongly on $L^{q_1}_{s,\sigma}(\Omega)$ to $0$ as $t'\to t+$ due to classical semigroup properties of $e^{-\tau A}$.

To get the same result when $t'\to t-$  we write
\begin{align*} 
\nabla H & (t) - \nabla H(t') = \int_{t'}^t \nabla e^{-(t-\tau)A} h(\tau)\dtau\\
& + \int_{-\infty}^{t'-\eta} \big(\nabla e^{-(t-\tau)A}-\nabla e^{-(t'-\tau)A}\big) h(\tau)\dtau + \int_{t'-\eta}^{t'} \big(\nabla e^{-(t-\tau)A} -\nabla e^{-(t'-\tau)A}\big) h(\tau)\dtau .
\end{align*}
The first integral is easily shown to converge  in $L^{q_2}_{s_2}$ \BLACK to $0$ as $t'\to t-$. As for the third one, we see as above that this integral is bounded by a prescribed $\epsilon>0$ uniformly in $t'\in (t-\eta,t)$, choosing $\eta>0$ sufficiently small. Now it suffices to consider the second integral with this $\eta$; its integrand will be written in the form 
$$
\nabla e^{-\eta A/2} \big\{e^{-\eta A/2}-  e^{-(t'-t+\eta/2)A}\big\}  e^{-(t-\eta-\tau)A}h(\tau),\quad \tau\in (-\infty,t'-\eta),\;  t'\in(t-\eta,t). 
$$ 
By the above estimate $\big\| e^{-(t-\eta-\tau)A} h(\tau)\big\|_{L^{q_1}_{s_1}} \leq Cj_1(t-\eta,\tau)$  with $j_1(t-\eta, \cdot)\in L^1(-\infty,t-\eta)$, the bound  $\big\|\nabla e^{-\eta A/2}\big\|_{\mathcal L(L^{q_1}_{\sigma,s_1},L^{q_2}_{s_2})} \leq C_\eta$ as in \eqref{etaA}, and the strong convergence $e^{-\eta A/2}-  e^{-(t'-t+\eta/2)A}$ to $0$ in $L^{q_1}_{\sigma,s_1}(\Omega)$ as $t'\to t-$ with $t'\in (t-\eta/4,t)$, we conclude that Lebesgue's theorem can be applied.  In summary, we proved that $\nabla H(t') \to \nabla H(t)$ in $L^{q_2}_{s_2}$ for $t'\to t-$ as well.

Now Theorem \ref{existence-per_2} is proved.
\hfill\qed


\BLACK

\BLACK

%

\end{section}

\vspace{2ex}

\noindent {\bf Acknowledgements.}
The second author is supported by JSPS grant
no. 22K13946.\vspace{3mm}  
 
 \noindent {\bf Conflicts of interest statement.}  There is no conflict of interest. \vspace{1mm} 

\noindent {\bf Data Availability statement.} No datasets were generated or analysed during the current study.

\vspace{1pt}

\end{document}